\numberwithin{equation}{section}
\theoremstyle{plain} 
\newtheorem{theorem}{Theorem}[section]
\newtheorem*{theorem*}{Theorem}
\newtheorem{proposition}[theorem]{Proposition}
\newtheorem{lemma}[theorem]{Lemma}
\newtheorem{corollary}[theorem]{Corollary}
\newtheorem{claim}{Claim}
\newtheorem{remark}[theorem]{Remark}
\newtheorem{definition}[theorem]{Definition}
\newenvironment{customthm}[1]{\innercustomthm}{\endinnercustomthm}
\def\Rnp{\mathbb{R}^N_+}
\def\Rn{\mathbb{R}^N}
\def\Bn{\mathbb{B}^N}
\def\r{\mathfrak{r}}
\title[Hardy-Sobolev inequalities and their best constants]{Attainability of the best constant of Hardy-Sobolev inequality with full boundary singularities}
\author{Liming Sun}
\address{Academy of Mathematics and Systems Sciences, Chinese Academy of Sciences, Beijing, 100190, China.}
\email{lmsun@amss.ac.cn}
\author{Lei Wang}
\address{School of Mathematics, Shandong University, Jinan, 250100, China.}
\email{wangleiwl@sdu.edu.cn}
 \date{\today \,(Last Typeset)}
 \subjclass[2020]{35A23 (Primary), 35B09, 35B44, 35J75, 35J91 (Secondary).}
 \keywords{Hardy-Sobolev inequality, distance function, best constant, least-energy solution.}
\begin{document}
\begin{abstract}
   We consider a type of Hardy-Sobolev inequality, whose weight function is singular on the whole domain boundary. We are concerned with the attainability of the best constant of such inequality. In dimension two, we link the inequality to a conformally invariant one using the conformal radius of the domain. The best constant of such inequality on a smooth bounded domain is achieved if and only if the domain is non-convex. In higher dimensions, the best constant is achieved if the domain has negative mean curvature somewhere. If the mean curvature vanishes but is non-umbilic somewhere, we also establish the attainability for some special cases. In the other direction, we also show the best constant is not achieved if the domain is sufficiently close to a ball in  $C^2$ sense.
\end{abstract}
\maketitle

\section{Introduction}
\subsection{Statement of main results}When $N\geq 3$, the well-known Hardy's inequality states that 
\begin{align}\label{Hardy-|x|}
    \left(\frac{N-2}{2}\right)^2\int_{\Rn}\frac{|u(x)|^2}{|x|^2}dx\leq \int_{\Rn}|\nabla u|^2dx,\quad \forall \,u\in C_0^\infty(\Rn).
\end{align}
The constant $(N-2)^2/4$ is optimal and can not be attained by any non-trivial function in $\mathcal{D}^{1,2}(\Rn)$, the completion of $C_0^\infty(\Rn)$ under the norm $(\int_{\Rn}|\nabla u|^2dx)^{\frac{1}{2}}$. For any Lipschitz domain $\Omega\subset \Rn$ with $N\geq 3$, we have the Sobolev inequality
\begin{align}\label{Sobolev}
\mu^S(\Omega)\left(\int_{\Omega}u^{\frac{2N}{N-2}}\right)^{\frac{N-2}{N}}\leq \int_{\Omega}|\nabla u|^2,\quad \forall\, u\in C^\infty_0(\Omega).
\end{align}
It is well-known that the best constant $\mu^S(\Omega)=\mu^S(\Rn)$ for any domain $\Omega$ and $\mu^S(\Omega)$ is never attained unless $\Omega=\Rn$.



H\"older-interpolating Hardy's inequality and Sobolev inequality, one can obtain the so-called \textit{Hardy-Sobolev} type inequality. For instance, using \eqref{Hardy-|x|} and \eqref{Sobolev}, one can establish the following inequality for any Lischitz domain $\Omega\subset \Rn$ with $N\geq 3$.
\begin{align}\label{HS-|x|}
     C\left(\int_{\Omega}\frac{|u|^{2^*(s)}}{|x|^s}\right)^{2/2^*(s)}\leq \int_{\Omega }|\nabla u|^2,\quad \forall\, u\in C^\infty_0(\Omega)
\end{align}
where $2^*(s)=\frac{2(N-s)}{N-2}$ and $s\in (0,2)$. When $\Omega=\Rn$, the above one is a particular case of Caffarelli–Kohn–Nirenberg inequality (cf. \cite{kohn1984first}), which has received considerable attention.

When $0$ is in the interior of $\Omega$, it is easy to show the best constant in \eqref{HS-|x|} is independent of $\Omega$ and not achieved unless $\Omega=\Rn$, see, for instance, \citet{ghoussoub2000multiple}, \citet{lieb1983sharp}, \citet{catrina2001caffarelli}. New phenomena happen when $0$ is on the boundary of $\Omega$. \citet{egnell1992positive} shows that the best constant for any cone with vertex at $0$ can be achieved. If the mean curvature of $\partial \Omega$ at the origin is negative then \citet{ghoussoub2006effect} show that the best constant can also be achieved (see also \citet{chern2010minimizers}). On the other hand, if $\Omega$ is star-shaped around $0$, then \citet{ghoussoub2004hardy} show that it is not attained. There are many interesting works related to \eqref{HS-|x|} which are impossible to enumerate. One can consult the survey \cite{ghoussoub2016sobolev} and references therein.

For domains with boundaries, there is also a version of Hardy's inequality formulated in terms of the distance function from points in the domain to the boundary. To be more precise, let $\Omega$ be a domain in $\Rn$ ($N\geq 2$) with non-empty boundary. Consider the following variational problem
\begin{align}\label{def:muH}
    \mu^H(\Omega)=\inf\left\{\int_{\Omega}|\nabla u|^2: u\in C^\infty_0(\Omega) \text{ and } \int_{\Omega}\left(\frac{u}{\delta_\Omega}\right)^2=1\right\}
\end{align}
where $\delta_\Omega(x)=\text{dist}(x,\partial \Omega)$. The infimum being positive is equivalent to the validity of the following Hardy's inequality
\begin{align}\label{Hardy}
\mu^H(\Omega)\int_{\Omega}\left(\frac{u}{\delta_\Omega}\right)^2\leq \int_{\Omega}|\nabla u|^2,\quad \forall\,u\in C^\infty_0(\Omega).
\end{align}

The infimum of \eqref{def:muH} is the so-called \textit{Hardy constant} of $\Omega$. It is known that $\mu^H(\Omega)>0$ for a large class of domains, including bounded Lipschitz ones, for instance, see \citet{OK1990Hardy}. If $\partial \Omega$ has a tangent plane at least at one point, then $\mu^H(\Omega)\leq 1/4$, see \citet{marcus1998best}. If $\Omega$ is convex, then \cite{marcus1998best}  and Matskewich et al. \cite{MS1997best} also show that $\mu^H(\Omega)=1/4$. This is established for weakly mean convex $C^2$ domains independently by \cite{psaradakis20131,lewis2012geometric,giga2013representation}. For the attainability of $\mu^H(\Omega)$, it was proved in \cite{marcus1998best} that, for bounded $C^2$ domains, the infimum in \eqref{def:muH} is achieved if and only if $\mu^H(\Omega)<1/4$. There are works on Hardy's inequality and its improvement which are impossible to list all, for instance, see \cite{brezis1997hardy,filippas2007critical,tertikas2007existence,benguria2007sharp,ghoussoub2011bessel,lam2020geometric,LP2019,DPD23On} and references therein. 

 H\"older-interpolating \eqref{Hardy} and \eqref{Sobolev} gives the following type of Hardy-Sobolev inequality. 
 For Lipschitz domains $\Omega\subset\Rnp$ with $N\geq 3$, one has 
\begin{align}\label{HS-3}
    C\left(\int_{\Omega}\frac{|u|^{2^*(s)}}{\delta_\Omega^s}\right)^{2/2^*(s)}\leq \int_{\Omega }|\nabla u|^2,\quad \forall\, u\in C^\infty_0(\Omega).
\end{align}
 There appears to be limited research conducted on it.
 It seems the above one was first studied by \citet{chen2007hardy}. The second-named author and Zhu also consider the above inequality in \cite{wang2022hardy}. Moreover, they also study an inequality for $N=2$ on a Lipschitz domain $\Omega\subset \mathbb{R}^2$,
\begin{align}\label{HS-2}
    C\left(\int_{\Omega}\frac{|u|^{p}}{\delta_\Omega^2}\right)^{2/p}\leq \int_{\Omega }|\nabla u|^2,\quad \forall\, u\in C^\infty_0(\Omega)
\end{align}
where $p\in (2,\infty)$. Although inequality \eqref{HS-2} does not arise directly from H\"older-interpolating Hardy and Sobolev ones, it is reminiscent of \eqref{HS-3} in $N=2$. We shall call both of them Hardy-Sobolev inequalities with full boundary singularities.

The best constants in \eqref{HS-3} and \eqref{HS-2} are\footnote{In $N\geq 3$, it seems that $\mu_s(\Omega)$ is more appropriate than $\mu_p(\Omega)$. This is the notation in \cite{chen2007hardy,ghoussoub2004hardy}. However, to include the case $N=2$, we have to use $\mu_p(\Omega)$.}
\begin{align}\label{def:muHS}
    \mu_p(\Omega)=\inf\left\{\int_{\Omega}|\nabla u|^2: u\in C^\infty_0(\Omega) \text{ and } \int_{\Omega}\frac{|u|^{p}}{\delta_\Omega^s}=1\right\}.
\end{align}
Here we make the following assumption on $p$ and $s$ throughout this paper (unless specified),
\begin{align}\label{assum-pnu}
\begin{cases}
s\in (0,2), \quad p=2^*(s)=\frac{2(N-s)}{N-2}, & \text{if }N\geq 3,\\
s=2,\quad\quad \ p\in (2,\infty),&\text{if }N=2.
\end{cases}
\end{align}
The positivity of the infimum is equivalent to the validity of \eqref{HS-3} and \eqref{HS-2} with $C=\mu_p(\Omega)$. Note that when $p=\frac{2N}{N-2}$ for $N\geq 3$ (that is $s=0$), \eqref{HS-3} reduces to Sobolev inequality \eqref{Sobolev} and $\mu_{\frac{2N}{N-2}}(\Omega)=\mu^S(\Omega)$. When $p=2$, \eqref{HS-3} and \eqref{HS-2} reduce to Hardy's inequality \eqref{Hardy} and $\mu_2(\Omega)=\mu^H(\Omega)$. Moreover, for $\Omega=\mathbb{R}^N_+$, both $\mu^S(\mathbb{R}^N_+)$ and $\mu^H(\mathbb{R}^N_+)$ are not attainable. 

As far as we know, for $p$ and $s$ satisfy \eqref{assum-pnu}, there are not many studies on this type of Hardy-Sobolev inequality unless the works \cite{chen2007hardy,wang2022hardy} mentioned above. We summarize the results of \cite{wang2022hardy} in the following (some of them are already obtained in \cite{chen2007hardy} for $N\geq 3$).

\begin{customthm}{A}\label{thm:WangZhu2}
Assume that $N\geq 2$, $p$ and $s$ satisfy \eqref{assum-pnu}. 
\begin{enumerate}
    \item $\mu_p(\Rnp)>0$ and it is attainable.
    \item If $\partial \Omega$ possesses a tangent plane at least at one point, then $\mu_p(\Omega)\leq \mu_p(\Rnp)$. If $\Omega$ is a bounded domain with Lipschitz boundary, then $\mu_p(\Omega)>0$.
    \item Let $\Omega$ be a bounded domain with $C^1$ boundary.  If $\mu_p(\Omega)<\mu_p(\Rnp)$, then $\mu_p(\Omega)$ is achieved by some $u\in H^1_0(\Omega)$.
    \item For any ball $B\subset \Rn$, $\mu_p(B)=\mu_p(\Rnp)$ and it is not attained by any $u\in H^1_0(B)$.
\end{enumerate}
\end{customthm}
Note that part (1) already shows that $\mu_p(\Rnp)$ has distinct behavior when $p$ are in the intermediate values and endpoints of $[2,\frac{2N}{N-2}]$. In part (1), the attainability of $\mu_p(\Rnp)$ and its extremals actually use authors' previous work \cite{dou2022divergent}. The extremal can be classified, but one has explicit expression only when $p=1+2/N$ and $p=1+4/N$. See Theorem \ref{thm:WangZhu} in Section 2 for a precise statement. Part (3) is the standard Brezis-Nirenberg argument for the compactness of minimizing sequences with energy below a certain threshold.  Part (4) uses the conformal equivalence of balls and $\Rnp$.

This paper concerns the following two questions:
\begin{enumerate}
     \item[(Q1)] What kind of domain do we have $\mu_p(\Omega)<\mu_p(\Rnp)$? 
    \item[(Q2)] For a general smooth convex domain, is $\mu_p(\Omega)=\mu_p(\Rnp)$? If so, is $\mu_p(\Omega)$ attainable?
\end{enumerate}

 There are some serious difficulties in attacking these problems. For instance, the loss of compactness, no moving plane available, the singularity of weight function on the whole boundary, no explicit expressions of minimizers for some cases, and so on.

 We have some partial answers for both questions. We find that the answer to (Q1) is strongly linked to the curvature of the boundary $\partial \Omega$. 

\begin{theorem}\label{thm:H<0} Suppose that $\Omega\subset \Rn(N\geq 2)$ is a domain. If $\partial \Omega $ is $C^2$ near one point and the mean curvature of this point is negative, then $\mu_p(\Omega)<\mu_p(\Rnp)$ for $p$ satisfying \eqref{assum-pnu}. Consequently, if $\Omega$ is also bounded with $C^1$ boundary then $\mu_p(\Omega)$ is attainable.
\end{theorem}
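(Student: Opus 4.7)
The plan is to construct a test function $u_\varepsilon \in H^1_0(\Omega)$ whose Rayleigh quotient lies strictly below $\mu_p(\Rnp)$ for small $\varepsilon>0$; the attainability conclusion will then follow at once from Theorem A(3). The construction is a curvature-perturbed transplant of the half-space extremal, following the classical Br\'ezis--Nirenberg / Aubin test-function scheme, adapted to the full-boundary-singular weight $\delta_\Omega^{-s}$.

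First, I would localize near a point $x_0 \in \partial\Omega$ with $H(x_0) < 0$ and introduce boundary-normal (Fermi) coordinates $y=(y',y_N)$ so that $\Omega$ corresponds locally to $\{y_N>0\}$ and $\delta_\Omega(y)=y_N$ exactly. The pulled-back Euclidean metric $g$ then admits the expansions
\[
\sqrt{\det g(y)} = 1 - (N-1)\, H(x_0)\, y_N + O(|y|^2), \qquad g^{ij}(y) = \delta^{ij} + O(|y|),
\]
and $\mathrm{tr}(\mathrm{II}_{x_0}) = (N-1)H(x_0) < 0$ is the source of sign that will drive the final strict inequality.

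Second, let $U$ be a cylindrically symmetric extremal of $\mu_p(\Rnp)$ provided by Theorem A(1) (its existence and symmetry are part of the content of \cite{dou2022divergent}). Define $U_\varepsilon(y) = \varepsilon^{-(N-2)/2} U(y/\varepsilon)$ for $N\ge 3$, respectively $U_\varepsilon(y) = U(y/\varepsilon)$ for $N=2$; each choice leaves the corresponding half-space quotient invariant. With $\Phi$ the Fermi chart and $\eta \in C_c^\infty$ a cutoff identically $1$ near $x_0$, set $u_\varepsilon(x) = \eta(x)\, U_\varepsilon(\Phi^{-1}(x)) \in H^1_0(\Omega)$. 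Substituting $z=y/\varepsilon$ and Taylor-expanding, the leading contributions to numerator and denominator reproduce the half-space quantities and give $\mu_p(\Rnp)$. The first-order $\varepsilon$-correction comes entirely from the curvature terms in $g^{ij}$ and $\sqrt{\det g}$; after integrating in $z'$, the cylindrical symmetry of $U$ collapses the traceless part of $\mathrm{II}_{x_0}$, so each contribution reduces to $H(x_0)$ times a definite moment integral of $U$. Collecting the pieces, the Rayleigh quotient takes the form $\mu_p(\Rnp) + c_0\, H(x_0)\, \varepsilon + o(\varepsilon)$ for some constant $c_0>0$, which is strictly below $\mu_p(\Rnp)$ for all sufficiently small $\varepsilon$.

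The main obstacle is to verify that $c_0$ is genuinely positive and that the remainder is really $o(\varepsilon)$, since $U$ is not explicit for generic $p$. This requires the decay estimates and Euler--Lagrange / Pohozaev identities for $U$ from \cite{dou2022divergent} to combine the individual moment integrals into a coefficient with a definite sign. The borderline case $N=2$, $s=2$ is the most delicate, because the scaling is $L^2$-critical and the weight $\delta_\Omega^{-2}$ sits exactly at the Hardy threshold: the cutoff error is then of the same order as the curvature correction and must be tracked with extra care, but the structural picture --- an $\varepsilon$-expansion of the Rayleigh quotient whose leading correction inherits the sign of $H(x_0)<0$ --- remains the same.
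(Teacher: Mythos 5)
Your proposal matches the paper's approach essentially step for step: Fermi coordinates with the second-fundamental-form metric expansion, the scaled and cut-off half-space extremal $\varphi\, U_\varepsilon$ as test function, the first-order-in-$\varepsilon$ expansion of both sides of the Rayleigh quotient reducing (via cylindrical symmetry of $U$) to mean-curvature--weighted moment integrals, and a Pohozaev-type identity for $U$ to ensure the resulting coefficient has the right sign without needing $U$ explicitly. The one cosmetic caveat is that your expansion line $g^{ij}=\delta^{ij}+O(|y|)$ hides the $2h_{ij}x_N$ term that supplies half of the $\varepsilon$-order correction (you use it later, so this is just notation), and your worry about the cutoff error in dimension $2$ is misplaced---the decay $|\nabla U|\lesssim (1+|x|)^{-N}$ makes it $O(\varepsilon^N\rho_0^{-N})=o(\varepsilon)$ already when $N=2$; the actual $N=2$ subtlety in the paper is only in justifying the integration by parts in the Pohozaev identity.
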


\begin{remark}\label{rmk:n>2}
In contrast, for $N\geq 3$, Theorem \ref{thm:H<0} does not hold for the endpoint case $p=2$, i.e. the Hardy case. For example,  $\mu_2(\Omega)=\frac{1}{4}$ holds for $\Omega$ being a small enough tubular neighborhood of a surface \cite{filippas2004sharp} and some convex domains with punctured balls \cite{avkhadiev2010hardy}. For $N=2$, however, we neither know the above theorem holds for the endpoint case $p=2$ nor can find $C^2$ non-convex planar domain such that its Hardy constant is $1/4$. It is worth pointing out that Davies \cite{davies1995hardy} showed that $\mu_2(\Omega)=\frac14$ for some non-convex plane sectors $\Omega$, but here $\Omega$ is not a $C^2$ domain.
\end{remark}


To prove $\mu_p(\Omega)<\mu_p(\Rnp)$, we construct some auxiliary functions under the Fermi coordinates. The computation here is inspired greatly by the work of \citet{escobar1992conformal} on the boundary Yamabe problem. However, unlike the boundary Yamabe problem, the extremal of $\mu_p(\Rnp)$ is not explicitly known except in two cases. We find some interesting Pohozaev type identity to overcome this difficulty (cf. Lemma \ref{lem:Poh}).

Next, we consider the weakly convex domain. We know from \cite{psaradakis20131,lewis2012geometric,giga2013representation} that for $p=2$, it holds that  $\mu_2(\Omega)=\mu_2(\mathbb{R}^N_+)$. But the situation may be different for $p$ satisfying \eqref{assum-pnu}. Specifically, when $p=2+2/N$ the minimizer of $\mu_p(\Rnp)$ has explicit form. We take advantage of this fact to establish the following theorem. 

\begin{theorem}\label{thm:H=0}
Suppose that $p=2+2/N$ and  $\Omega\subset \Rn(N\geq 18)$ is a domain. If $\partial \Omega$ is $C^3$ near one point, and the mean curvature vanishes at this point, but this point is non-umbilic, then $\mu_p(\Omega)<\mu_p(\Rnp)$. Consequently, if $\Omega$ is also bounded with $C^1$ boundary then $\mu_p(\Omega)$ is attainable.
\end{theorem}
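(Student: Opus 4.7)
The plan is the standard concentration test-function method: construct, near the exceptional boundary point, a family $\{\varphi_\epsilon\}$ of truncated rescaled extremal bubbles, and show that the Rayleigh quotient
\[
Q(\varphi_\epsilon)=\frac{\int_\Omega|\nabla\varphi_\epsilon|^2\,\ud x}{\bigl(\int_\Omega|\varphi_\epsilon|^p\delta_\Omega^{-s}\,\ud x\bigr)^{2/p}}
\]
falls strictly below $\mu_p(\Rnp)$ for $\epsilon$ small. The whole point of restricting to $p=2+2/N$ is that, at this exponent, an explicit extremal $U$ of $\mu_p(\Rnp)$ is available (as noted after Theorem A), which makes the relevant integrals actually computable.

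Concretely, after translating and rotating so that the designated point is $x_0=0\in\partial\Omega$, I would set up Fermi coordinates $(y,t)\in\R^{N-1}\times[0,\infty)$ so that $t=\delta_\Omega$ near the origin, with the pulled-back Euclidean metric
\[
g=g_{ij}(y,t)\,\ud y^i\ud y^j+\ud t^2,\qquad g_{ij}(y,t)=\delta_{ij}-2t\,h_{ij}(y)+O(t^2),
\]
and volume element $\sqrt{\det g}=1-H(y)\,t+O(t^2)$. Since $\partial\Omega$ is $C^3$ near $x_0$, $h_{ij}$ is $C^1$. Choose a smooth cutoff $\eta$ supported in a small Fermi half-ball around $0$, and set $\varphi_\epsilon(x)=\eta(x)\,\epsilon^{-(N-2)/2}U(x/\epsilon)$. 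Then expand $Q(\varphi_\epsilon)$ by inserting the metric expansion, Taylor-expanding $h_{ij}(\epsilon\,\cdot)$ and $H(\epsilon\,\cdot)$ at $0$, and rescaling. The $O(\epsilon)$ contribution carries a factor $H(0)$ and hence vanishes. All odd-order-in-$y$ corrections from derivatives of $h_{ij}$ at $0$ integrate to zero against the symmetric rescaled bubble. What remains at order $\epsilon^2$ is a combination of terms proportional to $|II(0)|^2=|\mathring{II}(0)|^2$ and to curvature scalars built from second derivatives of $h_{ij}$ at $0$; exploiting the Pohozaev-type identity for $U$ (cf. Lemma \ref{lem:Poh}) reduces these into a single expression
\[
Q(\varphi_\epsilon)=\mu_p(\Rnp)-A\,|\mathring{II}(0)|^2\,\epsilon^2+o(\epsilon^2)
\]
for a constant $A$ given by explicit integrals involving $U$.

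The main obstacle is to establish $A>0$, and this is precisely where the explicit form of $U$ at $p=2+2/N$ is indispensable; without it one would have to fall back on the Pohozaev trick alone (as in Theorem \ref{thm:H<0}), which controls the mean curvature correction but cannot be expected to pin down the sign of a traceless curvature correction. A close second issue is integrability: the weighted second moments of $U$ that define $A$, of schematic form $\int_{\Rnp}|y|^2|\nabla U|^2\,\ud x$ and $\int_{\Rnp}|y|^2\, U^p\, t^{-s}\,\ud x$, have to be finite. Because $U$ decays at infinity like $|x|^{-(N-2)}$ and the weight $t^{-s}$ with $s=1+2/N$ further slows integrability, those moments converge only when $N$ is large enough, and a careful accounting produces the threshold $N\geq 18$ stated in the theorem. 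Finally, given $\mu_p(\Omega)<\mu_p(\Rnp)$, the attainability on bounded $C^1$ domains is immediate from Theorem A (3).
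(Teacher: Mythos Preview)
Your overall framework is right, but two of the three load-bearing claims are wrong, and they are exactly the claims that distinguish the $H=0$ case from the $H<0$ case.

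First, the plain cut-off bubble $\varphi_\epsilon=\eta\,\epsilon^{-(N-2)/2}U(\cdot/\epsilon)$ does \emph{not} work for all $N\geq 18$. If you carry out the expansion you describe, the $\epsilon^2$-coefficient in front of $\|h\|^2$ is, in the paper's notation, $I_3+\tfrac{2}{p}I_4$; for $p=2+2/N$ one finds (using the explicit $U$) that this is proportional to $-\bigl((N-26)N-8\bigr)$, hence has the \emph{wrong sign} for $N\leq 26$. To push the threshold down to $N\geq 18$ the paper adds a Marques-type correction
\[
u=\varphi(|x|)\bigl(U_\varepsilon+\psi_\varepsilon\bigr),\qquad \psi(x)=A\,h_{ij}x_ix_j\,x_N\,Z(x),\quad Z=r^{-1}\partial_rU,
\]
and then optimizes over the free parameter $A$. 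The resulting quadratic in $A$ has negative values for some $A$ precisely when its discriminant $D_N$ is positive, and $D_N$ is proportional to $(N-14)N-56$, which is positive exactly for $N\geq 18$. Your test function therefore proves the theorem only for $N\geq 27$.

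Second, your diagnosis of the dimensional threshold is incorrect. The second moments $\int_{\Rnp}|x|^2|\nabla U|^2$ and $\int_{\Rnp}|x|^2\,t^{-s}U^p$ converge already for $N\geq 3$, since $|\nabla U|\lesssim(1+|x|)^{-N}$ and $U\lesssim x_N(1+|x|)^{-N}$; integrability is never the issue. The restriction $N\geq 18$ is purely a \emph{sign} condition on explicitly computed constants. Relatedly, your appeal to the Pohozaev identity (Lemma~\ref{lem:Poh}) is misplaced: that identity handles the first-order (mean-curvature) term in Theorem~\ref{thm:H<0}, but the paper states explicitly that no analogous identity is available at second order, which is why one must fall back on the explicit $U$ and a computer-assisted evaluation.
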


To investigate (Q2), we propose to study the following
\begin{align}
    \sigma_p(\Omega)=\inf\left\{\int_{\Omega}|\nabla u|^2:u\in C^\infty_0(\Omega)\text{ and } \int_{\Omega}\frac{|u|^{p}}{\r_\Omega^s}=1\right\},
\end{align}
where $\r_\Omega$ is the harmonic radius of a Lipschitz domain $\Omega$ with non-empty boundary. In dimension 2, it also coincides with the conformal radius. See Section 2 for its definition and property. For convex domains, one has the fact that $\r_\Omega\leq 2\delta_\Omega$ and the inequality is strictly somewhere in $\Omega$ unless $\Omega=\Rnp$. Moreover, it is easy to see that $\sigma_p(\Rnp)=2^{\frac{2s}{p}}\mu_p(\Rnp)$. For $\sigma_p(\Omega)$, it has the following remarkable feature.
\begin{proposition}\label{prop:main-cf}
Assume $s\in [0,2]$, $p=2^*(s)$ when $N\geq 3$ and  $s=2$, $p\geq 2$ when $N=2$. 
\begin{enumerate}
    \item If two Lipschitz domains $\Omega$ and $\widetilde \Omega$ are conformally equivalent, then $\sigma_p(\Omega)=\sigma_p(\widetilde\Omega)$.
    \item If $p$ satisfies \eqref{assum-pnu}, $\Omega$ is convex with non-empty boundary and  $\sigma_p(\Omega)= \sigma_p(\Rnp)$, then $\mu_p(\Omega)$ is attained if and only if $\Omega$ is a half-space.
\end{enumerate}
\end{proposition}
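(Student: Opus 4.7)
The plan is to prove part (1) via a direct change-of-variables under the conformal equivalence, and part (2) via a comparison argument exploiting the strict pointwise inequality $\r_\Omega<2\delta_\Omega$ available on any convex domain other than a half-space.

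For part (1), I would take a conformal equivalence $\Phi:\widetilde\Omega\to\Omega$ with conformal factor $\lambda=|d\Phi|$ and transport $u\in C_0^\infty(\Omega)$ to $\tilde u:=\lambda^{(N-2)/2}(u\circ\Phi)\in H_0^1(\widetilde\Omega)$ (the exponent degenerates to $0$ when $N=2$, so $\tilde u=u\circ\Phi$). Two ingredients do the work: the classical conformal covariance of the Dirichlet integral $\int_{\widetilde\Omega}|\nabla\tilde u|^2\,dy=\int_\Omega|\nabla u|^2\,dx$; and the conformal covariance of the harmonic radius with weight one,
$$\r_\Omega(\Phi(y))=\lambda(y)\,\r_{\widetilde\Omega}(y),$$
as developed in Section 2 (the transformation law of the conformal radius under biholomorphisms in $N=2$, and the behaviour of the Robin function under Möbius maps in $N\geq 3$). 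Combining these with $dx=\lambda^N\,dy$ produces an overall factor $\lambda^{N-s-p(N-2)/2}$ in the weighted integrand, which vanishes precisely by the critical relation $p=2^*(s)$ in $N\geq 3$ (and trivially in $N=2$ with $s=2$). Hence both the numerator and the denominator of the variational quotient are preserved by $u\mapsto\tilde u$, giving $\sigma_p(\Omega)=\sigma_p(\widetilde\Omega)$ after the usual density argument that accommodates the limited boundary regularity of $\Phi$.

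For part (2), the ``if'' direction is immediate: on $\Omega=\Rnp$ one has $\r_{\Rnp}\equiv 2\delta_{\Rnp}$, so $\sigma_p(\Rnp)=2^{2s/p}\mu_p(\Rnp)$, and attainability is Theorem \ref{thm:WangZhu2}(1). For the converse, I would suppose $\Omega$ is convex but not a half-space, $\sigma_p(\Omega)=\sigma_p(\Rnp)$, and, for contradiction, that $\mu_p(\Omega)$ is attained by some $u\in H_0^1(\Omega)$. Since $p>2$ under \eqref{assum-pnu}, replacing $u$ by $|u|$ and applying the strong maximum principle to the Euler--Lagrange equation $-\Delta u=\mu_p(\Omega)u^{p-1}/\delta_\Omega^s$ lets me assume $u>0$ in $\Omega$. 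The property $\r_\Omega<2\delta_\Omega$ on an open set (recalled in Section 2) then yields the strict inequality
$$\int_\Omega\frac{|u|^p}{\r_\Omega^s}\,dx \;>\; 2^{-s}\int_\Omega\frac{|u|^p}{\delta_\Omega^s}\,dx,$$
from which
$$\sigma_p(\Omega)\le\frac{\int_\Omega|\nabla u|^2}{\bigl(\int_\Omega |u|^p/\r_\Omega^s\,dx\bigr)^{2/p}}<2^{2s/p}\,\mu_p(\Omega).$$
Combined with $\sigma_p(\Omega)=\sigma_p(\Rnp)=2^{2s/p}\mu_p(\Rnp)$, this forces $\mu_p(\Omega)>\mu_p(\Rnp)$, contradicting Theorem \ref{thm:WangZhu2}(2).

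The hard part, really, is the conformal covariance of the harmonic radius in dimension $N\geq 3$: one has to pin down the precise definition adopted in Section 2 (via Green's/Robin function) and verify that Möbius transformations act on it with weight one. The strict comparison $\r_\Omega<2\delta_\Omega$ for non-half-space convex $\Omega$ is another prerequisite, but it has been announced as available in Section 2. Once both are in hand, the positivity of the minimizer follows from the standard $u\mapsto|u|$ plus strong-maximum-principle argument, and part (2) reduces to the short chain of comparisons displayed above.
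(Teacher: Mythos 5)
Your proposal is correct and follows essentially the same route as the paper: part (1) is exactly the change-of-variables identity in Lemma \ref{lem:f}, and part (2) rests on the same three facts the paper uses, namely $\r_\Omega\le 2\delta_\Omega$ on convex $\Omega$ with equality only for a half-space, the identity $\sigma_p(\Rnp)=2^{2s/p}\mu_p(\Rnp)$, and the bound $\mu_p(\Omega)\le\mu_p(\Rnp)$ from Theorem \ref{thm:WangZhu2}(2). The only presentational difference is that you cast part (2) as a proof by contradiction rather than the paper's direct chain of inequalities, and in making explicit that the minimizer may be taken strictly positive (by replacing $u$ with $|u|$ and applying the strong maximum principle to the Euler--Lagrange equation) you fill in a step the paper leaves implicit when it passes from saturation of \eqref{Jleq} to $\r_\Omega\equiv 2\delta_\Omega$ on all of $\Omega$.
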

We now consider $N=2$. By Riemann mapping theorem, any simply connected hyperbolic region (a planar domain whose complement contains at least two points) is conformal to $\mathbb{R}^2_+$. Then we have 




\begin{theorem}\label{thm:planar} For planar domains, we assume $s=2$ and $p\in [2,\infty)$. Then  

\begin{enumerate}
    \item For any simply connected Lipschitz hyperbolic region $\Omega$, one has  $\sigma_p(\Omega)=\sigma_p(\mathbb{R}^2_+)$.  In particular, $\sigma_2(\mathbb{R}_+^2)=1$.
    \item If $\Omega\subset \mathbb{R}^2$ is a convex domain with non-empty boundary, then $\mu_p(\Omega)=\mu_p(\mathbb{R}^2_+)$. Moreover, if $p=2$, then $\mu_2(\Omega)$ is never attained. If $p\in (2,\infty)$, then $\mu_p(\Omega)$ is attainable if and only if  $\Omega$ is a half-plane.
\end{enumerate}
\end{theorem}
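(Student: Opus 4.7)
My plan is to leverage the Riemann mapping theorem and the conformal invariance already established in Proposition~\ref{prop:main-cf}(1). For part (1), every simply connected hyperbolic region $\Omega\subset\mathbb{R}^2$ is conformally equivalent to the unit disk, hence to $\mathbb{R}^2_+$ after composing with the Cayley transform. When $\Omega$ is Lipschitz, Carath\'eodory's theorem extends the Riemann map to a homeomorphism of closures, placing us in the hypotheses of Proposition~\ref{prop:main-cf}(1), which yields $\sigma_p(\Omega)=\sigma_p(\mathbb{R}^2_+)$. The specific value $\sigma_2(\mathbb{R}^2_+)=1$ I would compute directly from $\delta_\Omega=y$ and $\mathfrak{r}_\Omega=2y$ on $\mathbb{R}^2_+$, together with the classical one-dimensional Hardy constant $\mu_2(\mathbb{R}^2_+)=1/4$ and the scaling relation $\sigma_p(\mathbb{R}^N_+)=2^{2s/p}\mu_p(\mathbb{R}^N_+)$ mentioned in the excerpt.

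For the equality of best constants in part (2), any convex $\Omega\subset\mathbb{R}^2$ with nonempty boundary is automatically Lipschitz, simply connected, and hyperbolic, so part (1) gives $\sigma_p(\Omega)=\sigma_p(\mathbb{R}^2_+)$. The convex-domain bound $\mathfrak{r}_\Omega\leq 2\delta_\Omega$ then implies, after rescaling admissible test functions, $\mu_p(\Omega)\geq 2^{-2s/p}\sigma_p(\Omega)=\mu_p(\mathbb{R}^2_+)$, while the reverse inequality is Theorem~A(2) (the boundary of a convex set admits tangent lines).

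For attainability with $p\in(2,\infty)$, which lies in the range \eqref{assum-pnu} for $N=2$, I would invoke Proposition~\ref{prop:main-cf}(2) directly to conclude that $\mu_p(\Omega)$ is attained precisely when $\Omega$ is a half-plane. The case $p=2$ falls outside \eqref{assum-pnu} and requires a separate argument along the same philosophy. Suppose $u\in H^1_0(\Omega)$ attains $\mu_2(\Omega)=1/4$ with $\int u^2/\delta_\Omega^2=1$. The sandwich
$$\tfrac14=\int|\nabla u|^2\ \geq\ \int \frac{u^2}{\mathfrak{r}_\Omega^2}\ \geq\ \tfrac14\int \frac{u^2}{\delta_\Omega^2}=\tfrac14,$$
where the first inequality uses $\sigma_2(\Omega)=1$ and the second uses $\mathfrak{r}_\Omega\leq 2\delta_\Omega$, forces equality throughout, so $2u$ attains $\sigma_2(\Omega)=1$. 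Pulling back by the Riemann map $\Phi:\Omega\to\mathbb{R}^2_+$ from part (1) produces a minimizer of $\sigma_2(\mathbb{R}^2_+)=1$, contradicting the classical non-attainability of the Hardy constant $\mu_2(\mathbb{R}^2_+)=1/4$.

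The main obstacle is this $p=2$ case: Proposition~\ref{prop:main-cf}(2) does not cover it, and one must exploit conformal invariance at the level of minimizers, not merely of infima. Verifying that the pullback of $u\in H^1_0(\Omega)$ by a Riemann map lies in $H^1_0(\mathbb{R}^2_+)$ requires boundary regularity of $\Phi$; this is available for Lipschitz domains but needs extra care for unbounded convex $\Omega$ such as strips or wedges, where the boundary is non-compact and the Riemann map's boundary behavior must be tracked globally. A related subtlety is confirming that every convex planar domain with nonempty boundary indeed falls under the Lipschitz-hyperbolic hypothesis of part (1), which follows from the local graph representation of convex boundaries together with the observation that its complement contains at least two points.
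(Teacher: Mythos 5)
Your proposal is correct and follows essentially the same route as the paper: Riemann mapping theorem together with Proposition~\ref{prop:main-cf}(1) to obtain $\sigma_p(\Omega)=\sigma_p(\mathbb{R}^2_+)$, the convexity bound $\mathfrak{r}_\Omega\leq 2\delta_\Omega$ to pin down $\mu_p(\Omega)=\mu_p(\mathbb{R}^2_+)$, and Proposition~\ref{prop:main-cf}(2) for the attainability dichotomy when $p>2$. Where you differ (and are more careful) is at the endpoint $p=2$, which you correctly note falls outside~\eqref{assum-pnu} and hence outside the stated hypotheses of Proposition~\ref{prop:main-cf}(2). Your sandwich
$\tfrac14=\int|\nabla u|^2\geq\int u^2/\mathfrak{r}_\Omega^2\geq\tfrac14\int u^2/\delta_\Omega^2=\tfrac14$
is a clean fix; it is essentially the proof of Proposition~\ref{prop:main-cf}(2) re-run for $p=2$, with the classical $\mu_2(\Omega)\leq 1/4$ for convex domains replacing the appeal to Theorem~\ref{thm:WangZhu2}(2). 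The paper is terse here and, as written, leans on the proposition beyond its stated range, so your explicit argument is an improvement rather than a deviation. There is also a small difference in mechanism: you extract the contradiction from the first equality in the sandwich (pushing a minimizer of $\sigma_2(\Omega)$ through the Riemann map to contradict non-attainability of $\sigma_2(\mathbb{R}^2_+)=4\mu_2(\mathbb{R}^2_+)$), while the paper's route for Proposition~\ref{prop:main-cf}(2) exploits the second equality, which forces $\mathfrak{r}_\Omega=2\delta_\Omega$ on all of $\Omega$ and hence, by Lemma~\ref{lem:r-property}(4), that $\Omega$ is a half-plane; both then conclude from the classical non-attainability of $\mu_2(\mathbb{R}^2_+)=1/4$. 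On your final caveat: the boundary regularity of the Riemann map is not actually needed in part (1), since Proposition~\ref{prop:main-cf}(1) only requires a conformal equivalence of the open domains to transport $C^\infty_0$ test functions; and in your $p=2$ step the pushforward of the $H^1_0(\Omega)$ minimizer can be handled by approximating in $C^\infty_0(\Omega)$, whose images remain compactly supported in $\mathbb{R}^2_+$, so no boundary extension of the Riemann map is required.
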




 The application of conformal invariance to Hardy inequalities on $\mathbb{R}^2$ has its origins in the work of \citet{ancona1986strong}. Ancona utilized both the Riemann mapping theorem and Koebe’s one-quarter theorem to establish a lower bound for the Hardy constant in the context of simply connected domains. The introduction of harmonic radius here can give several novel results about Hardy inequality on convex domains. The second part above expands upon existing results in the literature in the context of Hardy's case, i.e.\,$p=2$. Notably, we refrain from imposing any smoothness assumptions on $\partial \Omega$, while \cite{marcus1998best} specifically concludes the non-attainability of the Hardy constant for $C^2$ smooth convex bounded domains. Moreover, 
a by-product of our approach is  
    \begin{align}\label{conf-R2}
     \int_{\Omega} \frac{u^2}{\r_\Omega^2}\leq \int_{\Omega}|\nabla u|^2.
    \end{align}
Note that if $\Omega$ is a convex planner domain, \eqref{conf-R2} is stronger than Hardy's inequality \eqref{Hardy}. In fact, by Lemma \ref{lem:r-property}, one has $\r_\Omega\leq 2\delta_\Omega$. Thus
\begin{align}
    \int_{\Omega}|\nabla u|^2\geq \int_{\Omega}\frac{u^2}{\r_\Omega^2}\geq \int_{\Omega}\frac{u^2}{4\delta_\Omega^2}.
\end{align}

For bounded planner domains, it is well-known that mean convexity is equivalent to convexity, see \cite[Thm 6.20]{abbena2017modern}. Combining Theorem \ref{thm:H<0} and \ref{thm:planar}, we have a rather complete answer of (Q1) and (Q2) for planar domains.
\begin{corollary}
Suppose $s=2, p\in (2,+\infty)$, and $\Omega\subset \mathbb{R}^2$ is a bounded $C^2$ domain. Then $\mu_p(\Omega)$ is attainable if and only if $\Omega$ is non-convex. 
\end{corollary}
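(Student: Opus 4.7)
The plan is to combine the two directional results already proved in the paper—Theorem \ref{thm:H<0} for the non-convex side and Theorem \ref{thm:planar}(2) for the convex side—using the fact (quoted in the preceding paragraph) that for bounded planar domains mean convexity coincides with convexity.

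For the ``only if'' direction, I would assume $\Omega$ is convex. Since $\Omega$ is bounded, it is certainly not a half-plane. Then Theorem \ref{thm:planar}(2), applied with $p\in(2,\infty)$, immediately gives that $\mu_p(\Omega)$ is not attained. Note this direction does not actually require the $C^2$ hypothesis, only boundedness and convexity.

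For the ``if'' direction, I would assume $\Omega$ is non-convex and exploit that $\Omega$ is a bounded $C^2$ planar domain. By the equivalence mean convex $\Leftrightarrow$ convex in this setting (\cite[Thm 6.20]{abbena2017modern}), non-convexity forces the existence of a boundary point where the curvature (which in $\mathbb{R}^2$ is the mean curvature) is strictly negative. Applying Theorem \ref{thm:H<0} at this point yields $\mu_p(\Omega)<\mu_p(\mathbb{R}^2_+)$, and since $\Omega$ is bounded with $C^2$ (hence $C^1$) boundary, the same theorem further guarantees that the strict inequality upgrades to attainability of $\mu_p(\Omega)$ in $H^1_0(\Omega)$.

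There is essentially no technical obstacle to assemble, since the two theorems are tailored precisely to the two cases; the only point to verify carefully is the reduction ``non-convex bounded $C^2$ planar domain $\Rightarrow$ some boundary point of negative curvature'', which is exactly the content of the cited equivalence. Thus the corollary follows with no new computation.
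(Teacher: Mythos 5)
Your proof is correct and follows essentially the same route the paper takes (which is stated only in one sentence before the corollary): use Theorem \ref{thm:planar}(2) for the convex case, and for the non-convex case invoke the classical equivalence between mean convexity and convexity for bounded planar boundaries to produce a point of negative boundary curvature, so that Theorem \ref{thm:H<0} applies. One small remark you may wish to keep in mind when formalizing: the cited equivalence is stated for simple closed curves (simply connected domains); if $\Omega$ has several boundary components, a point of negative curvature is still forced (e.g.\ by the rotation index of an inner boundary component), so the ``if'' direction also covers multiply connected bounded $C^2$ domains.
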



In higher dimensions $N\geq 3$, the Liouville theorem asserts that the conformal transformations consist of translation, scaling, and inversion. The scarcity of conformal equivalence in higher dimensions makes the application of Proposition \ref{prop:main-cf} limited. Recall that Theorem \ref{thm:WangZhu2} proves the non-attainability of $\mu_p(B)$ for balls $B$. We can address (Q2) for domains that are $C^2$ close to balls in high dimensions $N\geq 3$.
\begin{theorem}\label{thm:c-ball}
    Assume $N\geq 3$, $s\in (0,2)$ and $p=2^*(s)$. For any domain $\Omega$ which is sufficiently close to a ball in $C^2$ sense, it holds  $\mu_p(\Omega)=\mu_p(\mathbb{R}^N_+)$ and $\mu_{p}(\Omega)$ is not attainable.
\end{theorem}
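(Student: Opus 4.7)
The plan is to reduce the theorem to a non-attainability statement and then argue by contradiction using concentration analysis together with a curvature-sensitive expansion. By Theorem \ref{thm:WangZhu2}(2), any bounded $C^1$ domain $\Omega$ already satisfies $\mu_p(\Omega)\le\mu_p(\Rnp)$, and by Theorem \ref{thm:WangZhu2}(3) strict inequality would force attainability; the contrapositive shows that once non-attainability for $\Omega$ close to the ball $B$ is proved, the equality $\mu_p(\Omega)=\mu_p(\Rnp)$ follows automatically.

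Suppose, for contradiction, that non-attainability fails in every $C^2$-neighborhood of $B$. I would extract a sequence $\Omega_n\to B$ in $C^2$ with minimizers $u_n\in H^1_0(\Omega_n)$ of $\mu_p(\Omega_n)$ normalized by $\int_{\Omega_n} u_n^p/\delta_{\Omega_n}^s=1$, so that $\|\nabla u_n\|_{L^2}^2=\mu_p(\Omega_n)\le \mu_p(\Rnp)$. Theorem \ref{thm:WangZhu2}(4) forbids a minimizer on $B$, so after pullback to $B$ through $C^2$-small diffeomorphisms the sequence cannot be $H^1$-strongly compact (any strong limit would be a minimizer on $B$). Standard concentration-compactness, adapted to the boundary weight $\delta_{\Omega_n}^{-s}$, then produces $x_n\in\partial\Omega_n$ and scales $\varepsilon_n\downarrow 0$ such that, in Fermi coordinates at $x_n$, the rescaling $v_n(y):=\varepsilon_n^{(N-2)/2}u_n(\exp_{x_n}(\varepsilon_n y))$ converges in $H^1_{\mathrm{loc}}(\overline{\Rnp})$ to an extremal $U$ of $\mu_p(\Rnp)$ from Theorem \ref{thm:WangZhu2}(1).

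The heart of the argument is a one-sided asymptotic of the form
\begin{equation*}
\mu_p(\Omega_n)\ \ge\ \mu_p(\Rnp) + c_{N,s}\, H(x_n)\, \varepsilon_n + o(\varepsilon_n),
\end{equation*}
with an explicit positive constant $c_{N,s}$ and $H(x_n)$ the mean curvature of $\partial\Omega_n$ at $x_n$. I would extract it by unfolding $u_n$ in Fermi coordinates, expanding the volume element as $\sqrt{\det g}=1-(N-1)H(x_n)t+O(t^2)$ in the normal coordinate $t=\delta_{\Omega_n}$, and invoking the Pohozaev-type identity of Lemma \ref{lem:Poh} to evaluate the resulting curvature coefficient without an explicit form of $U$ (the same mechanism that fuels Theorem \ref{thm:H<0}, now used in the opposite direction). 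Since $\Omega_n\to B$ in $C^2$, the mean curvatures $H(x_n)$ converge uniformly to $1/R>0$, where $R$ is the radius of $B$. For $n$ large this gives $\mu_p(\Omega_n)>\mu_p(\Rnp)$, contradicting the upper bound and completing the proof.

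The principal obstacle is the refined expansion itself. The extremals of $\mu_p(\Rnp)$ are classified but available in closed form only at $p=1+2/N$ and $p=1+4/N$, so the $H(x_n)\varepsilon_n$ correction cannot be read off by direct computation against a known profile. Lemma \ref{lem:Poh} is the crucial lever for computing $c_{N,s}$, and the accompanying error control requires quantitative decay of $U$ and $\nabla U$ that remains integrable against the singular weight $\delta^{-s}$ both along the boundary $\{y_N=0\}$ and at infinity, matched against the Fermi-chart deviation of the metric from flat $\Rnp$.
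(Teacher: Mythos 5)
Your overall strategy is correctly aligned with the paper: reduce to a non-attainability statement (the equality then follows from Theorem~\ref{thm:WangZhu2}(2)--(3)), argue by contradiction from a blow-up sequence of minimizers on domains converging to the ball, and detect the positive mean curvature at leading order via the Pohozaev identity of Lemma~\ref{lem:Poh}. That much matches Sections~4 and 6 of the paper, including the identification of the correct limiting profile $U$ after Fermi rescaling.

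The gap is in the central claim. You assert
\begin{equation*}
\mu_p(\Omega_n)\ \ge\ \mu_p(\Rnp) + c_{N,s}\, H(x_n)\, \varepsilon_n + o(\varepsilon_n)
\end{equation*}
and say you would ``extract it by unfolding $u_n$ in Fermi coordinates, expanding the volume element \dots and invoking the Pohozaev-type identity.'' But $\mu_p(\Omega_n)$ is the quotient evaluated at the actual minimizer $u_n$, which has no explicit form; the Fermi expansions in Lemmas~\ref{lem:F-1} and \ref{lem:F-2} compute the quotient only for the \emph{specific} test function $V = \varphi\,U_\varepsilon$, and $H^1_{\mathrm{loc}}$ convergence of the rescaled $u_n$ to $U$ is far too weak to transfer those expansions to $u_n$ with an error $o(\varepsilon_n)$. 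This is precisely where the paper's Steps~3--5 do real work: one must (i) perform the Bahri--Coron type decomposition $v_t = c_t V_{x(t),\varepsilon(t),t} + w_t$ with $w_t$ orthogonal to the span of $V$ and its first-order variations (\eqref{orth-w}), (ii) invoke a coercivity inequality (Claim~\ref{clm:non-deg}) whose proof rests on the non-degeneracy of $U$ established in the appendix (Theorem~\ref{thm:non-deg}), and (iii) prove the quantitative bound $\int_{\Omega_0}\delta_t^{-s}V_t^{p-1}w_t\,dvol_{g_t}=O(\varepsilon^2)$ (Claim~\ref{clm:Vw}), which in turn forces $\|\nabla w_t\|_{g_t}=O(\varepsilon)$. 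Only then does the Taylor expansion of the quotient around $V_t$ close at order $o(\varepsilon)$, and the curvature term from $J^t(V_t)$ becomes visible. Without these ingredients the proposal has no way to rule out the possibility that the remainder contributes a negative correction at order $\varepsilon$ that cancels the curvature term, so the contradiction does not go through. You should make the decomposition, the non-degeneracy input, and the interaction estimate explicit; they are not technical afterthoughts but the engine that converts a test-function computation into a statement about the minimizer.
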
 
The proof is inspired by the work of \citet{PAN1998791} on the Lin-Ni problem. Our problem resembles the Lin-Ni one because the minimizing sequence of $\mu_p(\Omega)$ can only blow up at the boundary. We can draw from Theorem \ref{thm:c-ball} that there is some ellipsoid $E$, such that $\mu_{p}(E)=\mu_p(\mathbb{R}^N_+)$ and $\mu_{p}(E)$ is not attainable. But it is still open whether for any ellipsoid, the best constant is equal to $\mu_p(\mathbb{R}^N_+)$ and not attainable.

\subsection{Discussion}
We shall summarize the results obtained and state some open problems. 
For smooth bounded domains $\Omega$, the attainability results we obtained are listed in table \ref{tab:cmp} to make some comparisons to that of the Hardy case. There are some cases not completely solved. For instance, we conjecture that for (smooth) bounded convex domains $\mu_p(\Omega)$ is not attainable.


\begin{table}[ht]
\begin{tabular}{|c|c|c|c|}
\hline
 Domains $\Omega$ & $p=2$ & $p=2^*(s)$, $s\in (0,2)$, $N\geq 3$&  $p>2$, $N=2$\\ \hline
  $\inf H<0$ & may or may not & A & A\\ \hline
 $H\geq 0$& N & A under further conditions & N\\ \hline
 convex & N &  {N near balls}& N \\ \hline
\end{tabular}
\vspace{0.4cm}
\caption{Attainability of $\mu_p(\Omega)$. Here A denotes attainable and N denotes not attainable. $H$ is the mean curvature of $\partial \Omega$.}
\label{tab:cmp}
\end{table}





In the level of Euler-Lagrange equation, $\mu_p(\Omega)$ is also different from the two endpoints case. The minimizers of  $\mu_p(\Omega)$, up to the multiplication of some constant, satisfy the following elliptic equation, 
\begin{align}\label{E-L}
  \begin{cases}
    \Delta u+\delta_{\Omega}^{-s}{u^{p-1}}=0,\, u>0, & \text{ in } \Omega, \\
     u=0, &\text{ on }\partial \Omega.
  \end{cases}
\end{align}
For $p$ and $s$ satisfying \eqref{assum-pnu}, the existence of solutions to the above singular elliptic equations seems to be open. For domains such that $\mu_p(\Omega)$ is not attainable, it just says there is no ground state (or least energy) solution to \eqref{E-L}, but it might still have higher energy solutions. In fact, if $\Omega$ is the unit ball in $\Rn$, \cite{senba1990dirichlet} and \cite{cheng2022estimate} have found a radial symmetric solution for \eqref{E-L}. This solution can not be the ground state one considering Theorem \ref{thm:WangZhu2}. This shows the distinctive behavior of \eqref{E-L} between $s\in (0,2)$ and $s=0$. Indeed, when $s=0$, the Pohozaev identity implies there is no solution to $\Delta u+u^{(N+2)/(N-2)}=0$ if the domain is star-shaped. 
However, in our case, the Pohozaev identity for \eqref{E-L} is too complicated to obtain some non-existence results. 



\subsection{Structure of this paper} In Section 2, we shall give some preliminary about some properties of the minimizer of $\mu_p(\Rnp)$. Harmonic radius will also be introduced. In Section 3, we will give the proof of Proposition \ref{prop:main-cf} and Theorem \ref{thm:planar}. Sections 4, 5, and 6 are devoted to proving Theorem \ref{thm:H<0}, Theorem \ref{thm:H=0}, and \ref{thm:c-ball} respectively. Finally, in the appendix, we prove  Lemma \ref{lem:U} and the non-degeneracy of the minimizers.

\section{Preliminary}

Denote $\Rnp=\{x=(x',x_N): x'\in \mathbb{R}^{N-1}, x_N>0\}$ and  $\mathcal{D}_0^{1,2}(\Rnp)$ as the completion of $C^\infty_0(\Rnp)$ under the norm $(\int_{\Rnp}|\nabla u|^2dx)^{1/2}$. Recall M\"obius transformation
\begin{align}
\begin{split}\label{Mo}
\mathcal{M}:\Rnp&\to \Bn\\
(x',x_{N})&\mapsto y=\left(\frac{2x'}{|x'|^2+(1+x_N)^2},\frac{1-|x|^2}{|x'|^2+(1+x_N)^2}\right).
\end{split}
\end{align}

In \cite{wang2022hardy}, the second named author and Meijun Zhu proved the following result.
\begin{customthm}{B}\label{thm:WangZhu}
Assume $p$ and $s$ satisfy \eqref{assum-pnu}. Then $\mu_p(\Rnp)$ 
can be attained by some $0<U\in \mathcal{D}^{1,2}_0(\Rnp)$. All minimizers of $\mu_p(\Rnp)$ consist of 
\begin{align}\label{extremals}
\{CU\left({\varepsilon}(x'-x_0),{\varepsilon}x_N\right):\forall\, C>0,\forall\,\varepsilon>0,\forall\, x_0\in\mathbb{R}^{N-1}\}.
\end{align}
Moreover, $U$ is unique in the sense that if we define $\tilde U$ on $\Bn$ by $\tilde U\circ \mathcal{M}=U|\det(D\mathcal{M})|^{\frac{2-N}{2N}}$, where $\mathcal{M}$ is the M\"obius transformation \eqref{Mo}, then $\tilde U$ is the unique radial solution of  
\begin{align}\label{uinB}
\begin{cases}
    \Delta u+\frac{2^su^{p-1}}{(1-|y|^2)^s}=0\quad &\text{in}\quad \Bn,\quad 0<u\in C^2(\Bn)\cap C^0(\overline{\Bn}),\\
    u=0 \quad &\text{on}\quad\partial \Bn.
\end{cases}
\end{align}
Furthermore, all the positive solutions of \eqref{E-L} with $\Omega=\Rnp$ in $\mathcal{D}^{1,2}_0(\Rnp)$ is of the form in \eqref{extremals}.
\end{customthm}

\begin{remark}
The equality case follows from \cite{dou2022divergent}.
In the following two cases, the extremal functions can be explicitly written out and the best constant can be calculated.
\begin{enumerate}
\item $p=2+\frac{2}{N}$ (that is $s=1+\frac{2}{N}$), one has
\begin{align}
U(x',x_N)&=(2N)^{\frac{N}{2}}\frac{x_N}{[(1+x_N)^2+|x'|^2]^{N/2}},\,\, x'\in \mathbb{R}^{N-1}, x_N>0,\label{U:2/n}\\
\tilde U(y)&=\frac{1}{2}N^{\frac{N}{2}}(1-|y|^2),\,\,\, y\in \Bn.\notag
\end{align}
\item $p=2+\frac{4}{N}$ (that is $s=\frac{4}{N}$), one can take
\begin{align}
U(x',x_N)&= [N(N+2)]^{\frac{N}{4}}\frac{x_N}{(1+|x|^2)^{N/2}},\,\, x'\in \mathbb{R}^{N-1}, x_N>0,\label{U:4/n}\\
\tilde U(y)&= \frac12[N(N+2)]^{\frac{N}{4}}(1-|y|^2)(1+|y|^2)^{-\frac{N}{2}},\,\,\, y\in \Bn.\notag
\end{align}
\end{enumerate}
\end{remark}

For other cases, although we do not know the explicit expression of solutions, we can still have the following estimates. The proof is deferred to the appendix.
\begin{lemma}\label{lem:U}
    Suppose $U$ is the unique solution defined in Theorem \ref{thm:WangZhu}. Then 
    \begin{enumerate}
        \item $U$ is cylindrical, i.e. $U(x',x_N)=U(|x'|,x_N).$
        \item There exists a positive constant $C=C(p,N)$ such that for all $x\in \Rnp$,
   \begin{align}
    \begin{split}\label{u-bound}
 C^{-1}x_N(|x|+1)^{-N}&\leq |U(x)| \leq Cx_N(|x|+1)^{-N},\\
 |\nabla U(x)| &\leq C(|x|+1)^{-N}. 
 \end{split}
\end{align}
\item If $N=2$, then $U$ attains the global maximum only at $(0',1)\in \mathbb{R}^2_+$. 
If $N\geq3$, then $U$ attains the global maximum at  finitely many points in $\Rnp$, and all of them lie on the interval $\{(0',x_N):0<x_N\leq 1\}$.
\end{enumerate}
\end{lemma}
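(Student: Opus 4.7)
The plan is to exploit the conformal equivalence with $\Bn$ provided by Theorem \ref{thm:WangZhu}, writing $U(x) = \tilde U(\mathcal{M}(x))\,|\det D\mathcal{M}(x)|^{(N-2)/(2N)}$ and using the explicit identities
\begin{equation*}
|\det D\mathcal{M}(x)|^{1/N} = \frac{2}{D}, \qquad 1-|\mathcal{M}(x)|^2 = \frac{4x_N}{D}, \qquad D := |x'|^2 + (1+x_N)^2,
\end{equation*}
which one checks by writing $\mathcal{M}(x) = 2(x+e_N)/|x+e_N|^2 - e_N$. Part (1) follows immediately: every rotation in $SO(N-1)$ acting on $x'$ corresponds to a rotation of $\Bn$ fixing the $y_N$-axis, and since $\tilde U$ is radial and $D$ is invariant under such rotations, $U$ depends only on $(|x'|, x_N)$.

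Part (2) rests on the sharp two-sided boundary behavior $c(1-|y|) \le \tilde U(y) \le C(1-|y|)$ on $\overline{\Bn}$. For the upper bound I would first establish $\tilde U \in L^\infty(\Bn)$ by a Brezis--Kato / Moser-type bootstrap on the equation \eqref{uinB}, then upgrade to boundary Lipschitz regularity using that $\tilde U^{p-1}(1-|y|^2)^{-s}$ lies in $L^q_{\mathrm{loc}}(\overline{\Bn})$ for large $q$ (which holds because $p>s$ under \eqref{assum-pnu}). For the lower bound, $\tilde U$ is superharmonic and vanishes on $\partial\Bn$, so Hopf's lemma applied to a suitable approximating nonsingular problem gives $\tilde U \ge c(1-|y|)$ near $\partial\Bn$; positivity and continuity handle the interior. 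Substituting into the formula for $U$ and using $D \asymp (1+|x|)^2$ yields $|U(x)| \asymp x_N/(1+|x|)^N$; the gradient bound follows by differentiating the formula and invoking $C^1$ bounds for $\tilde U$ up to the boundary.

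For part (3), cylindrical symmetry reduces the problem to the $(|x'|,x_N)$-half-plane, and two monotonicity facts drive the argument. First, $\tilde U$ is strictly radially decreasing: writing $\tilde U(y)=f(|y|)$, the radial ODE $(r^{N-1}f'(r))' = -2^s r^{N-1} f^{p-1}/(1-r^2)^s$ integrated from $0$ gives $f'(r)<0$ for $r>0$. Second, for fixed $x_N>0$, both $|\mathcal{M}(x',x_N)|$ and $D$ are strictly increasing in $|x'|$, so both $\tilde U(\mathcal{M}(x))$ and the conformal factor $(2/D)^{(N-2)/2}$ are strictly decreasing in $|x'|$; hence the supremum of $U$ is attained only on the axis $\{x'=0\}$. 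On this axis the identity $|\mathcal{M}(0',x_N)| = |\mathcal{M}(0',1/x_N)|$ together with $D(0',x_N) = (1+x_N)^2$ gives
\begin{equation*}
\frac{U(0',1/x_N)}{U(0',x_N)} = x_N^{N-2} > 1 \quad \text{for } x_N > 1,
\end{equation*}
so all maxima lie in $\{(0',x_N) : 0<x_N\le 1\}$. When $N=2$ the conformal factor is trivial, $U(0',x_N) = f((1-x_N)/(1+x_N))$, and strict monotonicity of $f$ forces the unique maximum at $x_N=1$. When $N\ge 3$, $U$ solves $-\Delta U = x_N^{-s} U^{p-1}$ in $\Rnp$ with a real-analytic nonlinearity, hence is itself real-analytic there; therefore $x_N\mapsto U(0',x_N)$ has isolated critical points, and the boundary decay from (2) confines the maxima to a compact subset of $(0,1]$, yielding finitely many.

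The main obstacle will be establishing the sharp lower bound $\tilde U(y)\ge c(1-|y|)$ in part (2), since the nonlinearity in \eqref{uinB} blows up at $\partial\Bn$ and a direct application of Hopf's lemma is not licit. I would handle this by truncating the singular weight (for instance replacing $(1-|y|^2)^{-s}$ by $(1-|y|^2+\varepsilon)^{-s}$ or solving on shrinking balls $B_{1-\varepsilon}$), using the uniform $L^\infty$ bound and the inequality $p>s$ to construct boundary barriers of the form $A(1-|y|)$ that survive in the limit $\varepsilon\to 0$.
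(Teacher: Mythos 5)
Your part (1) and part (3) are essentially the paper's proof. The only substantive divergence is in part (2), and there the approaches really are different. The paper does not prove the two-sided bound on $\tilde U$ from scratch: it imports from \cite{dou2022divergent,wang2022hardy} the factorization $\tilde U(y)=\tfrac{1-|y|^2}{2}\tilde V(y)$, where $\tilde V$ is the unique radial solution of a \emph{nonsingular} ODE with $\tilde V|_{\partial\Bn}=K>0$ and $\tilde V\in C^{2}[0,1)\cap C^{\alpha}[0,1]$. Boundedness and positivity of $\tilde V$ up to the boundary then give $C^{-1}(1-r^2)\le \tilde U(r)\le C(1-r^2)$ in one line, and the $C^1$ bound on $\tilde U$ (hence the gradient estimate) follows from $\tilde U'(1)=-\tilde V(1)$. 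You propose instead to establish boundary Lipschitz bounds and a Hopf-type lower bound directly on the singular equation \eqref{uinB}. That is a legitimate alternative and would make the lemma self-contained, but it is more work, and you correctly flag the lower bound as the hard part.

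One point to fix in your sketch of the upper bound: from $\tilde U\in L^\infty$ alone, the right-hand side $\tilde U^{p-1}(1-|y|^2)^{-s}$ is only in $L^q_{\mathrm{loc}}(\overline{\Bn})$ for $q<1/s$, which for $s$ near $2$ is not even $L^1$; the parenthetical ``because $p>s$'' is circular, since $p>s$ gives integrability with large $q$ only \emph{after} one knows $\tilde U\lesssim (1-|y|)$. You would need an honest bootstrap: if $\tilde U\le C(1-|y|)^{\alpha}$ then $-\Delta\tilde U\le C(1-|y|)^{\alpha(p-1)-s}$, and a barrier upgrade gives $\tilde U\le C(1-|y|)^{\alpha'}$ with $\alpha'=\min\{\alpha(p-1)-s+2,\,1\}$; since $p>2$ and $s<2$ this iteration reaches $\alpha=1$ in finitely many steps starting from $\alpha=0$. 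With that correction your route is sound, though longer than the paper's citation of the $\tilde V$--decomposition. For part (3), your monotonicity-in-$|x'|$ argument and the identity $U(0',1/x_N)/U(0',x_N)=x_N^{N-2}$ are equivalent to the paper's computation of $\partial_{x_i}U$ via Möbius coordinates and the monotonicity of $U\circ\mathcal{M}^{-1}(0,y_N)$ for $y_N\in(-1,0)$; both finish finiteness by analyticity.
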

\begin{remark}
    For $N\geq 3$ and any $s\in (0,2)$, we conjecture that $U$ has only one critical point where it takes the global maximum. One can verify this for $s=1+2/N$ and $s=4/N$ (see \eqref{U:2/n} and \eqref{U:4/n}). However, for general $s\in (0,2)$, we do not know how to prove this.
\end{remark}

Let us recall some basic facts about harmonic radius (see \cite{bandle1996harmonic}). Suppose that $\Omega$ is an Euclidean domain of dimension $N\geq2$ with Lipschitz boundary. The Green's function $G(x,y)$ of $\Omega$ with boundary Dirichlet boundary condition satisfies 
\begin{align}
\begin{cases}
\Delta_y G(x, y)  =-\delta_y(x), &\text { in } \Omega, \\
G(x,y)  =0, &\text { on } \partial \Omega ,
\end{cases}
\end{align}
 here, $\delta_y$ is the Dirac function located at $y\in\Omega$. The Green's function can be decomposed into a fundamental singularity and a regular part:
\begin{align*}
    G(x,y)=\begin{cases}\frac{1}{2\pi}(-\log|x-y|-H(x,y))&\text{if }N=2,\\
    \frac{\Gamma(N / 2)}{2(N-2) \pi^{N / 2}}(|x-y|^{2-N}-H(x,y))&\text{if }N=3.\end{cases}
\end{align*}
The regular part $H(x, y)$ is harmonic in both variables. 
In particular, when $x=y$, $H(x,x)$ is called \textit{Robin's function} of $\Omega$ at $x$. When $x\to \partial \Omega$, it holds $H(x,x)\to +\infty$. The \textit{harmonic radius} $\r_{\Omega}(x)$ of $\Omega$ at $x$ is defined by 
\begin{align*}
\r_{\Omega}(x):= \begin{cases}\exp (-H(x, x)) & \text{if }N=2, \\
H^{\frac{1}{2-N}}(x, x) & \text{if }N \geq 3.\end{cases}
\end{align*}
The subscript of $\r_\Omega$ will be dropped if the domain $\Omega$
is understood without ambiguity.

In two dimensions, the harmonic radius coincides with the \textit{conformal radius} which seems to appear for the first time in the proof of Riemann mapping theorem.  For every simply connected planar domain $\Omega$ whose complement contains at least two points, Riemann mapping theorem asserts that there is a $f$ from $\Omega$ to the unit disk $\mathbb{B}^2$, which is unique up to a rotation. The \textit{Liouville formula} says that $\r_\Omega=(1-|f|^2)/|f'|$. Here $f'$ is the complex derivative. In any dimension $N\geq 2$, the harmonic radius of $\Bn$ and $\Rnp$ are known
\begin{align*}
    \r_{\Bn}(x)=1-|x|^2,\quad\r_{\Rnp}(x)=2x_N.
\end{align*}



Here we list some basic properties of harmonic radius (see \cite{bandle1996harmonic}).
\begin{lemma}\label{lem:r-property} For domains with Lipschitz boundary, the harmonic radius has the following properties.
\begin{enumerate}
\item \textnormal{(Positivity)} For $x\in \Omega$, we have $\r_\Omega(x)>0$ and $\r_\Omega$ is smooth.
\item \textnormal{(Monotonicity)} If $\Omega \subset \widetilde \Omega$, then $\r_\Omega(x)\leq \r_{\widetilde \Omega}(x)$ for any $x\in \Omega$.
\item For $x\in\Omega$, $\delta_\Omega(x)\leq \r_\Omega(x)\leq \text{diam}(\Omega)/2$.
\item If $\Omega$ is convex, then $\r_\Omega\leq 2\delta_\Omega$. If $\r_\Omega=2\delta_\Omega$, then $\Omega$ is a half-space. 
\end{enumerate}
\end{lemma}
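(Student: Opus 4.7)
My plan is to dispatch items (1) and (2) via the standard Green's function calculus, quote \cite{bandle1996harmonic} for the technical upper bound in (3), and devote the main effort to (4), which is the only item requiring a genuinely new idea.

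For (1) I would use that the regular part $H(x,y)$ is harmonic (hence $C^\infty$) separately in each variable on $\Omega\times\Omega$, so $x\mapsto H(x,x)$ is smooth and finite, yielding positivity and smoothness of $\r_\Omega$. For (2), if $\Omega\subset\widetilde\Omega$, then for fixed $x\in\Omega$ the difference $G_{\widetilde\Omega}(x,\cdot)-G_\Omega(x,\cdot)$ is harmonic in $\Omega$ (the singularities at $y=x$ cancel) and nonnegative on $\partial\Omega$; the maximum principle yields $G_\Omega\leq G_{\widetilde\Omega}$ in $\Omega$, hence $H_\Omega(x,x)\geq H_{\widetilde\Omega}(x,x)$, and the fact that $\r$ is a decreasing function of $H(x,x)$ (in both cases $N=2$ via $e^{-H}$ and $N\geq3$ via $H^{1/(2-N)}$ with $2-N<0$) gives $\r_\Omega(x)\leq\r_{\widetilde\Omega}(x)$.

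For (3), the lower bound follows from $B(x,\delta_\Omega(x))\subset\Omega$, (2), and the direct calculation $\r_{B(x,R)}(x)=R$ read off from the explicit Green's function of a ball. The upper bound $\r_\Omega(x)\leq\mathrm{diam}(\Omega)/2$ is the classical Bandle--Flucher estimate obtained by enclosing $\Omega$ in an appropriate comparison body and invoking (2); since only its qualitative content is used in the sequel, I would simply quote \cite{bandle1996harmonic} rather than reproduce the computation.

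For (4), the inequality is the easier half: fix $x\in\Omega$, pick $y^*\in\partial\Omega$ realising $|x-y^*|=\delta_\Omega(x)$, and use convexity to produce a supporting hyperplane $\pi$ of $\Omega$ at $y^*$. The optimality of $y^*$ forces $x-y^*\perp\pi$, so the open half-space $H\supset\Omega$ bounded by $\pi$ satisfies $d(x,\partial H)=\delta_\Omega(x)$; since $H$ is isometric to $\Rnp$ and $\r_{\Rnp}(y)=2y_N$, (2) yields $\r_\Omega(x)\leq\r_H(x)=2\delta_\Omega(x)$. The rigidity is the main obstacle, and I would argue via the strong maximum principle: assuming $\r_\Omega(x_0)=2\delta_\Omega(x_0)$ at some $x_0$, with $H$ the associated half-space, the function $\psi(y):=H_\Omega(x_0,y)-H_H(x_0,y)$ is harmonic in $y\in\Omega$, nonnegative inside (by the Green's function comparison of (2)) with nonnegative boundary values $G_H(x_0,\cdot)/c_N\geq 0$ on $\partial\Omega$, and vanishes at $y=x_0$ by assumption. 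The strong maximum principle then forces $\psi\equiv 0$ in $\Omega$, hence $G_\Omega(x_0,\cdot)\equiv G_H(x_0,\cdot)$ there; the vanishing of $G_\Omega(x_0,\cdot)$ on $\partial\Omega$ forces $G_H(x_0,\cdot)=0$ on $\partial\Omega$, so $\partial\Omega\subset\partial H$, and connectedness of $\Omega$ together with $\Omega\subset H$ gives $\Omega=H$. The one subtle point is ensuring this Green-function-decomposition argument goes through up to the merely Lipschitz boundary, which is standard for bounded Lipschitz domains and can if necessary be justified by an exhaustion of $\Omega$ by smooth subdomains.
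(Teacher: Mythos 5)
The paper itself offers no proof of this lemma -- it simply refers the reader to \cite{bandle1996harmonic} -- so there is no in-paper argument to compare yours against. Your blind proof is the standard Green's-function comparison argument and is essentially correct. A few small remarks. In (1), separate harmonicity of $H(x,y)$ yields joint smoothness once combined with the symmetry $H(x,y)=H(y,x)$ and elliptic regularity, but for $N\geq3$ positivity of $\r_\Omega$ requires $H(x,x)>0$, not merely finiteness; this follows because $H(x,\cdot)$ is the harmonic extension of the strictly positive boundary data $|x-\cdot|^{2-N}$, hence positive throughout $\Omega$ by the maximum principle. Your proofs of (2) and (3) (lower bound) are the canonical ones; quoting \cite{bandle1996harmonic} for the diameter bound is reasonable since only the qualitative content is used. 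In (4), the inequality argument is correct, but you should add one sentence justifying that the tangent hyperplane to the inscribed ball at the nearest boundary point is in fact a supporting hyperplane of the convex domain (otherwise the convex hull of the ball and a point of $\Omega$ on the wrong side would contain $y^*$ as an interior point, contradicting $y^*\in\partial\Omega$). Your rigidity argument proves a stronger statement than written in the lemma (equality at a single interior point already forces $\Omega=H$); the strong-minimum-principle step and the conclusion $\partial\Omega\subset\partial H\Rightarrow\Omega=H$ via connectedness are both sound for bounded $\Omega$. For unbounded convex domains the weak maximum principle step $\psi\geq0$ needs a Phragm\'en--Lindel\"of justification, which you do not mention, but this is outside the paper's range of application and is handled in the cited reference.
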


\begin{lemma}
     Suppose that $f:\Omega\to \widetilde \Omega$ is a conformal equivalence, then the following transformation rules hold
    \begin{align}
     \Delta \tilde{u} \circ f & =\left|f^{\prime}\right|^{-\frac{N+2}{2}} \Delta\left(\left|f^{\prime}\right|^{\frac{N-2}{2}} u\right),\label{Lap_change} \\
     \tilde{\r} \circ f & =\left|f^{\prime}\right| \r \label{r_change},
    \end{align}
where $|f'|:=|\det Df|^{\frac{1}{N}}$.
\end{lemma}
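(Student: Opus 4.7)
The plan is to prove the two identities in turn, both resting on the defining condition $(Df)^T Df = |f'|^2 I$ of a conformal diffeomorphism.

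For the Laplacian rule (\ref{Lap_change}), the cleanest route is to invoke the conformal covariance of the Yamabe (conformal) Laplacian. Regarding $f$ as an isometry $(\Omega,|f'|^2 g_{\mathrm{eucl}})\to(\tilde\Omega, g_{\mathrm{eucl}})$ and noting that both metrics on $\Omega$ are flat (so the scalar-curvature terms in the Yamabe operator drop out), the general identity
\[
\Delta_{|f'|^2 g_{\mathrm{eucl}}}(v) \;=\; |f'|^{-\frac{N+2}{2}}\,\Delta_{g_{\mathrm{eucl}}}\!\bigl(|f'|^{\frac{N-2}{2}} v\bigr)
\]
applied to $v = \tilde u\circ f$ reads exactly as (\ref{Lap_change}) (after identifying the right-hand $u$ with the weighted pullback of $\tilde u$). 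A hands-on alternative expands $\Delta(\tilde u\circ f)$ by the chain rule to obtain $|f'|^2(\Delta\tilde u)\circ f + (\nabla\tilde u)\circ f\cdot\Delta f$ and then checks that the conformal weight $|f'|^{(N-2)/2}$ is precisely calibrated, via the product rule, to absorb the $\Delta f$ term (which is automatically zero in $N=2$, where $f$ is holomorphic and hence harmonic componentwise).

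For the harmonic-radius identity (\ref{r_change}), the idea is to propagate the Laplacian rule up to the level of the Green's function and then isolate the regular part. Applying (\ref{Lap_change}) to $\tilde u = G_{\tilde\Omega}(\cdot, f(y_0))$, combined with the Dirac-mass change of variables $\delta_{f(y_0)}\circ f = |f'(y_0)|^{-N}\delta_{y_0}$ and the preservation of the Dirichlet boundary condition under $f$, identifies
\[
G_\Omega(x,y) \;=\; |f'(x)|^{\frac{N-2}{2}}|f'(y)|^{\frac{N-2}{2}}\, G_{\tilde\Omega}(f(x),f(y))\qquad (N\ge 3),
\]
with the weightless analogue $G_\Omega(x,y) = G_{\tilde\Omega}(f(x),f(y))$ when $N=2$. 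Substituting the singular/regular decomposition of $G$ and $\tilde G$ and invoking the sharp Möbius distortion identity $|f(x)-f(y)| = (|f'(x)||f'(y)|)^{1/2}|x-y|$, which holds for every conformal diffeomorphism in dimension $N\ge 3$ by Liouville's theorem, makes the fundamental singularities cancel identically and leaves $H_\Omega(x,y) = |f'(x)|^{(N-2)/2}|f'(y)|^{(N-2)/2} H_{\tilde\Omega}(f(x),f(y))$. Setting $y=x$ and applying the definition $\r = H^{1/(2-N)}$ yields $\tilde\r\circ f = |f'|\,\r$. In $N=2$ the parallel computation replaces the distortion identity by the elementary limit $|f(y)-f(x)|/|y-x|\to|f'(x)|$ and produces $H_\Omega(x,x) = \log|f'(x)| + H_{\tilde\Omega}(f(x),f(x))$, whence $\tilde\r\circ f = |f'|\,\r$ after exponentiating.

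The principal technical difficulty I anticipate is the clean cancellation of the fundamental singularity at the diagonal $y=x$. In dimension $N\ge 3$ one really needs the exact Möbius distortion formula rather than a mere Taylor expansion, because the naive expansion error $O(|x-y|^{3-N})$ is not even bounded for $N\ge 4$; verifying the distortion identity reduces, via Liouville's theorem, to a direct check on the generators of the conformal group (translations, dilations, rotations, and inversions). In dimension two the conformal class is far richer, but the Green's function transformation carries no conformal weight at all, so the singular subtraction boils down to a single logarithmic correction and poses no genuine obstacle.
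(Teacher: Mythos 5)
The paper does not give a proof of this lemma; both transformation rules are quoted as established facts, with the harmonic-radius material sourced from \cite{bandle1996harmonic}. Your argument is correct and is the standard one. For \eqref{Lap_change}, using conformal covariance of the (flat, hence scalar-curvature-free) Laplacian between $g_{\mathrm{eucl}}$ and $f^*g_{\mathrm{eucl}}=|f'|^2 g_{\mathrm{eucl}}$, applied to $v=\tilde u\circ f$, gives exactly the stated identity, with the $u$ appearing in \eqref{Lap_change} being the \emph{plain} pullback $\tilde u\circ f$ (the weighted normalization $u=|f'|^{(N-2)/2}\tilde u\circ f$ appears only later, in Lemma \ref{lem:f}, and is a harmless notational shift). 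For \eqref{r_change}, pushing \eqref{Lap_change} through the distributional equation $-\Delta G=\delta$, together with $\delta_{f(y)}\circ f=|f'(y)|^{-N}\delta_y$ and the Dirichlet condition, gives $G_\Omega(x,y)=|f'(x)|^{\frac{N-2}{2}}|f'(y)|^{\frac{N-2}{2}}G_{\tilde\Omega}(f(x),f(y))$ for $N\ge 3$, and you correctly isolate the one genuinely technical point: the sharp M\"obius distortion identity $|f(x)-f(y)|^2=|f'(x)|\,|f'(y)|\,|x-y|^2$ (verified on generators and preserved under composition, hence valid for all conformal diffeomorphisms in $N\ge 3$ by Liouville) is what makes the fundamental singularities cancel \emph{exactly}, rather than merely to leading order, so that the transformation of the Robin function and hence of $\r$ follows cleanly. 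The $N=2$ case, with no conformal weight and the elementary limit $|f(y)-f(x)|/|y-x|\to|f'(x)|$, is as you describe.
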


\section{Proof of conformal invariance and consequence on planar domains}
In this section, we give the proof of Proposition \ref{prop:main-cf} and Theorem \ref{thm:planar}. First, we have the following equalities for conformal equivalence.
\begin{lemma}\label{lem:f}
Suppose that $f:\Omega\to \widetilde \Omega$ is a conformal equivalence. For any $u\in C^\infty_0(\Omega)$, define $\tilde u\in C^\infty_0(\widetilde{\Omega})$ such that $\tilde u\circ f=|f'|^{\frac{2-N}{2}}u$. Then 
\begin{align}
    \int_{\Omega}|\nabla u|^2 dx&=\int_{\widetilde{\Omega}}|\nabla\tilde u|^2d\tilde x,\label{nablau=}\\
    \int_{\Omega} \frac{ |u|^{p}}{\r^s}dx&=\int_{\widetilde{\Omega}}\frac{|\tilde u|^{p}}{\tilde \r^s}d\tilde x.\label{rnu=}
\end{align}
\end{lemma}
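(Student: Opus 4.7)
The plan is to reduce both identities to algebraic manipulations plus the two transformation laws \eqref{Lap_change} and \eqref{r_change}. The only analytic input will be the harmonicity of the conformal factor $|f'|^{(N-2)/2}$, which itself drops out of \eqref{Lap_change} by specializing to a constant $\tilde u$.

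For \eqref{nablau=}, I would set $\psi = |f'|^{(N-2)/2}$ and $w = \tilde u \circ f$, so that the prescribed relation $\tilde u \circ f = |f'|^{(2-N)/2} u$ rearranges to $u = \psi w$ with $w \in C^{\infty}_{0}(\Omega)$. Expanding $|\nabla(\psi w)|^{2}$ and rewriting the cross term as $\tfrac12\nabla(\psi^{2})\cdot\nabla(w^{2})$, integration by parts (permissible because $w$ has compact support) produces
\[
\int_{\Omega}|\nabla u|^{2}\,dx \;=\; \int_{\Omega}\psi^{2}|\nabla w|^{2}\,dx \;-\; \int_{\Omega}\psi(\Delta\psi)w^{2}\,dx.
\]
The second integral vanishes by the harmonicity of $\psi$. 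For the first, conformality $Df^{T}Df = |f'|^{2}I$ gives the pointwise identity $|\nabla w|^{2} = |f'|^{2}|\nabla\tilde u|^{2}\circ f$, so the integrand collapses to $|f'|^{N}|\nabla\tilde u|^{2}\circ f$, and the change of variables $d\tilde x = |f'|^{N}\,dx$ converts it to $\int_{\widetilde\Omega}|\nabla\tilde u|^{2}\,d\tilde x$.

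For \eqref{rnu=}, the approach is purely algebraic. I would substitute $|u|^{p} = \psi^{p}|\tilde u|^{p}\circ f$ together with $\r^{-s} = |f'|^{s}(\tilde\r\circ f)^{-s}$, the latter coming from \eqref{r_change}, and push the integral over to $\widetilde\Omega$ by the Jacobian $d\tilde x = |f'|^{N}\,dx$. The three powers of $|f'|$ combine to $|f'|^{(N-2)p/2 + s - N}$, which vanishes precisely in the regime of the ambient proposition: for $N \geq 3$ this is the critical relation $p = 2^{*}(s) = 2(N-s)/(N-2)$, and for $N = 2$ the exponent is $0 + s - 2 = 0$ since $s = 2$.

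I do not anticipate any serious obstacle. The principal step is the vanishing of $\int \psi(\Delta\psi)w^{2}$, i.e.\ $\Delta\psi = 0$, which is the standard fact that the conformal factor of a conformal map in $\mathbb{R}^{N}$ is harmonic; it comes out of \eqref{Lap_change} by plugging in $\tilde u \equiv 1$, which forces the right-hand side to reduce to a multiple of $\Delta \psi$ while the left-hand side is zero. Everything else is careful bookkeeping of exponents, arranged so that the critical choice $p = 2^{*}(s)$ is exactly what renders the weighted functional conformally invariant under the rescaling prescribed by the lemma.
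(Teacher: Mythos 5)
Your proof is correct, and for \eqref{nablau=} it takes a slightly different route from the paper. The paper's proof converts the Dirichlet energy into $-\int_{\widetilde\Omega}\tilde u\,\Delta\tilde u\,d\tilde x$, then applies \eqref{Lap_change} directly as a substitution rule under the change of variables $d\tilde x=|f'|^N dx$, so all the conformal cancellation happens in one line. You instead decompose $u=\psi w$ with $\psi=|f'|^{(N-2)/2}$ and $w=\tilde u\circ f$, expand $|\nabla(\psi w)|^2$, integrate by parts to pick up a $\psi\,\Delta\psi\,w^2$ term, kill it by the harmonicity of $\psi$ (which you correctly deduce by feeding $\tilde u\equiv 1$ into \eqref{Lap_change}), and then use the conformality relation $(Df)^T Df=|f'|^2 I$ to identify $\psi^2|\nabla w|^2$ with $|f'|^N\big(|\nabla\tilde u|^2\circ f\big)$. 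The net effect is that you unpack \eqref{Lap_change} into two elementary facts (harmonicity of the conformal factor, and pointwise conformality of $Df$) rather than invoking it as a black box; this is a bit longer but arguably more transparent about where the cancellation comes from. For \eqref{rnu=} the two arguments are identical: both are the same bookkeeping of $|f'|$ exponents, with the exponent $\tfrac{(N-2)p}{2}+s-N$ vanishing precisely under the critical relations \eqref{assum-pnu}.

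Two small remarks worth keeping in mind, neither of which is a gap. First, your step $\int_\Omega\nabla\psi\cdot\nabla(\psi w^2)\,dx=-\int_\Omega(\Delta\psi)\,\psi w^2\,dx$ is justified because $w\in C^\infty_0(\Omega)$ and conformal maps are real-analytic (Liouville's theorem for $N\ge 3$, holomorphy for $N=2$), so $\psi$ is smooth on $\Omega$. Second, the pointwise identity $|\nabla w|^2=|f'|^2\bigl(|\nabla\tilde u|^2\circ f\bigr)$ uses $(Df)(Df)^T=|f'|^2 I$; this follows from $(Df)^TDf=|f'|^2 I$ since $Df/|f'|$ is then orthogonal. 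Both points are standard, but they are the places where your argument is leaning on more than pure algebra.
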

\begin{proof} 
Using integration by parts, for $\tilde u\in C^\infty_0(\Omega)$ we have 
\begin{align*}
    \int_{\widetilde\Omega} |\nabla \tilde u|^2d\tilde x=-\int_{\widetilde\Omega}\tilde u\Delta \tilde u d\tilde x.
\end{align*}
Making a change of variables, $d\tilde x=|f'|^N dx$, using \eqref{Lap_change}, we have 
\begin{align*}
    \int_{\widetilde\Omega}\tilde u\Delta \tilde u d\tilde x=\int_{\Omega} |f'|^{\frac{2-N}{2}}u\cdot |f'|^{-\frac{N+2}{2}}\Delta u \cdot |f'|^Ndx=\int_\Omega u\Delta udx.  
\end{align*}
Using \eqref{r_change}, one has 
\begin{align*}
\int_{\widetilde\Omega}  \frac{|\tilde u|^{p}}{\tilde \r^s}d\tilde x=\int_{\Omega}\frac{\left||f'|^{\frac{2-N}{2}}u\right|^{p}}{(|f'|\r)^s}|f'|^Ndx&= \int_{\Omega}\frac{|u|^{p}}{\r^s}|f'|^{-s+\frac{2-N}{2}p+N}dx=\int_{\Omega}\frac{|u|^{p}}{\r^s}dx.
\end{align*}
The proof is complete.
\end{proof}

Now, we give the proof of Proposition \ref{prop:main-cf}.

\begin{proof}[Proof of Proposition \ref{prop:main-cf}]

Define the quotient
\begin{align}\label{def:Ju}
    J^{\Omega}_p[u]=\frac{\int_{\Omega}|\nabla u|^2dx}{\left(\int_{\Omega}\r_\Omega^{-s}|u|^{p}dx\right)^{2/p}}.
\end{align}
Suppose $\Omega$ and $\widetilde\Omega$ are conformally equivalent, Lemma \ref{lem:f} implies that $J^\Omega_p[u]=J^{\tilde \Omega}_p[\tilde u]$ for any $u\in C^\infty_0(\Omega)$ and its counterparts $\tilde u\in C^\infty_0(\Omega)$. Thus $\sigma_p(\Omega)\geq \sigma_p(\widetilde\Omega)$. The reverse inequality holds for the same reason.  Therefore, $\sigma_p(\Omega)=\sigma_p(\widetilde\Omega)$.

To prove the second part, we only need to show if $\mu_p(\Omega)$ is attained for some convex $\Omega$ with $\sigma_p(\Omega)=\sigma_p(\Rnp)$, then it must be a half-space. Since $\Omega$ is convex, it holds $\r_\Omega\leq 2\delta_\Omega$ according to (4) of Lemma \ref{lem:r-property}, which implies that 
\begin{align}\label{Jleq}
J^{\Omega}_p[u]=\frac{\int_{\Omega}|\nabla u|^2dx}{\left(\int_{\Omega}\r_\Omega^{-s}|u|^{p}dx\right)^{2/p}}\leq 2^{\frac{2s}{p}}\frac{\int_{\Omega}|\nabla u|^2dx}{\left(\int_{\Omega}\delta_\Omega^{-s}|u|^{p}dx\right)^{2/p}},
\end{align}
here we have used the fact that $s\geq 0$. Taking the infimum on $u\in C^\infty_0(\Omega)$, it leads to $\sigma_p(\Omega)\leq 2^{\frac{2s}{p}} \mu_p(\Omega)$. By our assumption, (2) of Theorem \ref{thm:WangZhu2} and the fact that $\sigma_p(\Rnp)=2^{\frac{2s}{p}}\mu_p(\Rnp)$, we have 
\begin{align*}
    \sigma_p(\Rnp)=\sigma_p(\Omega)\leq 2^{\frac{2s}{p}}\mu_p(\Omega)\leq 2^{\frac{2s}{p}}\mu_p(\Rnp)=\sigma_p(\Rnp).
\end{align*}
Thus all inequalities must be equalities. Thus if $\mu_p(\Omega)$ is achieved by some $u\in H^1_0(\Omega)$, then $\sigma_p(\Omega)$ can also be achieved by this $u$.  Thus \eqref{Jleq} implies that $\r(x)=2\delta(x)$ for any $x\in \Omega$. This implies that $\Omega$ must be a half-space by (4) of Lemma \ref{lem:r-property}. 
\end{proof}

Next, we give the proof of Theorem \ref{thm:planar}.

\begin{proof}[Proof of Theorem \ref{thm:planar}]
To prove (1), Riemann mapping theorem implies that $\sigma_p(\Omega)=\sigma_p(\mathbb{R}^2_+)$ for any simply connected Lipschitz hyperbolic region $\Omega$. Then we only need to show $\sigma_2(\mathbb{R}^2_+)=1$. In the planar case, one has $s=2$. Note that $\sigma_p(\Rnp)=2^{\frac{2s}{p}}\mu_p(\Rnp)$ and $\mu_2(\Rnp)=1/4$, the conclusion is obvious.

To prove (2).  Since a convex planar domain with non-empty boundary must be simply connected Lipschitz hyperbolic region, then part (1) shows that $\sigma_p(\Omega)=\sigma_p(\mathbb{R}^2_+)$. It follows from Proposition \ref{prop:main-cf} that $\Omega$ is a half-plane. When $p=2$,  $\mu_2(\Omega)$ is the Hardy constant $\mu^\text{H}(\Omega)$, which is not attained for half-plane. If $p>2$, Theorem \ref{thm:WangZhu2} implies that $\mu_p(\mathbb{R}^2_+)$ is achieved. 
\end{proof}

\section{Proof of Theorem \ref{thm:H<0}}\label{sec:H<0}
Fixing any point $P$ on $\partial \Omega$, we assume that $\partial\Omega$ is $C^2$ near to it. Throughout this section, let $P$ be the origin $0\in \Rn$ for simplicity. One has the so-called \textit{Fermi coordinates} on a neighborhood $\mathcal{O}$ of the origin in $\Omega$, say $\Phi\in C^2$, which maps  $B_{2\rho_0}^{+}=\left\{(x', x_N):|x|<2\rho_0, x_N>0\right\}$  to $\mathcal{O}$, for instance, see \cite[section 3]{escobar1992conformal}. More precisely, $y=\Phi(x)=\left(\Phi_{1}(x), \cdots, \Phi_{N}(x)\right)$ for $x\in B_{2\rho_0}^{+}$ form a set of coordinates near $0\in \partial \Omega$ with the following two properties. First, $\left(x_1, \cdots, x_{N-1}\right)$ is normal coordinates on $\partial \Omega$ at the point $0$. Second, the geodesic leaving from $\left(x_1, \cdots, x_{N-1}\right)$ in the orthogonal direction to $\partial \Omega$ is parameterized by $x_N$. In these coordinates, $(\mathcal{O},g_E)$ is isometric to $(B_{2\rho_0}, g)$ where 
$$
g:=\Phi^*g_E=d x_N^2+g_{i j}(x',x_N) d x_i d x_j
$$
where the index $i,j$ run from $1$ to $N-1$. Repeated index means summation. We have the following expansion formula of the metric $g$ from \cite{escobar1992conformal}.
\begin{lemma}\label{lem:exp-gE} Let $(g^{ij})$ be the inverse of the metric $(g_{ij})$. For $|x|$ small, we have
\begin{enumerate}
    \item $g^{i j}(x',x_N)=\delta^{i j}+2 h_{i j} x_N+o(|x|),$
    \item $\sqrt{\operatorname{det}\left(g_{i j}\right)}(x,x_N)=1-H x_N+o(|x|)$,
\end{enumerate}
where $\left\{h_{i j}\right\}$ is the second fundamental form at 0, $H=\text{tr}\,h$ is the mean curvature at 0.
\end{lemma}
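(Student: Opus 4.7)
The plan is to derive the two expansions from standard differential-geometric facts about Fermi coordinates, which the statement and its preamble already set up. The key inputs I would use are: (a) the Gauss lemma, which forces the product form $g = dx_N^2 + g_{ij}(x', x_N)\,dx_i\,dx_j$ with no cross terms $dx_i\,dx_N$, because along each normal geodesic $t \mapsto \Phi(x', t)$ the tangent $\partial_N$ is a unit vector orthogonal to every slice $\{x_N = \text{const}\}$; and (b) the fact that $(x_1,\ldots,x_{N-1})$ are geodesic normal coordinates on $\partial\Omega$ centered at the origin, so the induced metric at $x_N = 0$ has the standard normal-coordinate expansion $g_{ij}(x',0) = \delta_{ij} + O(|x'|^2)$, the quadratic remainder being governed by the intrinsic Riemann tensor of $\partial\Omega$ at $0$.

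For the first-order behavior in $x_N$, I would compute $\partial_{x_N} g_{ij}|_{x_N = 0}$ and identify it with $-2 h_{ij}$, where $h_{ij}$ is the second fundamental form of $\partial\Omega$ relative to the inward unit normal $\partial_N$. This is essentially a Weingarten-type identity: differentiating $g_{ij}(x', x_N) = \langle \Phi_* \partial_i, \Phi_* \partial_j\rangle$ in $x_N$ at zero produces $-2\langle \mathrm{II}(\partial_i,\partial_j),\partial_N\rangle$. Taylor-expanding and absorbing the $O(|x'|^2)$ and $O(x_N^2)$ remainders into $o(|x|)$ yields
\begin{align*}
g_{ij}(x', x_N) = \delta_{ij} - 2 h_{ij}\, x_N + o(|x|),
\end{align*}
whose first-order matrix inverse is precisely $g^{ij} = \delta^{ij} + 2 h_{ij}\, x_N + o(|x|)$, giving part (1).

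For part (2), I would apply the first-order determinant expansion $\det(I + A) = 1 + \mathrm{tr}\,A + O(\|A\|^2)$ to the above to get $\det g_{ij} = 1 - 2 H x_N + o(|x|)$ with $H = \mathrm{tr}\,h$, and then use $\sqrt{1+t} = 1 + t/2 + O(t^2)$ to extract $\sqrt{\det g_{ij}} = 1 - H x_N + o(|x|)$. The only point requiring genuine care is the sign in the identification $\partial_{x_N} g_{ij}|_{x_N=0} = -2 h_{ij}$; this is pinned down by the convention that $\partial_N$ is the inward unit normal, so that $H$ is positive on balls and convex domains, consistent with the usage in Remark \ref{rmk:n>2} and Theorem \ref{thm:H<0}. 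Once that convention is fixed, the remainder of the argument is a routine expansion.
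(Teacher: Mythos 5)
The paper offers no proof of Lemma~\ref{lem:exp-gE}; it imports the expansion directly from \cite{escobar1992conformal}. Your derivation reproduces the standard argument behind that result and is correct: writing $\Phi(x',x_N)=\sigma(x')+x_N\nu(\sigma(x'))$ (the normal geodesics are Euclidean lines), one has $g_{ij}=\tilde g_{ij}(x')-2x_N\,h_{ij}(x')+x_N^2\,(\mathrm{III})_{ij}(x')$, and the Weingarten identity $\partial_{x_N}g_{ij}\big|_{x_N=0}=2\langle\nabla_{\partial_i}\partial_N,\partial_j\rangle=-2h_{ij}(0)$, together with normality of the tangential coordinates at $0$, gives $g_{ij}=\delta_{ij}-2h_{ij}x_N+o(|x|)$; matrix inversion and $\sqrt{\det(I+A)}=1+\tfrac12\mathrm{tr}\,A+O(\|A\|^2)$ then yield parts (1) and (2). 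Your sign check against convexity is the right sanity test and is consistent with the paper's usage.

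One small caveat, worth noting because the paper is deliberate about regularity. Under only a $C^2$ boundary (the hypothesis in Theorem~\ref{thm:H<0}), the induced metric $\tilde g_{ij}$ on $\partial\Omega$ is merely $C^1$ and $h_{ij}(\cdot)$ merely $C^0$. So the normal-coordinate remainder on the slice is $o(|x'|)$, not the Riemann-tensor $O(|x'|^2)$ term you invoke, and the coefficient of $x_N$ is $h_{ij}(0)$ only up to an $o(1)$ error in $x'$. Both discrepancies are still absorbed into $o(|x|)$, so your conclusion is unaffected; but the assertion about the quadratic remainder being governed by $\bar R_{ikjl}$ requires $C^3$ regularity, and it is precisely that refinement (with the $C^3$ hypothesis) that the paper isolates later as Lemma~\ref{lem:exp-gE=0}.
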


 Suppose that $\varphi(\rho)$ is a smooth decreasing function of $\rho$, which satisfies $\varphi(\rho)=1$ for $\rho \leq \rho_0$,  $\varphi(\rho)=0$ for $\rho \geqslant 2 \rho_0$ and $\left|\varphi^{\prime}(\rho)\right| \leq 2 \rho_0^{-1}$ for $\rho_0 \leq \rho \leq 2 \rho_0$. Let 
 \begin{align}\label{def:u}
u(x)=\varphi(|x|)U_{\varepsilon}(x),
\end{align}
where $U_\varepsilon(x)=\varepsilon^{1-\frac{N}{2}}U(\frac{x}{\varepsilon})$ and $U$ is the extremal function of $\mu_p(\Rnp)$ defined in Theorem \ref{thm:WangZhu}. Via Fermi coordinates, one can define $\tilde u(y)= u(\Phi^{-1}(y))$, which is well defined for any $y\in \Omega$ because of the support of the $\varphi$. 

\begin{lemma}\label{lem:F-1}
Suppose that $\rho_0$ is a fixed small number and $u$ is defined through \eqref{def:u}. For $\varepsilon$ sufficiently small, we have
\begin{align}\label{def:I1}
\int_{\Omega}|\nabla \tilde u|^2_{g_E}dvol_{g_E}=\int_{B_{2\rho_0}^{+}}|\nabla u|^2_{g}dvol_{g}=\int_{\Rnp}|\nabla U|^2dx+\varepsilon H I_1+o(\varepsilon),
\end{align}
where $I_1=\int_{\Rnp}[-x_N|\nabla U|^2+2x_N(\partial_{x_1}U)^2]dx$.
\end{lemma}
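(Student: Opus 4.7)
The first equality $\int_\Omega|\nabla\tilde u|^2_{g_E}\,dvol_{g_E}=\int_{B_{2\rho_0}^+}|\nabla u|^2_g\,dvol_g$ is a direct change of variables since $\Phi:(B_{2\rho_0}^+,g)\to(\mathcal O,g_E)$ is an isometry and $\tilde u=u\circ\Phi^{-1}$. So the substance is in the second equality.

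The plan is to expand the integrand $|\nabla u|_g^2\sqrt{\det g_{ij}}$ in Fermi coordinates using Lemma \ref{lem:exp-gE}, then rescale by $\varepsilon$. Concretely,
\begin{equation*}
|\nabla u|_g^2\sqrt{\det g_{ij}}=\bigl[g^{ij}\partial_i u\,\partial_j u+(\partial_N u)^2\bigr]\sqrt{\det g_{ij}},
\end{equation*}
and combining the two expansions in Lemma \ref{lem:exp-gE} yields
\begin{equation*}
|\nabla u|_g^2\sqrt{\det g_{ij}}=|\nabla u|^2+x_N\bigl(2h_{ij}\partial_i u\,\partial_j u-H|\nabla u|^2\bigr)+R(x),
\end{equation*}
where $|R(x)|\le o(|x|)|\nabla u|^2$. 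The Euclidean leading part handles itself: since $u=\varphi(|x|)U_\varepsilon$ and $\varphi\equiv 1$ on $B_{\rho_0}$, scaling $x=\varepsilon y$ and using the gradient decay $|\nabla U(y)|\le C(|y|+1)^{-N}$ from Lemma \ref{lem:U} give
\begin{equation*}
\int_{B_{2\rho_0}^+}|\nabla u|^2\,dx=\int_{\Rnp}|\nabla U|^2\,dy+O(\varepsilon^N),
\end{equation*}
where the error absorbs both the $\varphi'$ contribution and the tail $\int_{|y|>\rho_0/\varepsilon}|\nabla U|^2$, both of which are $o(\varepsilon)$.

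For the first-order correction I would rescale again: setting $x=\varepsilon y$ picks up a factor $\varepsilon$ from $x_N\mapsto\varepsilon y_N$ (the Jacobian and the $\varepsilon^{1-N/2}$ from $U_\varepsilon$ combine to be scale-neutral for the $|\nabla U|^2$-density), producing
\begin{equation*}
\varepsilon\int_{\Rnp}y_N\bigl(2h_{ij}\partial_i U\,\partial_j U-H|\nabla U|^2\bigr)\,dy+o(\varepsilon),
\end{equation*}
where the $o(\varepsilon)$ catches both the $R(x)$ remainder and the cutoff terms after using $|U(y)|\le Cy_N(|y|+1)^{-N}$ and $|\nabla U(y)|\le C(|y|+1)^{-N}$ to verify integrability of $y_N|\nabla U|^2$ (which needs $N\ge 2$ at the tail; the integral is clearly finite near $0$). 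The key reduction is the cross term: by the cylindrical symmetry $U(x',x_N)=U(|x'|,x_N)$ from Lemma \ref{lem:U}, the off-diagonal integrals $\int y_N\partial_i U\,\partial_j U\,dy$ vanish for $i\neq j$ (odd in $y_i$ or $y_j$), while the diagonal ones are all equal to $A:=\int_{\Rnp}y_N(\partial_{y_1}U)^2\,dy$. Hence
\begin{equation*}
2h_{ij}\!\int y_N\partial_i U\,\partial_j U=2A\sum_i h_{ii}=2AH.
\end{equation*}
Collecting, the first-order term equals $\varepsilon H\bigl[2A-\int y_N|\nabla U|^2\bigr]=\varepsilon H\,I_1$, as claimed.

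The main technical nuisances — not deep, but requiring care — are (i) verifying that the gradient tail $\int_{|y|>\rho_0/\varepsilon}|\nabla U|^2$ and the $\varphi'$-contributions are genuinely $o(\varepsilon)$ using the sharp decay \eqref{u-bound}, and (ii) controlling the $o(|x|)$ remainder uniformly after rescaling so that it does not contribute at order $\varepsilon$. Both come down to matching the polynomial decay of $U$ and $\nabla U$ against the extra $|x|$ factor, which is precisely what the bounds in Lemma \ref{lem:U} are designed for.
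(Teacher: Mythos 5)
Your proposal is correct and follows essentially the same route as the paper's proof: expand $|\nabla u|_g^2\sqrt{\det g}$ via Lemma \ref{lem:exp-gE}, rescale by $\varepsilon$ using the decay bounds \eqref{u-bound}, and then reduce $h_{ij}\int x_N u_i u_j$ to $H\int x_N(\partial_{x_1}U)^2$ by cylindrical symmetry. The only (cosmetic) difference is that you combine the $g^{ij}$ and $\sqrt{\det g}$ expansions into a single integrand before integrating, whereas the paper factors out $\sqrt{\det g}$ and expands it separately for each term; the symmetry reduction, error estimates, and cutoff/tail control are the same.
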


\begin{proof}
The first equal sign is obvious because $\Phi$ is an isomorphism. Let us first compute the integral in $B_{\rho_0}^+$.
From the expression of $g=\Phi^*g_E$ and Lemma \ref{lem:exp-gE}, we have
\begin{align}
\int_{B_{\rho_0}^{+}}|\nabla u|_{g}^2 dvol_{g} & =\int_{B_{\rho_0}^{+}}\left(g^{i j} u_i u_j+u_N^2\right) dvol_{g} =\int_{B_{\rho_0}^{+}}\left(g^{i j} u_i u_j+u_N^2\right) \sqrt{\operatorname{det}(g)} dx\notag\\
&=\int_{B_{\rho_0}^{+}}|\nabla u|^2 \sqrt{\operatorname{det}(g)} dx+2 h_{i j} \int_{B_{\rho_0}^{+}}x_Nu_i u_j \label{energy-1}\sqrt{\det(g)}dx+o(e),
\end{align}
where $u_i=\partial_{x_i} u$, $|\nabla u|^2=u_1^2+\cdots+u_{N-1}^2+u_N^2$ and 
\begin{align*}
e:=\int_{B_{\rho_0}^+}|x||\nabla u|^2 \sqrt{\det(g)} dx.
\end{align*}
For the first term on the right hand side of \eqref{energy-1}, we apply Lemma \ref{lem:exp-gE} again to get 
\begin{align}
\begin{split}\label{1-1}
\int_{B_{\rho_0}^{+}}|\nabla u|^2 \sqrt{\operatorname{det}(g)} dx  = &\,\int_{B_{\rho_0}^{+}}|\nabla u|^2 dx-H \int_{B_{\rho_0}^{+}} x_N|\nabla u|^2 dx+o(e).
\end{split}
\end{align}
 To deal with the first term on the right-hand side of the above equality, 
using the decay estimates of $U$ in \eqref{u-bound}, we have 
\begin{align}
\begin{split}\label{1-1-1}
    \int_{B_{\rho_0}^{+}}|\nabla u|^2 dx=\int_{B_{\rho_0/\varepsilon}^+}|\nabla U|^2dx&=\int_{\Rnp}|\nabla U|^2dx-\int_{\Rnp\setminus B_{\rho_0/\varepsilon}^+}|\nabla U|^2dx\\
    &=\int_{\Rnp}|\nabla U|^2dx+O(\varepsilon^N\rho_0^{-N}).
    \end{split}
\end{align}
Similarly, we estimate the second term on the right-hand side of \eqref{1-1} as the following
\begin{align*}
    \int_{B_{\rho_0}^{+}}x_N|\nabla u|^2 dx=\varepsilon\int_{B_{\rho_0/\varepsilon}^+}x_N|\nabla U|^2dx&=\varepsilon\int_{\Rnp}x_N|\nabla U|^2dx+O(\varepsilon^N\rho_0^{1-N}).
\end{align*}
To deal with the last term on the right-hand side of \eqref{1-1}, we notice that on $B_{\rho_0}^{+}$,
$$
|\nabla u|^2(x) \leq C \varepsilon^{-N}\left(\frac{|x|}{\varepsilon}+1\right)^{-2 N},
$$
which leads to 
\begin{align*}
\begin{split}
 e&\leq C\varepsilon \int_{B_{\rho_0 / \varepsilon}^{+}}|x|(1+|x|)^{-2 N} dx 
 \leq C  \varepsilon.
\end{split}
\end{align*}
Now we insert the previous three estimates to \eqref{1-1} and  $\varepsilon^N\rho_0^{-N}=o(\varepsilon)$ to obtain
\begin{align}\label{1-final}
\begin{split}
\int_{B_{\rho_0}^{+}}|\nabla u|^2 \sqrt{\operatorname{det}(g)} dx  =&\  \int_{\Rnp} |\nabla U|^2 dx- \varepsilon H  \int_{\Rnp} x_N|\nabla U|^2 dx +o(\varepsilon).
\end{split}
\end{align}
For the second integral on the right-hand side in \eqref{energy-1}, we estimate  
\begin{align*}
\int_{B_{\rho_0}^{+}} x_N u_i u_j \sqrt{\operatorname{det}(g)} dx &= \varepsilon\int_{B_{\rho_0/\varepsilon}^{+}} x_N\,U_iU_j dx+ o(\varepsilon)=\varepsilon \int_{\Rnp} x_N\, U_i U_j dx+o(\varepsilon).
\end{align*}
The symmetries of the half-ball and $U$ imply
\begin{align}\label{2-final}
h_{ij}\int_{B_{\rho_0}^{+}} x_N u_i u_j \sqrt{\operatorname{det}(g)} dx =\varepsilon H \int_{\Rnp} x_N (\partial_{x_1}U)^2 dx+O(\varepsilon).
\end{align}
Consolidating the results in \eqref{1-final} and \eqref{2-final}, we have a bound of \eqref{energy-1},
\begin{align}
\begin{split}\label{inner-energy}
\int_{B_{\rho_0}^+}|\nabla u|_{g}^2 dvol_{g} & = \int_{\Rnp} |\nabla U|^2 dx- \varepsilon H  \int_{\Rnp} x_N|\nabla U|^2 dx \\
&\quad + 2\varepsilon H\int_{\Rnp}x_N(\partial_{x_1}U)^2dx+o(\varepsilon).
\end{split}
\end{align}
On $B_{2 \rho_0}^{+}-B_{\rho_0}^{+}$, we observe that
\begin{align*}
    |\nabla u|_{g}^2 \leq C|\nabla u|^2 \leqslant C\left(\varphi^2  \cdot\left|\nabla U_{\varepsilon}\right|^2+|\nabla \varphi|^2  U_{\varepsilon}^2\right).
\end{align*}
Using the estimates of $U$ and the facts that $0 \leqslant \varphi \leqslant 1$ and $\left|\nabla \varphi\right| \leqslant 2 / \rho_0$, we get
\begin{align}\label{ring-energy}
\int_{B_{2 \rho_0}^{+}-B_{\rho_0}^{+}}|\nabla u|_{g}^2 dvol_{g} \leqslant C \left(\frac{\rho_0}{\varepsilon}\right)^{-N}=C  \varepsilon^N  \rho_0^{-N}=o(\varepsilon).
\end{align}
Combining \eqref{inner-energy}, \eqref{ring-energy}, we have 
\begin{align*}
    \int_{\Omega}|\nabla u|_{g}^2 dvol_{g}= \int_{\Rnp}|\nabla U|^2dx+\varepsilon H I_1+o(\varepsilon),
\end{align*}
where $I_1$ is defined in \eqref{def:I1}. The proof is complete.
\end{proof}

\begin{lemma}\label{lem:F-2}
Suppose that $\rho_0$ is a fixed small number and $u$ is defined through \eqref{def:u}. For $\varepsilon$ sufficiently small, one has 
\begin{align}\label{def:I2}
\int_{\Omega}\frac{ \tilde u^{p}}{\delta_\Omega^s}dvol_{g_E}=\int_{B_{2\rho_0}^+}\frac{u^p}{x_N^s}dvol_g=\int_{\Rnp}x_N^{-s}U^{p}dx-\varepsilon HI_2+o(\varepsilon),
\end{align}
where $I_2=\int_{\Rnp}x_N^{-s+1}U^pdx$.
\end{lemma}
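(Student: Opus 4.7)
\textbf{Plan for proving Lemma \ref{lem:F-2}.} The proof will parallel that of Lemma \ref{lem:F-1}, with the key difference that we are estimating a weighted $L^p$-norm rather than a Dirichlet energy. For the first equality, I would use that $\Phi$ is an isometry together with the fact that in Fermi coordinates $\delta_\Omega(\Phi(x))=x_N$ for $|x|\leq 2\rho_0$ when $\rho_0$ is small enough: since $\Omega\subset\Rn$ is flat, the closest point of $\partial\Omega$ to $\Phi(x)$ is reached along the normal geodesic parametrised by $x_N$, so the geodesic distance in $g$ (which equals the Euclidean distance here) agrees with $x_N$.

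\textbf{Main expansion.} I would split the integral over $B^+_{\rho_0}$, where $\varphi\equiv 1$ and $u=U_\varepsilon$, from the transition region $B^+_{2\rho_0}\setminus B^+_{\rho_0}$. On $B^+_{\rho_0}$ I plug in Lemma \ref{lem:exp-gE}(2), $\sqrt{\det g}=1-Hx_N+o(|x|)$, and perform the change of variables $x=\varepsilon y$. The scaling exponent of $\int U_\varepsilon^p x_N^{-s}\,dx$ is
\begin{align*}
\left(1-\tfrac{N}{2}\right)p-s+N,
\end{align*}
which equals zero exactly because $p=2^*(s)$ (and trivially when $N=2$, $s=2$). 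Hence the leading term rescales to $\int_{B^+_{\rho_0/\varepsilon}}U^p y_N^{-s}\,dy\to\int_{\Rnp}U^p y_N^{-s}\,dy$, while the $-Hx_N$ correction contributes one extra factor of $\varepsilon$ from $x_N=\varepsilon y_N$ and produces $-\varepsilon H\int_{\Rnp}U^p y_N^{1-s}\,dy=-\varepsilon H I_2$.

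\textbf{Error terms and main obstacle.} Three remainders need to be bounded by $o(\varepsilon)$: the tail of the rescaled leading integral outside $B^+_{\rho_0/\varepsilon}$; the contribution of the annulus $B^+_{2\rho_0}\setminus B^+_{\rho_0}$, where $0\leq\varphi\leq 1$ and $u\leq U_\varepsilon$; and the $o(|x|)$ remainder from the volume expansion. The first two reduce to the pointwise decay $U(y)\leq C y_N(1+|y|)^{-N}$ from Lemma \ref{lem:U} together with the tail integrability of $U^p y_N^{-s}$, which holds because $p=2^*(s)$ forces $p(N-1)>N-s$. The most delicate piece, and the main technical obstacle, is the $o(|x|)$ remainder: writing $\sqrt{\det g}-(1-Hx_N)=|x|\eta(|x|)$ with $\eta(r)\to 0$ as $r\to 0$, I would split $B^+_{\rho_0/\varepsilon}$ after rescaling into $\{|y|\leq\varepsilon^{-1/2}\}$ (on which $\eta(\varepsilon|y|)\to 0$ uniformly) and $\{|y|>\varepsilon^{-1/2}\}$ (on which the tail of $\int U^p|y|y_N^{-s}\,dy$ is small), yielding a total of $\varepsilon\cdot o(1)=o(\varepsilon)$. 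Verifying the convergence of $\int_{\Rnp}U^p|y|y_N^{-s}\,dy$ reduces, via the decay of $U$, to the inequality $p(N-1)>N-s+1$, which again is routine under \eqref{assum-pnu}. Collecting all contributions gives the claimed expansion.
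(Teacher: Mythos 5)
Your proposal is correct and follows essentially the same route as the paper: the first equality comes from the Fermi-coordinate identity $\delta_\Omega=x_N$, the leading term and the $-\varepsilon HI_2$ correction come from the volume expansion of Lemma~\ref{lem:exp-gE}(2) after the rescaling $x=\varepsilon y$ (whose exponent vanishes precisely because $p=2^*(s)$), and the error terms are controlled by the decay bound \eqref{u-bound}. One small caution: for the tail of the leading integral and for the annulus region you need these contributions to be $o(\varepsilon)$, not merely finite --- ``tail integrability'' alone is not enough; you must use the quantitative polynomial decay $U(y)\le Cy_N(1+|y|)^{-N}$ to get a tail of order $(\rho_0/\varepsilon)^{-Np/2}$, which is $o(\varepsilon)$ since $p>2$. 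Your explicit $\{|y|\lessgtr\varepsilon^{-1/2}\}$ splitting for the $o(|x|)$ remainder is a valid way to carry out what the paper compresses into the statement that $\int_{B^+_{\rho_0/\varepsilon}}|y|\,y_N^{-s}U^p\,dy$ is uniformly bounded (so by dominated convergence the weighted integral against $\eta(\varepsilon|y|)\to0$ vanishes, giving $o(\varepsilon)$); both are the same dominated-convergence idea.
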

\begin{proof}
In the Fermi coordinates, the distance function $\delta_\Omega=x_N$. This implies the first equal sign in \eqref{def:I2}.
Applying Lemma \ref{lem:exp-gE}, we get 
\begin{align}\label{energy-2}
\begin{split}
\int_{B_{\rho_0}^+} \frac{u^{p}}{x_N^s} dvol_{g}
=&  \int_{B_{\rho_0}^{+}} \frac{u^{p}}{x_N^s} \sqrt{\operatorname{det}(g)} dx \\
= &\int_{B_{\rho_0 / \varepsilon}^{+}} x_N^{-s} U^{p} dx-\varepsilon H \int_{B_{\rho_0 / \varepsilon}^{+}} x_N^{-s+1} U^{p} dx-o\left(\varepsilon\int_{B_{\rho_0 / \varepsilon}^{+}} |x|^{1-s} U^{p} dx\right).
\end{split}
\end{align}
The last term can be estimated by applying \eqref{u-bound},
\begin{align*}
\begin{split}
\int_{B_{\rho_0 / \varepsilon}^{+}} |x|^{1-s} U^{p} dx&\leq C\int_{B_{\rho_0 / \varepsilon}^{+}}(1+|x|)^{1-s-(N-1)p}dx\leq C\int_0^{\rho_0/\varepsilon}(1+r)^{-\frac{N}{2}p}dr\leq C,
\end{split}
\end{align*}
where we have used the fact $p>2$.
Similarly, for the left terms on the right hand side of \eqref{energy-2}, we have 
\begin{align*}
    \int_{B_{\rho_0 / \varepsilon}^{+}} x_N^{-s} U^{p} dx=&\int_{\Rnp} x_N^{-s} U^{p} dx-\int_{\Rnp\backslash B_{\rho_0 / \varepsilon}^{+}} x_N^{-s} U^{p} dx\\
    =&\int_{\Rnp} x_N^{-s} U^{p} dx+O \left(\varepsilon^{\frac{Np}{2}}\rho_0^{-\frac{Np}{2}}\right),
\end{align*}
and 
\begin{align*}
    \int_{B_{\rho_0 / \varepsilon}^{+}} x_N^{-s+1} U^{p} dx=\int_{\Rnp} x_N^{-s+1} U^{p} dx+O\left(\varepsilon^{\frac{Np}{2}-1}\rho_0^{1-\frac{Np}{2}}\right).
\end{align*}
On $B_{2 \rho_0}^{+}-B_{\rho_0}^{+}$, using \eqref{u-bound}, we observe that
\begin{align*}
\begin{split}
\int_{B_{2 \rho_0}^{+}-B_{\rho_0}^{+}} \frac{u^{p}}{x_N^s} dvol_{g} \leqslant C  \int_{B_{2 \rho_0}^{+}-B_{\rho_0}^{+}} x_N^{-s} U_{\varepsilon}^{p} dx&=C  \int_{B_{2 \rho_{0 }/\varepsilon}^{+}-B_{\rho_0 / \varepsilon}^{+}} x_N^{-s}(1+|x|)^{(1-N)p} dx \\
& =C \left(\frac{\rho_0}{\varepsilon}\right)^{-\frac{Np}{2}}=o(\varepsilon).  
\end{split}
\end{align*}
Back to \eqref{energy-2}, we have 
\begin{align*}
\int_{B_{2\rho_0}^+}\frac{u^{p}}{x_N^s}dvol_{g}= \int_{\Rnp} x_N^{-s}U^{p}dx-\varepsilon HI_2+o(\varepsilon),
\end{align*}
where $I_2$ is defined in \eqref{def:I2}. The proof is complete.
\end{proof}

We have the following Pohozaev-type identities.

\begin{lemma}\label{lem:Poh}
Suppose $U$ is defined in Theorem \ref{thm:WangZhu}, $I_1$ is defined in \eqref{def:I1}, and $I_2$ is defined in \eqref{def:I2}. One has
\begin{align*}
I_1+\frac{2(N-1)-s}{(N-1)p}I_2=0.
\end{align*}
\end{lemma}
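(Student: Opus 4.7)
The identity is a consequence of two Pohozaev-type identities for the Euler--Lagrange equation satisfied by $U$. By Theorem~\ref{thm:WangZhu} (after a harmless rescaling), $U$ is a positive solution of $\Delta U + x_N^{-s}U^{p-1}=0$ in $\Rnp$ with $U=0$ on $\partial\Rnp$, is cylindrical in $x'$ by Lemma~\ref{lem:U}(1), and enjoys the decay bounds $|U(x)|\leq C x_N(1+|x|)^{-N}$, $|\nabla U(x)|\leq C(1+|x|)^{-N}$ of Lemma~\ref{lem:U}(2). All integration-by-parts manipulations below are justified by these bounds together with $U|_{x_N=0}=0$; the resulting boundary contributions at $\{x_N=0\}$ and at infinity all vanish.

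\textbf{Two multipliers.} First, multiply the equation by $x_N U$ and integrate over $\Rnp$. After an integration by parts and using $\int U\,\partial_{x_N}U\,dx=\tfrac12\int\partial_{x_N}(U^2)\,dx=0$, one obtains
\[
\int_{\Rnp} x_N|\nabla U|^2\,dx \;=\; \int_{\Rnp} x_N^{1-s} U^p\,dx \;=\; I_2.
\]
Second, multiply the equation by $x_N^2\partial_{x_N}U$. The key algebraic identity here is
\[
\nabla\!\bigl(x_N^2\,\partial_{x_N}U\bigr)\cdot\nabla U \;=\; \tfrac{x_N^2}{2}\,\partial_{x_N}|\nabla U|^2 \,+\, 2x_N(\partial_{x_N}U)^2,
\]
after which one more integration by parts in $x_N$ on the first term gives $\int x_N^2\,\partial_{x_N}U\cdot\Delta U\,dx=\int x_N|\nabla U|^2\,dx-2\int x_N(\partial_{x_N}U)^2\,dx$. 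On the right-hand side of the equation, the identity $\partial_{x_N}U\cdot U^{p-1}=\tfrac1p\partial_{x_N}(U^p)$ and integration by parts yield $\int x_N^{2-s}\,\partial_{x_N}U\cdot U^{p-1}\,dx=-\tfrac{2-s}{p}I_2$. Combining with the first identity,
\[
\int_{\Rnp} x_N(\partial_{x_N}U)^2\,dx \;=\; \frac{p-2+s}{2p}\,I_2,\qquad \int_{\Rnp} x_N|\nabla_{x'}U|^2\,dx \;=\; \frac{p+2-s}{2p}\,I_2.
\]

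\textbf{Assembly.} By cylindrical symmetry of $U$ in $x'$, the integral $\int x_N(\partial_{x_i}U)^2\,dx$ is independent of $i\in\{1,\dots,N-1\}$ (no averaging is needed when $N=2$), so
\[
\int_{\Rnp} x_N(\partial_{x_1}U)^2\,dx \;=\; \frac{1}{N-1}\int_{\Rnp} x_N|\nabla_{x'}U|^2\,dx \;=\; \frac{p+2-s}{2p(N-1)}\,I_2.
\]
Substituting into the definition of $I_1$,
\[
I_1 \;=\; -I_2 \,+\, \frac{p+2-s}{p(N-1)}\,I_2 \;=\; \frac{-p(N-2)+2-s}{p(N-1)}\,I_2.
\]
Invoking the defining relation $p(N-2)=2(N-s)$ from \eqref{assum-pnu} (which holds trivially as $0=0$ in the case $N=2,\,s=2$), the numerator simplifies to $-(2(N-1)-s)$, which is exactly the claimed identity.

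\textbf{Main obstacle.} Since the extremal $U$ is not explicit for generic $s\in(0,2)$, one cannot evaluate $I_1,I_2$ individually; the only way to produce a closed relation between them is to find two independent Pohozaev multipliers that eliminate the auxiliary unknown $\int x_N(\partial_{x_N}U)^2\,dx$ in favor of $I_2$. The choice of $x_N U$ and $x_N^2\partial_{x_N}U$ is dictated by the extra weight $x_N$ present in $I_1$ and $I_2$; once it is made, the computation is forced, and Lemma~\ref{lem:U} handles every boundary contribution.
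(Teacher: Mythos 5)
Your proof is correct and follows exactly the same route as the paper: both use the multipliers $x_N U$ and $x_N^2\partial_{x_N}U$, invoke the cylindrical symmetry of $U$ to relate $\int x_N(\partial_{x_1}U)^2$ to $\int x_N|\nabla_{x'}U|^2$, and finish with the defining relation $p(N-2)=2(N-s)$. The only difference is presentational — you isolate $\int x_N(\partial_{x_N}U)^2$ and $\int x_N|\nabla_{x'}U|^2$ as intermediate quantities before assembling $I_1$, whereas the paper passes directly to $2\int x_N(\partial_{x_1}U)^2 = \frac{p+2-s}{(N-1)p}I_2$ — but the substance is identical.
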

\begin{proof}
Recall that $U$ satisfies 
\begin{align}\label{U-eqn}
    -\Delta U=x_N^{-s}U^{p-1},\quad \text{in }\Rnp.
\end{align}
Multiplying equation \eqref{U-eqn} by $x_NU$ and integrating by parts, we have
\begin{align}
\begin{split}\label{tv-1}
\int_{\Rnp}x_N^{-s+1}U^{p}dx&=\int_{\Rnp} \nabla U\cdot \nabla (x_NU)dx=\int_{\Rnp}x_N|\nabla U|^2dx.
\end{split}
\end{align}
Note that $x_NU$ is not in $L^2(\Rnp)$ when $N=2$. To justify the integration by parts, one has to multiply a cut-off function of $x_NU$ or do the integration by parts on compact domains, and then take the limit. We omit the details here. The above equality means $I_2=\int_{\Rnp}x_N|\nabla U|^2dx$.

Multiplying equation \eqref{U-eqn} by $x_N^2\partial_{x_N}U$ and integrating by parts on both sides, we have
\begin{align*}
-\int_{\Rnp}x_N^2\partial_{x_N}U\Delta U  dx&=\int_{\Rnp}\nabla U\cdot \nabla (x_N^2\partial_{x_N} U)dx\\
&=\frac{1}{2}\int_{\Rnp}x_N^2\partial_{x_N}|\nabla U|^2dx+2\int_{\Rnp}x_N(\partial_{x_N} U)^2dx\\
&=-\int_{\Rnp}x_N|\nabla U|^2dx+2\int_{\Rnp}x_N(\partial_{x_N} U)^2dx\\
&=I_2-2(N-1)\int_{\Rnp}x_N(\partial_{x_1}U)^2dx,
\end{align*}
and 
\begin{align*}
\int_{\Rnp}x_N^{-s}U^{p-1}\cdot x_N^2\partial_{x_N}Udx=\frac{1}{p}\int_{\Rnp}x_N^{2-s}\partial_{x_N} U^{p}dx=\frac{s-2}{p}I_2.
\end{align*}
Putting the above two identities together, we obtain
$$2\int_{\Rnp} x_N(\partial_{x_1}U)^2dx=\frac{p+2-s}{(N-1)p}I_2.$$
Inserting this identity with equation \eqref{tv-1} to the expression of $I_1$ in \eqref{def:I1}, one readily have
\begin{align*}
    I_1=-I_2+\frac{p+2-s}{(N-1)p}I_2=\frac{-2(N-1)+s}{(N-1)p}I_2.
\end{align*}
\end{proof}

Now we can prove Theorem \ref{thm:H<0}.
\begin{proof}[Proof of Theorem \ref{thm:H<0}]
Suppose the origin is on $\partial \Omega$ such that its mean curvature is negative. Set up the Fermi coordinates $\Phi$ near the origin. Define $u$ to \eqref{def:u} and $\tilde u=u\circ \Phi^{-1}$. 
We shall show $\int_{\Omega}|\nabla \tilde u|_{g_E}^2dvol_{g_E}<\mu_p(\Rnp)(\int_{\Omega} \delta^{-s} \tilde u^{p}dvol_{g_E})^{\frac{2}{p}}$ if $\varepsilon$ is chosen small enough.

Using Lemma \ref{lem:F-1} and Lemma \ref{lem:F-2}, we need to show
\begin{align*}
\int_{\Rnp}|\nabla U|^2dx+\varepsilon HI_1+o(\varepsilon)<\mu_p(\Rnp)\left(\int_{\Rnp} x_N^{-s}U^{p}dx-\varepsilon HI_2+o(\varepsilon)\right)^{\frac{2}{p}}.
\end{align*}
Since $U$ satisfies \eqref{U-eqn}, then $\int_{\Rnp}|\nabla U|^2dx=\int_{\Rnp} x_N^{-s}U^{p}dx=\left(\mu_p(\Rnp)\right)^{\frac{p}{p-2}}$. Then it suffices to show 
\begin{align}
HI_1< \mu_p(\Rnp)\left(\int_{\Rnp} x_N^{-s}U^{p}dx\right)^{\frac{2-p}{p}}\frac{2}{p}(-HI_2)=-\frac{2}{p}HI_2.
\end{align}
Note that $H<0$, then we only need to show $I_1+\frac{2}{p}I_2>0$. This can be seen from the Lemma \ref{lem:Poh} and $s>0$ that
\begin{align*}
I_1+\frac{2}{p}I_2=\frac{s}{(N-1)p}I_2>0.
\end{align*}
The proof is complete.
\end{proof}

\section{Proof of Theorem \ref{thm:H=0}}
In this section, we assume $N\geq3$, $\partial \Omega$ is $C^3$ near one point,  whose mean curvature vanishes and is non-umbilic there.
First, we have a refined expansion of the metric under Fermi coordinates from \cite{escobar1992conformal}.
\begin{lemma}\label{lem:exp-gE=0} Suppose $\partial\Omega$ is $C^3$ near origin and $H=0$ at the origin. For $|x|$ small, we have
\begin{enumerate}
    \item $g^{i j}(x',x_N)=\delta_{i j}+2 h_{i j} x_N-\frac{1}{3} \bar{R}_{iklj}x_kx_l+{g^{ij}}_{,Nm}x_Nx_m+3h_{im}{h_{mj}}x_N^2+o\left(|x|^2\right)$,
    \item $\sqrt{\operatorname{det}\left(g_{i j}\right)}(x',x_N)=1-\frac12\|h\|^2x_N^2-H_ix_ix_N-\frac{1}{6}\bar R_{ij}x_ix_j+o\left(|x|^2\right)$,
\end{enumerate}
where the coefficients are all evaluated at $0$. Here $\left\{h_{i j}\right\}$ is the second fundamental form, $\|h\|^2=h_{ij}h_{ij}$, $H$ is the trace of $h$, $H_i=(\partial_{x_i}H)(0)$.  $\{\bar{R}_{iklj}\}$ is the Riemann tensor of $\partial \Omega$ and $\{\bar{R}_{ij}\}$ is its Ricci tensor.
\end{lemma}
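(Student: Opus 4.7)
My plan is to reduce to the standard Fermi-coordinate expansion of \cite{escobar1992conformal}, refined by carrying the Taylor development one extra order in $|x|$. By construction of Fermi coordinates $g_{NN}\equiv 1$ and $g_{iN}\equiv 0$ for $i<N$, so only the tangential block $g_{ij}(x',x_N)$ requires expansion, first in $x_N$ along the inward normal geodesic based at $(x',0)\in\partial\Omega$ and then in $x'$ using boundary normal coordinates.

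First I would Taylor expand in $x_N$. Since the ambient $(\Omega,g_E)$ is flat, the shape operator of the parallel hypersurface at distance $x_N$ satisfies a purely algebraic Riccati equation, which translates into
\begin{equation*}
\partial_N g_{ij}\big|_{x_N=0}=-2h_{ij}(x'),\qquad \tfrac12\partial_N^2 g_{ij}\big|_{x_N=0}=h_{ik}(x')h_{kj}(x').
\end{equation*}
Next I would expand in $x'$: because $(x_1,\dots,x_{N-1})$ are geodesic normal coordinates on $\partial\Omega$ at the origin, the induced metric admits the classical expansion $g_{ij}(x',0)=\delta_{ij}-\tfrac13\bar R_{iklj}x_kx_l+O(|x'|^3)$, while $h_{ij}(x')=h_{ij}+h_{ij,m}x_m+O(|x'|^2)$. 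Assembling these produces the two-jet of $g_{ij}(x',x_N)$ up to $o(|x|^2)$ with all coefficients evaluated at the origin.

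Formula (1) then follows by inverting the matrix series via $(\mathrm{Id}+A)^{-1}=\mathrm{Id}-A+A^2+o(\|A\|^2)$; in particular the pure $x_N^2$ coefficient becomes $4h_{ik}h_{kj}-h_{ik}h_{kj}=3h_{ik}h_{kj}$, which accounts for the factor $3$ in the stated expansion, and the mixed $x_N x_m$ piece is packaged as the symbol $g^{ij}{}_{,Nm}$. Formula (2) follows from the identity $\sqrt{\det g}=\exp(\tfrac12\,\mathrm{tr}\log g)$: Taylor expanding $\log g$ through order $|x|^2$ and taking the trace, the linear-in-$x_N$ term is $-Hx_N$, which vanishes by hypothesis; the mixed term gives $-H_m x_m x_N$ via differentiation of the mean curvature; the tangential quadratic term produces the Ricci contraction $-\tfrac16\bar R_{ij}x_ix_j$; and the $x_N^2$ coefficient reduces cleanly to $-\tfrac12\|h\|^2$ precisely because the $\tfrac12 H^2 x_N^2$ contribution inherent in $\mathrm{tr}\log$ vanishes under $H=0$.

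The main obstacle is \emph{pure bookkeeping} of tensor contractions and signs, not any conceptual point; each ingredient (the Riccati evolution in a flat ambient, the boundary normal coordinate expansion, termwise inversion of a matrix series, and the $\log\det$ identity) is entirely standard, and this precise two-jet refinement of the Fermi-coordinate metric is worked out in detail in \cite[Section 3]{escobar1992conformal}, which we would cite directly for the final verification.
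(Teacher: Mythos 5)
The paper does not actually prove Lemma 5.1 --- it is stated as a quotation from \cite[Section 3]{escobar1992conformal}, and the authors simply cite Escobar's computation. Your sketch is a faithful reconstruction of Escobar's derivation: Fermi coordinates split off $g_{NN}=1$, $g_{iN}=0$; the normal Taylor expansion $\partial_N g_{ij}|_0=-2h_{ij}$, $\tfrac12\partial_N^2 g_{ij}|_0=h_{ik}h_{kj}$ comes from the flat-ambient Riccati evolution of the parallel-surface shape operator; the tangential two-jet comes from boundary normal coordinates; matrix inversion then produces the $3h_{ik}h_{kj}x_N^2$ coefficient from $4h^2-h^2$ exactly as you say; and $\exp(\tfrac12\operatorname{tr}\log g)$ yields part (2), with the $\tfrac12H^2x_N^2$ term from the exponential dropping out because $H=0$. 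All of this checks out (one can verify the $x_N^2$ coefficients on a round sphere, where $g_{ij}(0,x_N)=(1-x_N/R)^2\delta_{ij}$ gives $\tfrac12\partial_N^2 g_{ij}=R^{-2}\delta_{ij}=h_{ik}h_{kj}$). The only thing to watch is the sign convention for $\bar R_{iklj}$: you wrote $g_{ij}(x',0)=\delta_{ij}-\tfrac13\bar R_{iklj}x_kx_l$, which after inversion would give $+\tfrac13\bar R_{iklj}$ in $g^{ij}$ rather than the $-\tfrac13\bar R_{iklj}$ appearing in the lemma; this is just a matter of which index ordering convention for the curvature tensor one takes (as you acknowledge, the work here is pure bookkeeping of signs), and since you ultimately verify against Escobar's formulas this does not affect the result. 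In short: correct, and essentially identical to the source the paper itself defers to.
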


We define
\begin{align}\label{def:u=0}
u(x)=\varphi(|x|)(U_\varepsilon+\psi_\varepsilon)(x),
\end{align}
where $\varphi$ and $U_\varepsilon$ are as before,  $\psi_\varepsilon(x)=\varepsilon^{2-\frac{N}{2}}\psi(x/\varepsilon)$ with  $\psi(x)=Ah_{ij}x_ix_jx_NZ(x)$. Here $Z$ is some function to be chosen and $A$ is some constant to be determined. 
Such type of test function has been used by \citet{marques2007conformal}. Again, via Fermi coordinates, we define $\tilde u=u\circ \Phi$ which lives on $\Omega$.

We shall assume that $Z$ has cylindrical symmetry and a similar bound as $U$ in \eqref{u-bound}. 
\begin{align}
\begin{split}\label{Zprop}
    Z(x)=Z(|x'|&,x_n)=Z(r,x_N), \\
    |Z(x)|\leq C(1+|x|)^{-1-N},&\quad|\nabla Z(x)|\leq C(1+|x|)^{-2-N}.
\end{split}
\end{align}
Here and throughout this section, we denote $r=|x'|$. Later we will choose $Z(r,x_N)=r^{-1}\partial_rU(r,x_N)$. One can see that it satisfies the above two properties in the special case $p=2+2/N$ and $p=2+4/N$ because the explicit form \eqref{U:2/n} and \eqref{U:4/n}.

 Under the assumption of $Z$, $\psi$ can be bounded as
\begin{align}\label{psi-bound}
|\psi(x)|\leq C(1+|x|)^{2-N},\quad |\nabla \psi(x)|\leq C(1+|x|)^{1-N}.
\end{align}
One can think of $\psi$ as a perturbation of $U$, because if $\rho_0$ is sufficiently small and $A$ is fixed, then $|\psi_\varepsilon|=O(r)U_\varepsilon$ by \eqref{u-bound}. Thus, in $B_{2\rho_0}^+$, one has 
\begin{align}\label{1/2U}
    \frac12\varphi(|x|)U_\varepsilon\leq u\leq 2\varphi(|x|)U_\varepsilon.
\end{align}
\begin{lemma}\label{lem:F-1=0}
Suppose that $\rho_0$ is a fixed small number and $u$ is defined as \eqref{def:u=0} with $Z$ satisfies \eqref{Zprop}. For $\varepsilon$ sufficiently small, we have
\begin{align}\label{exp-U=0}
\int_{\Omega}|\nabla \tilde u|^2_{g_E}dvol_{g_E}=\int_{B_{2\rho_0}^+}|\nabla u|_g^2dvol_g=\int_{\Rnp}|\nabla U|^2dx+\varepsilon^2 \|h\|^2 I_5+o(\varepsilon^2),
\end{align}
where 
\begin{align}\label{def:I35}
I_5&=I_3 +\frac{2A^2}{N^2-1}\int_{\Rnp} x_N^2r^4|\nabla Z|^2+\frac{8A}{N^2-1}\int_{\Rnp}x_N^2[rU_r-r^2U_{rr}]Zdr,
\end{align}
and 
\begin{align*}
I_3=\int_{\Rnp}\left[-\frac12x_N^2|\nabla U|^2+\frac16x_1^2|\nabla U|^2+3x_N^2(\partial_{x_1}U)^2\right]dx.
\end{align*}
\end{lemma}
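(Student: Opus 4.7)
The plan is to imitate Lemma~\ref{lem:F-1} but push the Fermi-metric expansion to second order in $|x|$, since $H=0$ kills the $O(\varepsilon)$ contribution. By Lemma~\ref{lem:exp-gE=0},
$|\nabla u|^2_g\sqrt{\det g} = |\nabla u|^2 + 2h_{ij}x_N u_iu_j + Q_{ij}(x)u_iu_j + T(x)|\nabla u|^2 + O(|x|^3|\nabla u|^2)$,
where $Q_{ij}:=-\tfrac13\bar R_{iklj}x_kx_l + g^{ij}_{,Nm}x_Nx_m + 3(h^2)_{ij}x_N^2$ and $T:=-\tfrac12\|h\|^2x_N^2 - H_ix_ix_N - \tfrac16\bar R_{ij}x_ix_j$. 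After plugging in $u=\varphi(U_\varepsilon+\psi_\varepsilon)$ and rescaling $y=x/\varepsilon$, every summand is either a leading constant in $\varepsilon$, of order $\varepsilon$, or of order $\varepsilon^2$; the cutoff annulus $B_{2\rho_0}^+\setminus B_{\rho_0}^+$ contributes $o(\varepsilon^2)$ by the decay bounds \eqref{u-bound} and \eqref{psi-bound}.

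The $O(\varepsilon)$ pieces vanish on account of $H=0$. After integrating by parts and using $-\Delta U=y_N^{-s}U^{p-1}$, the flat cross term equals $2\varepsilon A h_{ij}\int y_iy_jy_N^{1-s}U^{p-1}Z\,dy$; cylindrical symmetry of $U^{p-1}Z$ gives $\int y_iy_j\, g(r,y_N)\,dy=\tfrac{\delta_{ij}}{N-1}\int r^2 g$, so the $h_{ij}$-contraction collapses to the trace $H=0$. The linear metric piece $2\varepsilon h_{ij}\int y_N U_iU_j$, with $U_iU_j=(y_iy_j/r^2)U_r^2$, vanishes by the same mechanism.

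For the $O(\varepsilon^2)$ contributions, three sources survive.
\emph{(a) Quadratic metric on $U_\varepsilon U_\varepsilon$.} The Gauss identity $\bar R_{iklj}=h_{il}h_{kj}-h_{ij}h_{kl}$ combined with the Wick formula $\int y_iy_jy_ky_l r^{-2}g\,dy=\tfrac{\delta_{ij}\delta_{kl}+\delta_{ik}\delta_{jl}+\delta_{il}\delta_{jk}}{(N-1)(N+1)}\int r^2 g\,dy$ makes the Riemann piece cancel identically; the $g^{ij}_{,Nm}y_Ny_m$ mixed piece is killed by parity in $y_m$; and the $3(h^2)_{ij}y_N^2$, $-\tfrac12\|h\|^2 y_N^2$, and (via $\bar R_{ij}=(h^2)_{ij}$ from Gauss plus $H=0$) $-\tfrac16\bar R_{ij}y_iy_j$ contributions assemble, using $\mathrm{tr}(h^2)=\|h\|^2$ and $\int r^2|\nabla U|^2=(N-1)\int y_1^2|\nabla U|^2$, into $\|h\|^2 I_3$.
\emph{(b) Cross term $2h_{ij}y_NU_i\psi_j$.} Expanding $\partial_j\psi$, contracting with $h_{ij}U_i=(h_{ij}y_i/r)U_r$, and averaging the cylindrical quadratic forms $(h^2)_{ij}y_iy_j\mapsto\tfrac{\|h\|^2 r^2}{N-1}$ and $(h_{ij}y_iy_j)^2\mapsto\tfrac{2\|h\|^2 r^4}{N^2-1}$ yields $\tfrac{8A\|h\|^2}{N-1}\int r y_N^2 U_r Z + \tfrac{8A\|h\|^2}{N^2-1}\int r^2 y_N^2 U_r Z_r$; one integration by parts in $r$ (using $\partial_r(r^N U_r)=Nr^{N-1}U_r+r^N U_{rr}$) combines these into $\tfrac{8A\|h\|^2}{N^2-1}\int y_N^2[rU_r-r^2U_{rr}]Z$.
\emph{(c) Flat $|\nabla\psi_\varepsilon|^2$.} Expanding $|\nabla\psi|^2$ and averaging produces four pieces in the cylindrical invariants; integration by parts in $y_N$ converts $(Z+y_NZ_{y_N})^2 r^4$ into $y_N^2 Z_{y_N}^2 r^4$, and integration by parts in $r$ cancels the $(h^2)_{ij}y_iy_j Z^2$ piece against the $(h_{ij}y_iy_j)^2 ZZ_r/r$ piece, leaving $\tfrac{2A^2\|h\|^2}{N^2-1}\int y_N^2 r^4|\nabla Z|^2$.

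The main obstacle is the tensor arithmetic of (a) — in particular, showing the exact cancellation of the Riemann contribution via Wick pairings and Gauss — together with the chain of integrations by parts in (b) and (c) that compress everything into the stated form of $I_5$. The $\varepsilon^3$ remainders (quadratic metric against $U_\varepsilon\psi_\varepsilon$ or $\psi_\varepsilon\psi_\varepsilon$, linear metric against $\psi_\varepsilon\psi_\varepsilon$, and $T|\nabla\psi_\varepsilon|^2$) and the cutoff-annulus error are routine from the polynomial decay of $U$ and $\psi$.
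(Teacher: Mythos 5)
Your proposal follows essentially the same strategy as the paper: split the Fermi expansion of $|\nabla u|_g^2\sqrt{\det g}$ into flat, linear, and quadratic (in $|x|$) pieces, kill the $O(\varepsilon)$ contributions using $H=0$ and cylindrical symmetry, and collect the $O(\varepsilon^2)$ contributions into $I_3$, the $U$–$\psi$ cross term (b), and the $|\nabla\psi|^2$ term (c). Your parts (b) and (c) agree with the paper's computations. For the vanishing of the Riemann piece in (a) you use Gauss plus the fourth-moment Wick formula, whereas the paper invokes curvature symmetries; both work (in fact the antisymmetry of $\bar R_{iklj}$ in its first index pair already kills $\bar R_{iklj}x_kx_lx_ix_j$). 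Your $O(\varepsilon)$ cancellation, via integrating the cross term by parts onto $-\Delta U = x_N^{-s}U^{p-1}$, is a slightly slicker route to the paper's claim $\int\nabla U\cdot\nabla\psi\,dx=0$; these are equivalent.

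However, there is a sign error in (a) that, taken literally, gives the wrong $I_3$. The Gauss equation you wrote, $\bar R_{iklj}=h_{il}h_{kj}-h_{ij}h_{kl}$, is the correct one and gives $\bar R = H^2-\|h\|^2$; contracting it gives $\bar R_{ij}=Hh_{ij}-(h^2)_{ij}$, so at an $H=0$ point one has $\bar R_{ij}=-(h^2)_{ij}$, \emph{not} $+(h^2)_{ij}$ as you wrote. (Indeed, since $\mathrm{tr}\,\bar R_{ij}=\bar R=-\|h\|^2<0$, the sign you state is inconsistent with the scalar curvature you would also need.) With your formula the $-\frac16\bar R_{ij}x_ix_j$ piece of $\sqrt{\det g}$ produces $-\frac16\|h\|^2\int x_1^2|\nabla U|^2$, the opposite sign of the $+\frac16 x_1^2|\nabla U|^2$ term in $I_3$ that the lemma asserts. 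The paper sidesteps this by working directly with the scalar: after the cylindrical average $\bar R_{ij}\int x_ix_jf\,dx=\bar R\int x_1^2f\,dx$, it substitutes $\bar R=-\|h\|^2$. Fixing the sign of the Ricci formula makes your assembly come out correctly.
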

\begin{proof}
Similar to the proof of Lemma \ref{lem:F-1}, we apply Lemma \ref{lem:exp-gE=0} to obtain 
\begin{align}
\begin{split}\label{energy-1=0}
&\int_{B_{\rho_0}^{+}}|\nabla u|_{g_E}^2 dvol_{g_E} =  \int_{B_{\rho_0}^{+}}\left(g^{i j} u_i u_j+u_N^2\right) \sqrt{\operatorname{det}(g)} dx\\
&=\int_{B_{\rho_0}^{+}}|\nabla u|^2 \sqrt{\operatorname{det}(g)} dx+2 h_{i j} \int_{B_{\rho_0}^{+}}x_Nu_i u_j \sqrt{\det(g)}dx-\frac{1}{3} \bar{R}_{iklj}\int_{B_{\rho_0}^+}x_kx_lu_iu_j\sqrt{\det(g)}dx\\&+\int_{B_{\rho_0}^+}{g^{ij}}_{,Nm}x_Nx_mu_iu_j\sqrt{\det(g)}+3h_{im}{h_{mj}}\int_{B_{\rho_0}^+}x_N^2u_iu_j\sqrt{\det(g)}dx+o(\tilde e)\\
&= J_1+J_2-J_3+J_4+J_5+o(\tilde e),
\end{split}
\end{align}
where 
\begin{align*}
\tilde e:=\int_{B_{\rho_0}^+}|x|^2|\nabla u|^2 \sqrt{\det(g)} dx.
\end{align*}

For the first term $J_1$ on the right hand side of \eqref{energy-1=0}, we apply Lemma \ref{lem:exp-gE=0} again to get 
\begin{align}
\begin{split}\label{1-1=0}
J_1= &\, \int_{B_{\rho_0}^{+}}|\nabla u|^2 \sqrt{\operatorname{det}(g)} dx\\
=&\,\int_{B_{\rho_0}^{+}}|\nabla u|^2 dx-\frac12\|h\|^2 \int_{B_{\rho_0}^{+}} x_N^2|\nabla u|^2 dx-\frac16 \bar{R}_{ij}\int_{B_{\rho_0}^+}x_ix_j|\nabla u|^2dx+o(\tilde e)
\end{split}
\end{align}
Here we have used the cylindrical symmetry of $u$ which leads to $\int_{B_{\rho_0}^+}x_ix_N|\nabla u|^2dx=0$.

For $N
\geq 3$, the first term on the right-hand side of \eqref{1-1=0} is estimated by \eqref{1-1-1} and the following
\begin{align}\label{J-1-1-1}
\begin{split}
\int_{B_{\rho_0}^{+}}|\nabla u|^2 dx
&=\int_{\Rnp} (|\nabla U|^2 +\varepsilon^2|\nabla \psi|^2)dx+O(\varepsilon^N\rho_0^{-N}),
\end{split}
\end{align}
where we claim that 
\begin{align}\label{J-1-1-2}
\int_{B_{\rho_0/\varepsilon}^+}\nabla U\cdot \nabla \psi dx=0,
\end{align}
due to the cylindrical symmetry of $U$ and $H=0$. In fact,
 since $\psi=Ah_{kl}x_kx_lx_NZ$, we have
\begin{align}
     \psi_j&=Ax_N[2h_{jl}x_lZ+h_{kl}x_kx_lx_j r^{-1}Z_r]\label{psii},\\
       \psi_N&=Ah_{kl}x_kx_l[Z+x_NZ_N],\label{psiN}
\end{align}
where $Z_r=\partial_r Z(r,x_N)$ and $Z_N=\partial_{x_N}Z(r,x_N)$. Then by $H=0$, we have
\begin{align*}
    \int_{B_{\rho_0/\varepsilon}^+}U_N\psi_N dx=Ah_{kl}\int_{B_{\rho_0/\varepsilon}^+}x_k x_l U_N[Z+x_NZ_N]dx=AH\int_{B_{\rho_0/\varepsilon}^+}x_1^2 U_N[Z+x_NZ_N]dx=0,
\end{align*}
and summing $j$ from $1$ to $N-1$,
\begin{align*}
   \int_{B_{\rho_0/\varepsilon}^+}U_j\psi_j dx=&\int_{B_{\rho_0/\varepsilon}^+}x_j r^{-1}U_r Ax_N[2h_{jl}x_lZ+h_{kl}x_kx_lx_j r^{-1}Z_r]dx \\  =&2Ah_{jl}\int_{B_{\rho_0/\varepsilon}^+}x_jx_lx_Nr^{-1}U_r Zdx+Ah_{kl}\int_{B_{\rho_0/\varepsilon}^+}x_kx_lx_NU_rZ_rdx\\
   =&2AH\int_{B_{\rho_0/\varepsilon}^+}x_1^2x_Nr^{-1}U_rZdx+AH\int_{B_{\rho_0/\varepsilon}^+}x_1^2x_NU_rZ_rdx=0.
\end{align*}
Adding the last two equalities together, we get \eqref{J-1-1-2}. Next, we compute $\int_{\Rnp}|\nabla \psi|^2dx$. By \eqref{psii} and \eqref{psiN}, we have
\begin{align*}
    \int_{\Rnp}|\nabla \psi|^2dx &=A^2\int_{\Rnp}x_N^2[4h_{jk}h_{jl}x_kx_lZ^2+h_{kl}h_{mn}x_kx_lx_mx_nZ_r^2+4h_{ji}h_{kl}x_kx_lx_ix_jr^{-1}ZZ_r]dx\\
    &\quad +A^2\int_{\Rnp}h_{kl}h_{mn}x_kx_lx_mx_n[Z^2+x_N^2Z_N^2+2x_NZZ_N]dx\\
&=A^2\int_{\Rnp}x_N^2[4h_{jk}h_{jl}x_kx_lZ^2+(h_{kl}x_kx_l)^2Z_r^2+4(h_{kl}x_kx_l)^2r^{-1}ZZ_r]dx\\
    &\quad +A^2\int_{\Rnp}(h_{kl}x_kx_l)^2[Z^2+x_N^2Z_N^2+2x_NZZ_N]dx\\  &=A^2\|h\|^2\int_{\Rnp}x_N^2\left[\frac{4}{N-1}r^2Z^2+\frac{2}{N^2-1}r^4Z_r^2+\frac{8}{N^2-1}r^3ZZ_r\right]dx\\
    &\quad +A^2\|h\|^2\int_{\Rnp}\frac{2r^4}{N^2-1}\left[Z^2+x_N^2Z_N^2+2x_NZZ_N\right]dx,
\end{align*}
where we have used the identity (see \cite[p. 390]{marques2007conformal})
\begin{align}\label{marques}
    \int_{S_r^{N-2}}(h_{ij}x_ix_j)^2=\int_{S_r^{N-2}}h_{ij}h_{kl}x_ix_jx_kx_l=\frac{2\|h\|^2}{N^2-1}\int_{S_r^{N-2}}r^4.
\end{align}
Simple integration by parts yields
\begin{align*}
    &\quad\int_{\Rnp}|\nabla \psi|^2dx\\
    &=\frac{2A^2\|h\|^2}{N^2-1}\int_{\Rnp}\left(x_N^2r^2[2(N+1)Z^2+r^2Z_r^2+4rZZ_r]+r^4[Z^2+x_N^2Z_N^2+2x_NZZ_N]\right)dx\\
    &=\frac{2A^2\|h\|^2}{N^2-1}\int_{\Rnp}\left(x_N^2r^2[2(N+1)Z^2+r^2Z_r^2-2(N+1)Z^2]+x_N^2r^4Z_N^2\right)dx\\
    &=\frac{2A^2\|h\|^2}{N^2-1}\int_{\Rnp} x_N^2r^4(Z_r^2+Z_N^2)dx=\frac{2A^2\|h\|^2}{N^2-1}\int_{\Rnp} x_N^2r^4|\nabla Z|^2dx.
\end{align*}
Back to \eqref{J-1-1-1}, we have
\begin{align}\label{J-1-1-3}
\begin{split}
\int_{B_{\rho_0}^{+}}|\nabla u|^2 dx
&=\int_{\Rnp} |\nabla U|^2 dx+\frac{2A^2\|h\|^2}{N^2-1}\varepsilon^2\int_{\Rnp}x_N^2r^4|\nabla Z|^2dx+O(\varepsilon^N\rho_0^{-N}).
\end{split}
\end{align}

Similarly, we estimate the second term on the right-hand side of \eqref{1-1=0} as the following
\begin{align*}
\begin{split}
\int_{B_{\rho_0}^{+}} x_N^2|\nabla u|^2 dx
&=\varepsilon^2\int_{B_{\rho_0/\varepsilon}^+}x_N^2|\nabla U+\varepsilon \nabla \psi|^2dx\\
&=\varepsilon^2\int_{\Rnp}x_N^2|\nabla U|^2dx+O(\varepsilon^N\rho_0^{2-N})+O\left(\varepsilon^4
\int_{B_{\rho_0/\varepsilon}^+}x_N^2| \nabla \psi|^2dx\right)\\
&=\varepsilon^2\int_{\Rnp}x_N^2|\nabla U|^2dx+\begin{cases}
O(\varepsilon^3\rho_0^{-1}), &\text{if } N=3,\\
O(\varepsilon^4\ln\frac{\rho_0}{\varepsilon}), &\text{if } N=4,\\
O(\varepsilon^4), &\text{if } N\geq 5,
\end{cases}
\end{split}
\end{align*}
 where we have used the following fact, whose verification is similar to that of \eqref{J-1-1-2},
 \begin{align*}
     \int_{\Rnp}x_N^2\nabla U\nabla \psi  dx=0.
 \end{align*}

For the third term on the right-hand side of \eqref{1-1=0}, we have
\begin{align*}
\bar{R}_{ij}\int_{B_{\rho_0}^+}x_ix_j|\nabla u|^2dx&=\bar R\varepsilon^2\int_{B_{\rho_0/\varepsilon}^+}x_1^2|\nabla U+\varepsilon \nabla \psi|^2\\
&=\bar R\varepsilon^2\int_{\Rnp}x_1^2|\nabla U|^2+O(\varepsilon^N\rho_0^{2-N})+O\left(\varepsilon^4\int_{B_{\rho_0/\varepsilon}^+}x_1^2| \nabla \psi|^2dx\right)\\
&=-\|h\|^2\varepsilon^2\int_{\Rnp}x_1^2|\nabla U|^2+O(\varepsilon^N\rho_0^{2-N})+\begin{cases}
O(\varepsilon^3\rho_0^{-1}), & \text{if }N=3,\\
O(\varepsilon^4\ln\frac{\rho_0}{\varepsilon}), & \text{if }N=4,\\
O(\varepsilon^4), & \text{if }N\geq 5.
\end{cases}
\end{align*}
Here $\bar R$ is the scalar curvature of $\partial \Omega$ at 0. By the Gauss curvature equation, since the ambient metric is Euclidean, one has $\bar R=-\|h\|^2$, for instance, see \cite[(3.22)]{escobar1992conformal}. 

To deal with the last term on the right-hand side of \eqref{1-1=0}, applying the bound of $U$ in \eqref{u-bound} and $\psi$ in \eqref{psi-bound}, we obtain that on $B_{\rho_0}^{+}$,
$$
|\nabla u|^2(x) \leq C \varepsilon^{-N}\left(\frac{|x|}{\varepsilon}+1\right)^{-2 N} 
+C\varepsilon^{2-N}\left(\frac{|x|}{\varepsilon}+1\right)^{2-2 N}\leq C(\rho_0)\varepsilon^{-N}\left(\frac{|x|}{\varepsilon}+1\right)^{-2 N}, $$
which leads to 
\begin{align*}
\begin{split}
 \tilde e\leq C\int_{B_{\rho_0}^{+}}|x|^2|\nabla u|^2 dx &\leq \varepsilon^2 \int_{B_{\rho_0 / \varepsilon}^{+}}|x|^2(1+|x|)^{-2 N} dx  \leq C \varepsilon^2
\end{split}
\end{align*}

Now we insert the previous four estimates to \eqref{1-1=0} to get
\begin{align}\label{1-final=0}
\begin{split}
J_1=&\  \int_{\Rnp} |\nabla U|^2 dx+ \frac{2A^2\|h\|^2}{N^2-1}\varepsilon^2\int_{\Rnp} x_N^2r^4|\nabla Z|^2 dx -\frac12  \|h\|^2  \varepsilon^2\int_{\Rnp} x_N^2|\nabla  U |^2 dx \\
&+\frac16 \|h\|^2\varepsilon^2   \int_{\Rnp} x_1^2 |\nabla U|^2 dx+o(\varepsilon^2)
\end{split}
\end{align}

For the second integral $J_2$ on the right-hand side of \eqref{energy-1=0}, we estimate  
\begin{align*}
\begin{split}
J_2&=2h_{ij}\int_{B_{\rho_0}^{+}} x_N u_i u_j \sqrt{\operatorname{det}(g)} dx=2h_{ij}\int_{B_{\rho_0}^{+}} x_N u_i u_j dx+o(\tilde{e}) \\
&= 2h_{ij}\int_{B_{\rho_0}^{+}} x_N\left( U_{\varepsilon}\right)_i\left( U_{\varepsilon}\right)_j dx+4h_{ij}\int_{B_{\rho_0}^+}x_N (U_\varepsilon)_i(\psi_\varepsilon)_j dx\\
&\quad +2h_{ij}\int_{B_{\rho_0}^+}x_N(\psi_\varepsilon)_i(\psi_\varepsilon)_jdx+o(\tilde{e}).
\end{split}
\end{align*}
The first term on the right-hand side of the above equality is zero because of the symmetries of the half-ball and $U$ and $H=0$. For the other two terms, we have 
\[\int_{B_{\rho_0}^+}x_N(\psi_\varepsilon)_i(\psi_\varepsilon)_jdx=\varepsilon^3\int_{B_{\rho_0/\varepsilon}^+}x_N\psi_i\psi_jdxdx=\varepsilon^3\int_{B_{\rho_0/\varepsilon}^+}x_N(1+|x|)^{2-2N}dx=o(\varepsilon^2),\]
\[\int_{B_{\rho_0}^+}x_N(U_\varepsilon)_i(\psi_\varepsilon)_jdx=\varepsilon^{2}\int_{\Rnp}x_NU_i\psi_jdx+O(\varepsilon^N\rho_0^{2-N}).\]
We recall \eqref{psii} to see that 
\begin{align*}
    h_{ij}\int_{\Rnp}x_NU_i\psi_jdx&=A h_{ij}\int_{\Rnp}x_N^2x_ir^{-1}\partial_rU[2h_{jl}x_lZ+h_{kl}x_kx_lx_j r^{-1}Z_r]dx\\
    &=A\|h\|^2\int_{\Rnp}\left[\frac{2}{N-1}x_N^2(rU_r)Z+\frac{2}{N^2-1}x_N^2r^2U_rZ_r\right]dx\\
    &=\frac{2A\|h\|^2}{N^2-1}\int_{\Rnp}x_N^2[(N+1)rU_rZ-NrU_rZ-r^2U_{rr}Z]dx\\
    &=\frac{2A\|h\|^2}{N^2-1}\int_{\Rnp}x_N^2[rU_r-r^2U_{rr}]Zdx.
\end{align*}
Therefore 
\begin{align*}
    J_2=\frac{8A\|h\|^2}{N^2-1}\varepsilon^{2}\int_{\Rnp}x_N^2[rU_r-r^2U_{rr}]Zdx+o(\varepsilon^2).
\end{align*}

For $J_3$, we have
\begin{align}
\begin{split}\label{J3}
J_3&=\frac{1}{3} \bar{R}_{iklj}\int_{B_{\rho_0}^+}x_kx_lu_iu_j\sqrt{\det(g)}dx
=\frac{1}{3} \bar{R}_{iklj}\int_{B_{\rho_0}^+}x_kx_lu_iu_jdx+o(\tilde e)\\
&=\frac{1}{3} \bar{R}_{iklj}\varepsilon^2\int_{B_{\rho_0/\varepsilon}^+}x_kx_l(U_i+\varepsilon\psi_i)(U_j+\varepsilon\psi_j)dx+o(\tilde e)\\
&=\frac{1}{3} \bar{R}_{iklj}\varepsilon^2\int_{B_{\rho_0/\varepsilon}^+}x_kx_lx_ix_jr^{-2}(\partial_r U)^2dx+o(\tilde e)=o(\tilde e)
\end{split}
\end{align}
where we have used the symmetries of the curvature tensor (i.e. $\bar{R}_{ijkl}+\bar{R}_{iklj}+\bar{R}_{iljk}=0$) and cylindrical symmetry of $U(x',x_N)=U(r,x_N)$ to write $U_i=x_ir^{-1}\partial_r U$.

Similarly, using the cylindrical symmetry of $U$, we have
\begin{align}\label{J4}
    J_4=\int_{B_{\rho_0}^+}{g^{ij}}_{,Nm}x_Nx_mu_iu_j\sqrt{\det(g)}dx=o(\tilde e).
\end{align}

For $J_5$, we have
\begin{align*}
\begin{split}
J_5&=3h_{im}{h_{mj}}\int_{B_{\rho_0}^+}x_N^2u_iu_j\sqrt{\det(g)}dx=3h_{im}{h_{mj}}\int_{B_{\rho_0}^+}x_N^2u_iu_jdx+o(\tilde e)\\
&= 3h_{im}{h_{mj}}\int_{B_{\rho_0}^+}x_N^2(U_\varepsilon)_i(U_\varepsilon)_jdx+o(\tilde e)\\
&= 3\|h\|^2\varepsilon^2\int_{\Rnp}x_N^2(\partial_{x_1}U)^2dx+o(\tilde e)+O(\varepsilon^N\rho_0^{2-N}).
\end{split}
\end{align*}

Combining the above estimates of $J_1$ to $J_5$, we have a bound of \eqref{energy-1=0},
\begin{align*}
\begin{split}
\int_{B_{\rho_0}^+}|\nabla u|_{g}^2 dvol_{g} =&\, \int_{\Rnp} |\nabla U|^2 dx+\frac{2A^2\|h\|^2}{N^2-1}\varepsilon^2\int_{\Rnp} x_N^2r^4|\nabla Z|^2dx\\
&-\frac12 \|h\|^2 \varepsilon^2  \int_{\Rnp} x_N^2|\nabla U|^2 dx +\frac16\varepsilon^2\|h\|^2   \int_{\Rnp} x_1^2|\nabla U|^2 dx\\
&+\frac{8A\|h\|^2}{N^2-1}\varepsilon^2\int_{\Rnp}x_N^2[rU_r-r^2U_{rr}]Zdx\\
&+3\|h\|^2\varepsilon^2 \int_{\Rnp}x_N^2(\partial_{x_1}U)^2dx+o(\varepsilon^2).
\end{split}
\end{align*}

On $B_{2 \rho_0}^{+}-B_{\rho_0}^{+}$, we can derive a similar estimates as \eqref{ring-energy}.
\begin{align*}
\int_{B_{2 \rho_0}^{+}-B_{\rho_0}^{+}}|\nabla u|_{g}^2 dvol_{g} \leqslant C \cdot\left(\frac{\rho_0}{\varepsilon}\right)^{-N}=C  \varepsilon^N  \rho_0^{-N}=o(\varepsilon^2).
\end{align*}
Combining the above two estimates, we have 
\begin{align*}
    \int_{B_{2\rho_0}^+}|\nabla u|_{g}^2 dvol_{g}= \int_{\Rnp}|\nabla U|^2dx+\varepsilon^2\|h\|^2 I_5+o(\varepsilon^2),
\end{align*}
where $I_5$ is defined in \eqref{def:I35}. This proves \eqref{exp-U=0}.
\end{proof}

\begin{lemma}\label{lem:F-2=0}
Suppose that $\rho_0$ is a fixed small number and $u$ is defined through \eqref{def:u} with $Z$ satisfies \eqref{Zprop}. For $\varepsilon$ sufficiently small, one has 
\begin{align}\label{exp-ubd=0}
\int_{\Omega} \frac{\tilde u^{p}}{\delta_\Omega^s}dvol_{g_E}=\int_{B_{2\rho_0}^+} \frac{u^{p}}{x_N^s}dvol_{g}=\int_{\Rnp}x_N^{-s}U^{p}dx-\varepsilon^2\|h\|^2 I_6+o(\varepsilon^2),
\end{align}
where 
\begin{align}\label{def:I46}
    I_6=I_4-p(p-1)\frac{A^2}{N^2-1}\int_{\Rnp}x_N^{2-s}r^4U^{p-2}Z^2,
\end{align}
and $I_4=\int_{\Rnp}\left[\frac12 x_N^{-s+2}U^{p}-\frac{1}{6}x_N^{-s}x_1^2U^{p}\right]dx$.
\end{lemma}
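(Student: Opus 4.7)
The proof parallels Lemma \ref{lem:F-2} but must be pushed to order $\varepsilon^2$ using the sharper expansion of $\sqrt{\det g}$ in Lemma \ref{lem:exp-gE=0} together with a binomial expansion of $u^p$. The first equality is immediate because $\delta_\Omega\circ\Phi = x_N$ in Fermi coordinates. On the annulus $B_{2\rho_0}^+\setminus B_{\rho_0}^+$, the decay bound \eqref{u-bound} combined with \eqref{1/2U} gives a contribution of size $O((\rho_0/\varepsilon)^{-Np/2}) = o(\varepsilon^2)$ since $Np/2 = p+N-s > 2$, just as in Lemma \ref{lem:F-2}.

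On $B_{\rho_0}^+$ where $\varphi\equiv 1$, I would use the binomial expansion $u^p = U_\varepsilon^p + p U_\varepsilon^{p-1}\psi_\varepsilon + \tfrac{p(p-1)}{2}U_\varepsilon^{p-2}\psi_\varepsilon^2 + R$, with remainder $|R|\le C U_\varepsilon^{p-3}|\psi_\varepsilon|^3$ valid because $|\psi_\varepsilon/U_\varepsilon|\le C\rho_0 < 1/2$ on $B_{2\rho_0}^+$ (from \eqref{u-bound}, \eqref{Zprop}); rescaling $y=x/\varepsilon$ then shows $\int x_N^{-s}R\,dx = O(\varepsilon^3) = o(\varepsilon^2)$. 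Multiplying this expansion against $\sqrt{\det g}$ and collecting contributions up to order $\varepsilon^2$ produces five kinds of terms. The leading piece rescales to $\int_{\Rnp} x_N^{-s}U^p\,dx$ modulo $o(\varepsilon^2)$. The cross term $p\int x_N^{-s}U_\varepsilon^{p-1}\psi_\varepsilon\,dx$ scales a priori as $\varepsilon^1$, but after rescaling reduces to a multiple of $h_{kl}\int y_ky_l F(|y'|,y_N)\,dy$ with $F$ cylindrical in $y'$, which collapses to $\operatorname{tr}(h)=H=0$. The quadratic term $\tfrac{p(p-1)}{2}\int x_N^{-s}U_\varepsilon^{p-2}\psi_\varepsilon^2\,dx$ scales as $\varepsilon^2$ and via the identity \eqref{marques} produces exactly $\varepsilon^2\tfrac{p(p-1)A^2\|h\|^2}{N^2-1}\int x_N^{2-s}r^4 U^{p-2}Z^2\,dx$. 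Pairing $U_\varepsilon^p$ with the metric corrections $-\tfrac12\|h\|^2 x_N^2$ and $-\tfrac16 \bar R_{ij}x_ix_j$ yields $-\tfrac{\varepsilon^2}{2}\|h\|^2\int x_N^{2-s}U^p\,dx$ and $+\tfrac{\varepsilon^2}{6}\|h\|^2\int x_1^2 x_N^{-s}U^p\,dx$ respectively; the latter uses the Gauss equation $\bar R = H^2-\|h\|^2 = -\|h\|^2$ in a flat ambient, and cylindrical symmetry of $U$ to collapse $\bar R_{ij}x_ix_j$ onto a multiple of $x_1^2$. Finally, the linear correction $-H_ix_ix_N$ paired with $U_\varepsilon^p$ vanishes since the integrand is odd in a tangential $x_i$, and all remaining cross couplings between metric corrections and $\psi_\varepsilon$ are $O(\varepsilon^3)$.

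Summing and factoring out $-\varepsilon^2\|h\|^2$ gives precisely the bracket $I_6$ defined in \eqref{def:I46}, proving \eqref{exp-ubd=0}. I expect two main obstacles. First, rigorously verifying that the cross term $\int U_\varepsilon^{p-1}\psi_\varepsilon(\cdots)\,dx$ and the $-H_ix_ix_N$ correction genuinely vanish: each would otherwise contribute at order $\varepsilon$ and destroy the expansion, and both rely essentially on $H=0$ together with the cylindrical symmetry of $U$ (Lemma \ref{lem:U}) and of $Z$ in \eqref{Zprop}. Second, controlling the cubic remainder $R$ in the regime $p = 2+2/N < 3$ of Theorem \ref{thm:H=0}, where $U_\varepsilon^{p-3}$ is unbounded; integrability of $x_N^{-s}U^{p-3}|\psi|^3$ near $x_N = 0$ holds since $p-s > -1$, and at infinity it follows from the decay \eqref{u-bound}, \eqref{Zprop} once $N$ is large (which is assumed).
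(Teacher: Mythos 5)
Your proposal is correct and follows essentially the same route as the paper's proof: Fermi coordinates give $\delta_\Omega=x_N$ and the first equality; the annulus contributes $o(\varepsilon^2)$ by the decay of $U$; on the inner half-ball you expand $\sqrt{\det g}$ via Lemma \ref{lem:exp-gE=0} and $(U_\varepsilon+\psi_\varepsilon)^p$ binomially, kill the order-$\varepsilon$ linear term and the $-H_ix_ix_N$ term using $H=0$ and cylindrical symmetry, extract the order-$\varepsilon^2$ quadratic contribution via \eqref{marques}, and pair $U^p$ with the $-\tfrac12\|h\|^2x_N^2$ and $-\tfrac16\bar R_{ij}x_ix_j$ metric corrections (with $\bar R=-\|h\|^2$ from Gauss). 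Your worry about the cubic remainder when $p<3$ is resolved exactly as you suggest: the pointwise bound $|\psi_\varepsilon/U_\varepsilon|\le C\rho_0<1/2$ (the content of \eqref{1/2U}, coming from $|\psi|\lesssim |x|\,U$) makes the Lagrange remainder legitimate and converts it into the paper's $O(\varepsilon^3\int x_N^{-s}|x|^3U^p\,dx)=O(\varepsilon^3)$.
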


\begin{proof}
Via Fermi coordinates, the distance function $\delta_\Omega$ is just $x_N$.
Applying Lemma \ref{lem:exp-gE=0},
\begin{align*}
\begin{split}
\int_{B_{\rho_0}^+} \frac{ u^{p}}{x_N^s} dvol_{g}
=&  \int_{B_{\rho_0}^{+}} x_N^{-s} (U_{\varepsilon}+\psi_\varepsilon)^{p} \sqrt{\operatorname{det}(g)} dx \\
= &\int_{B_{\rho_0 / \varepsilon}^{+}} x_N^{-s} (U+\varepsilon\psi)^{p} dx-\frac12\varepsilon^2\|h\|^2  \int_{B_{\rho_0 / \varepsilon}^{+}} x_N^{-s+2} (U+\varepsilon\psi)^{p} dx\\
&-\varepsilon^2H_i\int_{B_{\rho_0/\varepsilon}^+}x_N^{-s+1}x_i(U+\varepsilon\psi)^{p}dx-\frac16\bar R_{ij}\varepsilon^2\int_{B_{\rho_0/\varepsilon}^+}x_N^{-s}x_ix_j(U+\varepsilon\psi)^{p}dx\\
&+o\left(\varepsilon^2\int_{B_{\rho_0 / \varepsilon}^{+}} |x|^2x_N^{-s} (U+\varepsilon\psi)^{p} dx\right)=\tilde J_1+\tilde J_2+\tilde J_3+\tilde J_4+o(\varepsilon^2),
\end{split}
\end{align*}
where the last term is estimated using \eqref{u-bound},  $p>2$ and $N\geq 3$,
\begin{align*}
\begin{split}
\int_{B_{\rho_0 / \varepsilon}^{+}} |x|^2x_N^{-s} U^{p} dx\leq C\int_{B_{\rho_0 / \varepsilon}^{+}}(1+|x|)^{2-s-(N-1)p}dx\leq C\int_0^{\rho_0/\varepsilon}(1+r)^{2-\frac{Np}{2}}dr\leq C,
\end{split}
\end{align*}
and the fact $\varepsilon\psi=O(\varepsilon|x|)U\leq \rho_0 U$ in $B_{\rho_0/\varepsilon}^+$ which follows from
 \eqref{u-bound} and \eqref{psi-bound}.
 
For $\tilde J_1$, by $(U+\varepsilon\psi)^p=U^p+p\varepsilon U^{p-1}\psi+\frac{p(p-1)}{2}\varepsilon^2 U^{p-2}\psi^2+O(\varepsilon^3 U^p)$, 
we have
\begin{align}\label{tilde-J-1}
    \tilde J_1
    &=\int_{B_{\rho_0/\varepsilon}^+}x_N^{-s}U^p+\frac{p(p-1)}{2}\varepsilon^2\int_{B_{\rho_0/\varepsilon}^+}x_N^{-s}U^{p-2}\psi^2dx+O\left(\varepsilon^3\int_{B_{\rho_0/\varepsilon}^+}x_N^{-s}|x|^3U^p\right)\nonumber\\
    &=\int_{\Rnp}x_N^{-s}U^p+\frac{p(p-1)}{2}\varepsilon^2\int_{\Rnp}x_N^{-s}U^{p-2}\psi^2dx+O(\varepsilon^3)+O(\varepsilon^{\frac{Np}{2}}\rho_0^{-\frac{Np}{2}}),
\end{align}
where we have used the following fact, which holds by the symmetry of $U$ and $\psi$ and $H=0$
\begin{align*}
    \int_{B_{\rho_0/\varepsilon}^+}x_N^{-s}U^{p-1}\psi dx=0.
\end{align*}
For the second term on the right hand of \eqref{tilde-J-1}, applying \eqref{marques}, we have
\begin{align*}
    \int_{\Rnp}x_N^{-s}U^{p-2}\psi^2dx=A^2\int_{\Rnp}x_N^{2-s}(h_{ij}x_ix_j)^2U^{p-2}Z^2dx=\frac{2A^2\|h\|^2}{N^2-1}\int_{\Rnp}x_N^{2-s}r^4U^{p-2}Z^2dx.
\end{align*}
Back to \eqref{tilde-J-1}, it holds that
\begin{align*}
    \tilde{J}_1=\int_{\Rnp}x_N^{-s}U^p+p(p-1)\frac{A^2\|h\|^2}{N^2-1}\varepsilon^2\int_{\Rnp}x_N^{2-s}r^4U^{p-2}Z^2dx+O(\varepsilon^3).
\end{align*}

For $\tilde J_2$, similarly with $\tilde{J}_1$, we have
\begin{align*}
    \tilde J_2=&-\frac12\|h\|^2\varepsilon^2\int_{B_{\rho_0/\varepsilon}^+}x_N^{-s+2}(U+\varepsilon\psi)^pdx\\
    =&-\frac12\|h\|^2\varepsilon^2\int_{B_{\rho_0/\varepsilon}^+}x_N^{-s+2}U^pdx+O\left(\varepsilon^4\int_{B_{\rho_0/\varepsilon}^+}x_N^{-s+2}U^pdx\right)\\
    =&-\frac12\|h\|^2\varepsilon^2\int_{\Rnp}x_N^{-s+2}U^pdx+O(\varepsilon^{\frac{Np}{2}}\rho_0^{2-\frac{Np}{2}})+O(\varepsilon^4).
\end{align*}

For $\tilde J_3$, since $\int_{B_{\rho_0/\varepsilon}^+}x_N^{1-s}x_i U^pdx=0$ due to the  cylindrical symmetry of $U$, we have 
\begin{align*}
\tilde J_3= O(\varepsilon^3\int_{B_{\rho_0/\varepsilon}^+}x_N^{1-s}|x|U^{p}dx)=O(\varepsilon^3).
\end{align*}
By the Gauss curvature equation, since the ambient metric is Euclidean, one has $\bar R=-\|h\|^2$. Therefore,
\begin{align*}
\tilde J_4=& -\frac16\bar R_{ij}\varepsilon^2\int_{B_{\rho_0/\varepsilon}^+}x_N^{-s}x_ix_j(U+\varepsilon\psi)^{p}dx\\
=&-\frac{1}{6}\bar R\varepsilon^2\int_{B_{\rho_0/\varepsilon}^+}x_N^{-s}x_1^2U^{p}dx+O(\varepsilon^3\int_{B_{\rho_0/\varepsilon}^+}x_N^{-s}x_1^2U^{p}dx)\\
=&\,\frac{1}{6}\|h\|^2\varepsilon^2\int_{\Rnp}x_N^{-s}x_1^2U^{p}dx+O(\varepsilon^{\frac{Np}{2}}\rho_0^{2-\frac{Np}{2}})+O(\varepsilon^3).
\end{align*}
On $B_{2 \rho_0}^{+}-B_{\rho_0}^{+}$, we observe that
\begin{align*}
\begin{split}
\int_{B_{2 \rho_0}^{+}-B_{\rho_0}^{+}} x_N^{-s} U^{p} dvol_{g_E} &\leqslant C  \int_{B_{2 \rho_0}^{+}-B_{\rho_0}^{+}} x_N^{-s} U_{\varepsilon}^{p} dx=C  \int_{B_{2 \rho_{0 }/\varepsilon}^{+}-B_{\rho_0 / \varepsilon}^{+}} x_N^{-s}(1+|x|)^{-Np} dx \\
& =C \left(\frac{\rho_0}{\varepsilon}\right)^{-\frac{Np}{2}}=C \varepsilon^{\frac{Np}{2}}  \rho_0^{-\frac{Np}{2}}=o(\varepsilon^2) 
\end{split}
\end{align*}
because $p>2$ and $N\geq3$.
Combining the inequalities of $\tilde J_1$ to $\tilde J_4$ and the above, we have 
\begin{align*}
\int_{B_{2\rho_0}^+}\frac{u^{p}}{x_N^s}dvol_{g}= \int_{\Rnp} x_N^{-s}U^{p}dx-\varepsilon^2 \|h\|^2I_6+o(\varepsilon^2),
\end{align*}
where $I_6$ is defined in \eqref{def:I46}. This completes the proof.
\end{proof}

Now we can prove Theorem \ref{thm:H=0}.
\begin{proof}[Proof of Theorem \ref{thm:H=0}]
Suppose the origin is on $\partial \Omega$ such that its mean curvature is zero and $\|h\|\neq 0$. Set up the Fermi coordinates $\Phi$ near the origin. Define $u$ as \eqref{def:u=0} and $\tilde u=u\circ \Phi^{-1}$. 
We shall show $\int_{\Omega}|\nabla \tilde u|_{g_E}^2dvol_{g_E}<\mu_p(\Rnp)(\int_{\Omega} \delta_\Omega^{-s} \tilde u^{p}dvol_{g_E})^{\frac{2}{p}}$ if $\varepsilon$ is chosen small enough.

Using Lemma \ref{lem:F-1=0} and Lemma \ref{lem:F-2=0}, we need to show
\begin{align*}
\int_{\Rnp}|\nabla U|^2dx+\varepsilon^2 \|h\|^2I_5+o(\varepsilon^2)\leq \mu_p(\Rnp)\left(\int_{\Rnp} x_N^{-s}U^{p}dx-\varepsilon^2 \|h\|^2I_6+o(\varepsilon^2)\right)^{\frac{2}{p}}.
\end{align*}
Since $U$ satisfies \eqref{U-eqn}, then $\int_{\Rnp}|\nabla U|^2dx=\int_{\Rnp} x_N^{-s}U^{p}dx=\left(\mu_p(\Rnp)\right)^{\frac{p}{p-2}}$. Then it suffices to show 
\begin{align*}
\|h\|^2I_5< \frac{2}{p}(-\|h\|^2I_6)=-\frac{2}{p}\|h\|^2I_6.
\end{align*}
Note that $\|h\|\neq 0$, then we only need to show $I_5+\frac{2}{p}I_6<0$. Unfortunately, in this case, we do not have a nice Pohozaev identity.

Choosing $Z(r,x_N)=r^{-1}\partial_rU(r,x_N)$ and using \eqref{def:I35} and \eqref{def:I46},  it leads to 
\begin{align*}
    I_5+\frac{2}{p}I_6&=I_3+\frac{2}{p}I_4+\frac{8A}{N^2-1}\int_{\Rnp}x_N^2[rU_r-r^2U_{rr}]Zdx\\
     &\quad+\frac{2A^2}{N^2-1}\int_{\Rnp}x_N^2r^4|\nabla Z|^2 dx-2(p-1)\frac{A^2}{N^2-1}\int_{\Rnp}x_N^{2-s}r^4U^{p-2}Z^2.
\end{align*}

We want to find $A$ such that $I_5+\frac{2}{p}I_6<0$. It suffices to have the discriminant $D_N>0$, where 
\begin{align*}
\begin{split}
D_N=&\frac{8}{N^2-1}\left(\int_{\Rnp}x_N^2[rU_r-r^2U_{rr}]Zdx\right)^2-(I_3+\frac{2}{p}I_4)\\
&\hspace{2cm}\times \left(\int_{\Rnp}[x_N^2r^4|\nabla Z|^2-(p-1)x_N^{2-s}r^4U^{p-2}Z^2]dx\right)\
\end{split}
\end{align*}


\begin{enumerate}
    \item If $p=2+\frac{2}{N}$, we have 
\begin{align*}
U(x',x_N)=(2N)^{\frac{N}{2}}\frac{x_N}{[(1+x_N)^2+|x'|^2]^{N/2}}
\end{align*} 
Using Mathematica \cite{sun2023}, one can compute that 
\begin{align*}
    I_3+\frac{2}{p}I_4=-\frac{\sqrt{\pi } N^N ((N-26) N-8) \Gamma \left(\frac{N+1}{2}\right)}{4 (N-2) (N-1)^2 (N+1) \Gamma \left(\frac{N}{2}+2\right)}\omega_{N-2}.
\end{align*}
Let $Z(r,x_N)=r^{-1}\partial_r U(r,x_N)$.
\begin{align*}
    D_N=\frac{\pi  N^{2 N+1} (N+2) ((N-14) N-56) \Gamma \left(\frac{N+3}{2}\right)^2}{4 (N-2)^2 \left(N^2-1\right)^3 \Gamma \left(\frac{N}{2}+2\right)^2}\omega_{N-2}^2 
\end{align*}
It is positive when $N\geq 18$. Therefore we can choose $A$ such that $I_5+\frac{2}{p}I_6<0$.

\item If $p=2+\frac{4}{N}$, we have 
\begin{align*}
U(x',x_N)=(N(N+2))^{\frac{N}{4}}\frac{x_N}{[1+x_N^2+|x'|^2]^{N/2}}.
\end{align*}
Using Mathematica, one can verify that 
\begin{align*}
    I_3+\frac{2}{p+1}I_4=\frac{\pi  2^{-N-2} (N (N+2))^{N/2} (5 N+4)}{(N-2) \left(N^2-1\right)}\omega_{N-2}
\end{align*}
It is never negative for $N\geq 3$. Let $Z=r^{-1}\partial_r U(r,x_N)$,
\begin{align*}
    D_N=-\frac{\pi ^2 2^{-2 N-3} N (N (N+2))^N (N+8) }{(N-2)^2 \left(N^2-1\right)}\omega_{N-2}^2
\end{align*}
It is never positive when $N\geq 3$. Then we could not choose $A$ in this case.
\end{enumerate}

One can play around with the Mathematica code and search for new $Z$ such that $D_N>0$. Furthermore, one can choose another perturbation $\psi$, such that $I_5+\frac{2}{p}I_6<0$.


\end{proof}

\section{Non-existence of minimizers for domains near balls}\label{sec:nearball}

 In this section, we give the proof of Theorem \ref{thm:c-ball}. First, we introduce the following lemma. 
\begin{lemma}\label{lem:comp}
Let $(M,g)$ be a $C^2$ Riemannian manifold with compact boundary. Assume the Riemann curvature of $g$ is bounded in absolute value by $K_1$. Assume $s\in (0,2)$, $p\in [2,\infty)$. Suppose that $u\in C^2(M)\cap H^1_0(M)$ satisfies 
\begin{align*}
    -\Delta_g u\leq \delta^{-s}u^{p-1} \text{and }0<u<K_2 \text{ in }M,\quad  u=0 \text{ on }\partial M
\end{align*} 
Here $\Delta_g$ is the Laplace-Beltrami operator and $\delta$ is the distance function to the boundary under metric $g$. Then for any $\alpha\in (0,1)$, there exists $C=C(K_1,K_2,\alpha)$ and $\beta=\beta(K_1,K_2,\alpha)$ such that 
\begin{align*}
    u(x)\leq C\delta(x)^\alpha, \quad \text{for}\quad x\in M_\beta
\end{align*}
where $M_\beta=\{x\in M:\delta(x)<\beta\}$.
\end{lemma}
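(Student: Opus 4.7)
My plan is to construct a boundary barrier of the form $w=A\delta^\alpha$ in a tubular neighborhood of $\partial M$, use the weak maximum principle to deduce $u\leq w$, and then bootstrap the exponent $\alpha$ to any value in $(0,1)$.

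First, since $\partial M$ is compact and the Riemann curvature is bounded by $K_1$, there exists $\beta_0=\beta_0(K_1)>0$ on which Fermi coordinates $(x',x_N)$ are defined on $M_{\beta_0}=\{\delta<\beta_0\}$, with $\delta(x)=x_N$. On a function $v=v(x_N)$ depending only on the normal coordinate, the Laplace--Beltrami operator reduces to
\begin{equation*}
\Delta_g v(x_N)=v''(x_N)-H(x',x_N)\,v'(x_N),
\end{equation*}
where $H$ is the mean curvature of the parallel hypersurface $\{\delta=x_N\}$, uniformly bounded in terms of $K_1$. Plugging in $w_\alpha=A\delta^\alpha$ for $\alpha\in(0,1)$, I obtain
\begin{equation*}
-\Delta_g w_\alpha = A\alpha(1-\alpha)\delta^{\alpha-2} + A\alpha H\delta^{\alpha-1} \ \geq\ \tfrac{1}{2}A\alpha(1-\alpha)\,\delta^{\alpha-2}
\end{equation*}
whenever $\delta\leq\beta(\alpha,K_1)$ is small enough to absorb the lower-order term.

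Next comes the bootstrap. Assume inductively $u\leq C_n\delta^{\alpha_n}$ in $M_{\beta_n}$, starting from the trivial bound $\alpha_0=0$, $C_0=K_2$ (from $u<K_2$). Then in $M_{\beta_n}$,
\begin{equation*}
-\Delta_g u\ \leq\ \delta^{-s}u^{p-1}\ \leq\ C_n^{p-1}\,\delta^{\alpha_n(p-1)-s}.
\end{equation*}
Pick a target $\alpha_{n+1}\in(0,1)$ satisfying $\alpha_{n+1}\leq 2-s+\alpha_n(p-1)$, choose $\beta_{n+1}\in(0,\beta_n]$ so small that the leading term $\tfrac12 A\alpha_{n+1}(1-\alpha_{n+1})\delta^{\alpha_{n+1}-2}$ dominates $C_n^{p-1}\delta^{\alpha_n(p-1)-s}$ on $M_{\beta_{n+1}}$, and take $A$ large enough that also $A\beta_{n+1}^{\alpha_{n+1}}\geq K_2$. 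Then $-\Delta_g(u-w_{\alpha_{n+1}})\leq 0$ in the open set $\{0<\delta<\beta_{n+1}\}$, while $u-w_{\alpha_{n+1}}\leq 0$ on the inner boundary $\{\delta=\beta_{n+1}\}$ (by $u\leq K_2\leq w$) and on $\partial M$ (by $u=w=0$). The weak maximum principle gives $u\leq C_{n+1}\delta^{\alpha_{n+1}}$ in $M_{\beta_{n+1}}$, with $C_{n+1}=A$. Since $p-1\geq 1$ and $2-s>0$, the recursion $\alpha_{n+1}= 2-s+\alpha_n(p-1)$ surpasses any prescribed $\alpha\in(0,1)$ in finitely many steps (at most of order $1/(2-s)$); truncating at the last step yields the desired $\alpha$ together with constants $C$ and $\beta$ depending only on $K_1$, $K_2$, $\alpha$.

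The principal obstacle is the bookkeeping at each iteration: the constants $A$, $\beta_{n+1}$ must be chosen so that the supersolution inequality and the domination $w\geq u$ on the inner parallel surface $\{\delta=\beta_{n+1}\}$ hold simultaneously, and one must keep track of how they depend on the previous step's constants. A secondary technical point is to justify the maximum principle despite $w$ being only H\"older continuous up to $\partial M$; this is handled by working on the open set $\{0<\delta<\beta_{n+1}\}$, where both $u$ and $w$ are $C^2$, and appealing to continuity up to $\partial M$ where both vanish. When $s\in(0,1)$ this bootstrap terminates after a single step, so the difficulty is real only in the range $s\in[1,2)$ coupled with $p$ close to $2$.
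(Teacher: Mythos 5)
Your construction of a subsolution/supersolution pair and the bootstrap $\alpha_{n+1}\le 2-s+\alpha_n(p-1)$ are sound in spirit, and they do differ genuinely from the paper: the paper makes a \emph{single} comparison, not an iteration. But the step where you invoke the maximum principle on $\{0<\delta<\beta\}$ has a real gap, and it is precisely the gap the paper's argument is built to avoid.

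The difficulty is the boundary $\partial M$. You say both $u$ and $w$ ``vanish by continuity up to $\partial M$,'' but nothing in the hypotheses gives $u\in C(\overline M)$; $u\in C^2(M)\cap H^1_0(M)$ only controls $u$ in the interior and gives a zero trace, not a pointwise limit. So the classical maximum principle based on $\limsup_{x\to\partial M}(u-w)\le 0$ is not available. If one instead tries the weak maximum principle by testing $\Delta_g(u-w)\ge 0$ against $(u-w)^+$, one needs $(u-w)^+\in H^1_0(\{0<\delta<\beta\})$. But for the barrier $w=A\delta^\alpha$ one has $|\nabla w|\sim\delta^{\alpha-1}$, which is \emph{not} square-integrable near $\partial M$ when $\alpha\le 1/2$; in the regime $s\in[3/2,2)$ the first bootstrap step forces exactly $\alpha_1<2-s\le 1/2$, so the required Sobolev membership fails and the comparison cannot be justified this way. (For $s<3/2$ one can keep every $\alpha_n>1/2$ and the weak form does go through after checking $0\le (u-w)^+\le u$, but this still leaves a nontrivial range of $s$ uncovered.)

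The paper circumvents this differently. It does not use the plain Laplacian. Since $s<2$, $p\ge 2$, $0<u<K_2$, for each $\alpha\in(0,1)$ there is $\beta$ so that $\delta^{-s}u^{p-1}\le \alpha(1-\alpha)\delta^{-2}u$ on $M_\beta$; hence $u$ is a \emph{subsolution of the Hardy--Schr\"odinger operator} $L=-\Delta_g-\alpha(1-\alpha)\delta^{-2}$. It then checks that $w=\delta^\alpha-\delta/2$ is a supersolution of $L$ near $\partial M$ (the $-\delta/2$ correction is what makes $Lw>0$; $L\delta^\alpha$ alone has its leading terms cancel). With these in hand it invokes a tailored comparison principle for $L$ (\cite[Lemma 8]{marcus1998best}), whose applicability rests on the additional input
\[
\limsup_{\beta\to0}\int_{M_\beta}\delta^{-2}u^2\,dvol_g\le C\limsup_{\beta\to0}\int_{M_\beta}|\nabla u|_g^2\,dvol_g=0,
\]
itself a consequence of the Hardy inequality near the boundary (Brezis--Marcus). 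That integral condition is the substitute for pointwise vanishing of $u$ at $\partial M$, and it is exactly what your proposal is missing. In short: your bootstrap is a nice idea and would work under an extra hypothesis $u\in C(\overline M)$, $u|_{\partial M}=0$, but to prove the lemma as stated you need the linearization to $L$ plus the Marcus--Shafrir comparison machinery (or an equivalent device) to make the maximum-principle step rigorous; once you have that, a single comparison suffices and the bootstrap becomes unnecessary.
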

\begin{proof}
Fix any $\alpha\in (0,1)$.  Since $s<2$, $p\geq 2$ and $0<u<K_2$, there exists $\beta=\beta(K_2, \alpha)$ such that $\delta^{-s}u^{p-1}\leq \alpha(1-\alpha)\delta^{-2}u$ in $M_\beta$. Therefore $Lu:=-\Delta_g u -\alpha(1-\alpha)\delta^{-2}u\leq 0$ in $M_\beta$. That is, $u$ is a subsolution of $L$ in $M_\beta$. By Hardy's inequality near the boundary (see \cite[Lemma 1.2]{brezis1997hardy}), we also have 
    \begin{align*}
        \limsup_{\beta\to 0}\int_{M_\beta}\delta^{-2}u^2dvol_g\leq C\limsup_{\beta\to 0}\int_{M_\beta}|\nabla u|_g^2dvol_g= 0.
    \end{align*}

    Denote $w(x)=\delta^\alpha(x)-\delta(x)/2$. Using $|\nabla\delta|_g=1$ and $\Delta_g \delta^\alpha(x)=\alpha\delta(x)^{\alpha-1}\Delta_g \delta(x)+\alpha(\alpha-1)\delta(x)^{\alpha-2}|\nabla\delta(x)|_g^2$, one has 
    \begin{align}
        Lw=-\alpha \delta(x)^{\alpha-1}\Delta_g\delta(x)+\frac12\Delta_g \delta(x)+\frac{\alpha(1-\alpha)}{2}\delta^{-1}(x).
    \end{align}
    Since $|\Delta_g \delta|$ is uniformly bounded near the boundary, then $Lw\geq 0$
    for $x\in M_\beta$ provided $\beta=\beta(K_1,K_2,\alpha)$ is small enough. That is, $w$ is a supersolution of $L$ on $M_\beta$.

    We can apply the comparison principle of $L$ (see \cite[Lemma 8]{marcus1998best}, whose proof is done on Euclidean domains but can be generalized to the manifold setting with minor modifications) to see there exists $C=C(K_1,K_2,\beta)$ such that $u(x)\leq Cw(x)$ for $x\in M_{\beta/2}$.

\end{proof}

Now, we turn to Theorem \ref{thm:c-ball}. We give the definition of $\varepsilon$-close, and give a more general version of Theorem \ref{thm:c-ball}.

\begin{definition}\label{def:C2close}
Assume $\Omega_1$ and $\Omega_2$ are bounded domains in $\Rn$. We call that they are $C^2$-diffeomorphic to each other up to the boundary if there exists a $C^2$-diffeomorphism $F:\bar\Omega_1\to \bar\Omega_2$ such that $F(\bar\Omega_1)=\bar\Omega_2$. We call that they are $\varepsilon$-close in $C^2$ sense, if after a suitable translation and rotation of $\Omega_1$, 
\[\|F-id\|_{C^2}\leq \varepsilon.\]
\end{definition}

\begin{theorem}\label{thm:o-non}
    Let $\Omega_0$ be a $C^2$ bounded domain in $\Rn$  with mean curvature $H>0$ everywhere on the boundary. Assume $N\geq 3$, $p$ and $s$ satisfy \eqref{assum-pnu}. Suppose $\mu_p(\Omega_0)$ is not attainable, then for any domain $\Omega$ which is sufficiently close to $\Omega_0$ in $C^2$-sense, $\mu_p(\Omega)$ is not attainable either.
\end{theorem}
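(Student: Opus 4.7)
I argue by contradiction. Assume there is a sequence $\Omega_n\to\Omega_0$ in the $C^2$ sense of Definition \ref{def:C2close} and, for each $n$, a positive minimizer $u_n\in H^1_0(\Omega_n)$ of $\mu_p(\Omega_n)$ normalized by $\int_{\Omega_n}\delta_{\Omega_n}^{-s}u_n^p=1$. Transporting test functions through the diffeomorphisms $F_n$ and $F_n^{-1}$ shows $\mu_p(\Omega_n)\to\mu_p(\Omega_0)$. By Theorem \ref{thm:WangZhu2}(2) applied to the $C^2$ boundary $\partial\Omega_0$, one has $\mu_p(\Omega_0)\le\mu_p(\Rnp)$, and the non-attainment hypothesis together with Theorem \ref{thm:WangZhu2}(3) forces $\mu_p(\Omega_0)=\mu_p(\Rnp)$. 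Consequently $\tilde u_n:=u_n\circ F_n$ is a minimizing sequence for $\mu_p(\Omega_0)$ with no strongly convergent subsequence in $H^1_0(\Omega_0)$, since any strong limit would attain $\mu_p(\Omega_0)$. Because $\delta_{\Omega_0}^{-s}$ is bounded on interior compacta, the compact embedding $H^1\hookrightarrow L^p$ rules out interior concentration, so $\tilde u_n$ must concentrate at some $x_0\in\partial\Omega_0$.

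\textbf{Blow-up analysis.} Pick $x_n\in\Omega_n$ with $F_n^{-1}(x_n)\to x_0$ and $u_n(x_n)=\max u_n=:M_n\to\infty$, and set $\varepsilon_n>0$ by $\varepsilon_n^{(N-2)/2}M_n=\|U\|_{L^\infty}$, where $U$ is the extremal of Theorem \ref{thm:WangZhu}. The rescaled profile $v_n(y):=\varepsilon_n^{(N-2)/2}u_n(x_n+\varepsilon_n y)$ solves the rescaled Euler--Lagrange equation on $\varepsilon_n^{-1}(\Omega_n-x_n)$ with weight converging to $x_N^{-s}$. Using Lemma \ref{lem:comp} together with the uniform $C^2$ regularity of $\partial\Omega_n$, one checks that $\delta_{\Omega_n}(x_n)/\varepsilon_n$ stays bounded, so the rescaled domains exhaust a half-space $\mathcal H$ tangent to $\partial\Omega_0$ at $x_0$. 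Standard elliptic regularity and the classification in Theorem \ref{thm:WangZhu} then yield $v_n\to U$ in $C^2_{\mathrm{loc}}(\overline{\mathcal H})$ after rotating $\mathcal H$ to $\Rnp$.

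\textbf{Energy expansion and contradiction.} In Fermi coordinates centered at the boundary foot $P_n\in\partial\Omega_n$ of $x_n$, decompose $u_n=\varphi U_{\varepsilon_n,x_n}+w_n$ with $\varphi$ the cut-off of Section \ref{sec:H<0}. Non-degeneracy of $U$ (proved in the appendix) allows one to modulate the free parameters of $U_{\varepsilon_n,x_n}$ so that $w_n$ is $H^1$-orthogonal to the tangent space to the manifold of extremals of $\mu_p(\Rnp)$ at $U_{\varepsilon_n,x_n}$, and coercivity of the linearized operator on this orthogonal complement yields $\|w_n\|_{H^1}=o(\varepsilon_n^{1/2})$. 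Plugging this decomposition into $J_p^{\Omega_n}[u_n]$ and invoking the Fermi-coordinate expansions of Lemmas \ref{lem:F-1} and \ref{lem:F-2} (with the cross terms in $w_n$ absorbed into the error via orthogonality and Cauchy--Schwarz), one obtains
\[
\mu_p(\Omega_n)\;=\;J_p^{\Omega_n}[u_n]\;=\;\mu_p(\Rnp)+\varepsilon_n\,H_{\partial\Omega_n}(P_n)\,c_0+o(\varepsilon_n),
\]
for a universal constant $c_0>0$: indeed, by the Pohozaev identity of Lemma \ref{lem:Poh}, the correction is a positive multiple of $I_1+\tfrac{2}{p}I_2=\tfrac{s}{(N-1)p}\,I_2>0$. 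The $C^2$-convergence of the domains forces $H_{\partial\Omega_n}(P_n)\to H_{\partial\Omega_0}(x_0)>0$, so the right-hand side strictly exceeds $\mu_p(\Rnp)$ for all large $n$, contradicting the upper bound $\mu_p(\Omega_n)\le\mu_p(\Rnp)$ from Theorem \ref{thm:WangZhu2}(2). The main obstacle is precisely this \emph{reversed} expansion for the actual minimizer $u_n$: one must upgrade the $C^2_{\mathrm{loc}}$ blow-up convergence into a quantitative $H^1$-decomposition with remainder $o(\varepsilon_n^{1/2})$, which rests on the non-degeneracy of $U$ and a modulation argument on the finite-dimensional manifold of extremals of $\mu_p(\Rnp)$.
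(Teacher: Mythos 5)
Your proposal follows essentially the same route as the paper: pull back the minimizers to the fixed domain, establish via Lemma \ref{lem:comp} and rescaling that blow-up occurs at the boundary at a fixed distance scale, decompose the minimizer as a modulated bubble $c_tV_{x(t),\varepsilon(t),t}$ plus an orthogonal remainder, invoke the non-degeneracy of $U$ (Theorem \ref{thm:non-deg}) for coercivity, and then feed the Fermi-coordinate expansions of Lemmas \ref{lem:F-1}--\ref{lem:F-2} together with the Pohozaev identity of Lemma \ref{lem:Poh} to deduce $\mu_p(\Omega_t)>\mu_p(\Rnp)$ from $H>0$, contradicting the universal upper bound. One small point worth tightening: the remainder bound is obtained by combining coercivity with the discrepancy estimate $\|\Delta_{g_t}V_t+\delta_t^{-s}V_t^{p-1}\|_{L^{2N/(N+2)}}=O(\varepsilon)$, which gives $\|\nabla w_t\|=O(\varepsilon)$ (stronger than your $o(\varepsilon^{1/2})$) and hence $\int\delta_t^{-s}V_t^{p-1}w_t=O(\varepsilon^2)$, and the rescaled domains converge to the \emph{translated} half-space $\{x_N>-T\}$ with $T=\lim\delta_t(P_t)/\lambda_t\in(0,1]$ rather than the tangent half-plane at $x_0$.
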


\begin{proof}[Proof of Theorem \ref{thm:c-ball}]
The conclusion follows from Theorem \ref{thm:WangZhu2} part (4) and Theorem \ref{thm:o-non}.  
\end{proof}

\begin{proof}[Proof of Theorem \ref{thm:o-non}] The proof is similar to that given in \cite{PAN1998791}.
We argue by contradiction. Suppose there exists a sequence $\{\Omega_t\}$ of domains which converge to $\Omega_0$ in $C^2$-sense as $t\to 0$, such that $\mu_p(\Omega_t)$ is attained by $0<u_t\in H^1_0(\Omega_t)$. Then there exists $C^2$-diffeomorphism $F_t:\overline{\Omega_0}\to\overline{\Omega_t}$ with $\|F_t-id\|_{C^2}\to 0$ as $t\to 0$. After multiplying some constant, we can assume $u_t$ satisfies
\begin{align*}
\begin{cases}
\Delta u_t+\delta_t^{-s}u_t^{p-1}=0,\quad u_t>0, & \text{ in } \, \Omega_t,\\
u_t=0, & \text{ on } \, \partial \Omega_t,
\end{cases}
\end{align*}
where $\delta_t=\delta_{\Omega_t}$.

Via $F_t$, we can pull back $u_t$ and the Euclidean metric on $\Omega_t$ to $\Omega_0$, that is for $x\in \Omega_0$,
\begin{align*}
&v_t(x)=u_t(F_t(x)),\quad g_t(x)=F_t^*g_E,\\
&(g_t)_{ij}(x)=\frac{\partial F_t}{\partial x_i}\cdot\frac{\partial F_t}{\partial x_j}.
\end{align*} 
Thus $(\Omega_t,g_E)$ is equivalent to $(\Omega_0,g_t)$ viewed as a family of Riemannian manifolds with boundary.
Then the $\Delta=\Delta_{g_E}$ on $\Omega_t$ is equivalent to the Laplace-Beltrami operator $\Delta_{g_t}$ on $\Omega_0$. Due to the distance-preserving property, we shall abuse the notation $\delta_t$ to denote the distance function in $(\Omega_t,g_E)$ and $(\Omega_0,g_t)$ at the same time. 
Thus $v_t$ satisfies 
\begin{align}\label{vt-eqn}
\begin{cases}
\Delta_{g_t}v_t+\delta_t^{-s} v_t^{p-1}=0,\quad v_t>0, & \text{in }\, \Omega_0,\\
v_t=0, & \text{ on } \, \partial \Omega_0.
\end{cases}
\end{align}
Since $\|F_t-id\|_{C^2}\to 0$, then 
\begin{align*}
&\int_{\Omega_t}|\nabla u_t|^2dvol_{g_E}=\int_{\Omega_0} |\nabla v_t|_{g_t}^2dvol_{g_t}=(1+o(1))\int_{\Omega_0}|\nabla v_t|^2dvol_{g_E},\\
&\int_{\Omega_t}\delta_t^{-s}u_t^pdvol_{g_E}=\int_{\Omega_0}\delta_t^{-s}v_t^pdvol_{g_t}=(1+o(1))\int_{\Omega_0}\delta_0^{-s}v_t^p dvol_{g_E}.
\end{align*}
Here and in the following, $|\cdot|$ is the norm in Euclidean metric, $|\cdot|_{g_t}=\big((g_t)^{ij}\frac{\partial\cdot}{\partial x_i}\frac{\partial\cdot}{\partial x_j}\big)^{\frac12}$ is the norm under the metric $g_t$, $dvol_{g_t}=\sqrt{\text{det}((g_t)_{ij})}dvol_{g_E}$, and ${\color{blue}\delta_0=\delta_{\Omega_0}}$. 
Since $u_t$ is a minimizer of $\mu_p(\Omega_t)\leq \mu_p(\Rnp)$ and $\mu_p(\Omega_0)$ is not attainable, then 
\begin{align*}
\mu_p(\Rnp)=\mu_p(\Omega_0)\leq \liminf_{t\to 0}J_p^{\Omega_0}[v_t]\leq \limsup_{t\to 0}J_p^{\Omega_0}[v_t]=\limsup_{t\to 0}J_p^{\Omega_t}[u_t]\leq \mu_p(\Rnp).
\end{align*}
Here $J_p^\Omega$ is defined in \eqref{def:Ju}. It follows that 
\begin{align*}
    \lim_{t\to 0}J_p^{\Omega_0}[v_t]=\mu_p(\Omega_0)=\mu_p(\Rnp).
\end{align*}
Thus
\begin{align}
\begin{split}\label{u-v-p}
\lim_{t\to 0}\int_{\Omega_t}|\nabla u_t|^2dvol_{g_E}=\lim_{t\to 0}\int_{\Omega_0}|\nabla v_t|^2dvol_{g_E}=(\mu_p(\Rnp))^{\frac{p}{p-2}},\\
\lim_{t\to 0}\int_{\Omega_t}\delta_t^{-s}u_t^p dvol_{g_E}=\lim_{t\to 0}\int_{\Omega_0}\delta_0^{-s}v_t^pdvol_{g_E}=(\mu_p(\Rnp))^{\frac{p}{p-2}}.
\end{split}
\end{align}

The proof is lengthy, we divide it into 6 steps. 

Step 1: We claim that $\|v_t\|_{L^\infty(\Omega_0)}\to+\infty $ as $t\to 0$.

Suppose not. Then after passing to a subsequence we have that $v_t\rightharpoonup v_0$ weakly in $H^1_0(\Omega_0)$ and $v_0$ satisfies $\Delta v_0+\delta_0^{-s}v_0^{p-1}=0$ weakly in $\Omega_0$. If $v_0\not \equiv 0$, then $\mu_p(\Omega_0)$ is attained by $v_0$ for
\begin{align}\label{v0min}
J^{\Omega_0}[v_0]=\left(\int_{\Omega_0}\delta_0^{-s}v_0^pdvol_{g_E}\right)^{\frac{p-2}{p}}\leq \liminf_{t\to 0}\left(\int_{\Omega_0}\delta_0^{-s}v_t^p dvol_{g_t}\right)^{\frac{p-2}{p}}=\mu_p(\Omega_0).
\end{align}
This contradicts our assumption about $\mu_p(\Omega_0)$. 

Therefore $v_0=0$. By elliptic regularity theory, we also have $v_t\to 0$ in $C_{loc}^2(\Omega_0)$. Applying Lemma \ref{lem:comp} to $v_t$ on $(\Omega_0,g_t)$, there exist two uniform constants $\beta$ and $C$ which depend on $\sup_t\|v_t\|_{L^\infty(\Omega_0)}$ such that $|v_t(x)|\leq C \delta_t^{1/2}(x) $ for $x\in (\Omega_0)_{\beta}$. Combining the estimates in the interior and on the boundary, we have $v_t\to 0$ in $C^0(\overline{\Omega_0})$. This contradicts \eqref{u-v-p}.

\bigskip

Step 2: Let $P_t$ be a maximum point of $v_t$, such point exists because $v_t$ vanishes on the boundary. We claim that 
\begin{align}\label{d/l=1}
    \lim_{t\to 0}\frac{\delta_t(P_t)}{\lambda_t}=T\in (0,1],
\end{align}
where $ \lambda_t=(\|U\|_{L^\infty(\Rnp)}/\|v_t\|_{L^\infty(\Omega_0)})^{2/(N-2)}$ and $\delta_t(P)=\text{dist}_{g_t}(P,\partial\Omega_0)$ for $P\in \Omega_0$. In addition, $(0',T)\in\Rnp$ is one of the points where $U$ achieves the global maximum.

Define
\begin{align}\label{def:vhat}
    &\hat v_t(x)=\lambda_t^{(N-2)/2}v_t(P_t+\lambda_t x) \quad \text{ for } x\in \hat \Omega_t:=(\Omega_0-P_t)/\lambda_t,\\
    &\hat g_t(x)=\lambda_t^{-2}g_t(P_t+\lambda_t x)  \quad \text { and } \quad\hat \delta_t=\text{dist}_{\hat g_t}(\cdot,\partial\hat {\Omega}_t).\notag
\end{align}
Then $\hat g_t\to g_E$ uniformly in $C^1$ on $ \hat{\Omega}_t\cap K$ for any fixed compact set $K$.
Then $\hat v_t\in H^1_0(\hat \Omega_t)$ satisfies $\hat v_t(0)=\|U\|_{L^\infty(\Rnp)}$ where $U$ is defined in Theorem \ref{thm:WangZhu}. It follows from \eqref{vt-eqn} and \eqref{u-v-p} that $\hat v_t$ satisfies
\begin{align*}
  \begin{cases}
    \Delta_{\hat g_t} \hat v_t+\hat \delta_t^{-s}\hat v_t^{p-1}=0, \, \hat v_t>0, & \text{ in }\, \hat \Omega_t,\\
    \hat v_t=0, & \text{ on } \, \partial\hat \Omega_t,
  \end{cases}
\end{align*}
and as $t\to 0$,
 \begin{align}\label{hat-int}
    \int_{\hat \Omega_t}|\nabla \hat v_t|_{\hat g_t}^2dvol_{\hat g_t}=\int_{\Omega_0}|\nabla v_t|^2_{g_t}dvol_{g_t}=\int_{\Omega_t}|\nabla u_t|^2dvol_{g_E}\to (\mu_p(\Rnp))^{\frac{p}{p-2}},\\
    \int_{\hat \Omega_t}\hat \delta_t^{-s}\hat v_t^p dvol_{\hat g_t} = \int_{\Omega_0}\delta_t^{-s}v_t^pdvol_{g_t}=\int_{\Omega_t}\delta_t^{-s}u_t^{p}dvol_{g_E}\to (\mu_p(\Rnp))^{\frac{p}{p-2}},
\end{align}
 where $\hat \delta_t=\delta_{\hat \Omega_t}$.

If there exists a subsequence such that $\delta_t(P_t)/\lambda_t\to \infty$ as $t\to 0$, then $\hat v_t\to \hat v_0$ in $C_{loc}^2(\Rn)$, where $\hat v_0$ satisfies $\Delta \hat v_0=0$ in $\Rn$ and $\int_{\Rn}|\nabla \hat v_0|^2<\infty$. This implies $\hat v_0\equiv 0$. This contradicts with $\hat v_0(0)=\lim_{t\to 0}\hat v_t(0)=\|U\|_{L^\infty(\Rnp)}>0$.

If there exists a subsequence $\delta_t(P_t)/\lambda_t\to 0$, then after suitable rotation, $\hat \Omega_t\to \Rnp$. Since $0<\hat v_t\leq \|U\|_{L^\infty(\Rnp)}$ in $\hat \Omega_t$ and the principal curvature of $\hat \Omega_t$ is uniformly bounded, then Lemma \ref{lem:comp} asserts that $\hat v_t\leq C\hat \delta_t^{1/2}$ near $\partial \hat \Omega_t$,  where $C$ is independent of $t$. This contradicts with $\hat v_t(0)=\|U\|_{L^\infty(\Rnp)}$ when $t$ is sufficiently small.

If there exists a subsequence $\delta_t(P_t)/\lambda_t\to T$ for some $T\in (0,\infty)$, then after suitable rotation,  $\hat \Omega_t\to \mathbb{R}^N_{+,T}=\{(x',x_N):x_N>-T\}$. Then there exists $\hat v_0\in  \mathcal{D}_0^{1,2}(\mathbb{R}^N_{+,T})$ such that $\hat v_t\rightharpoonup \hat v_0$ weakly in $\mathcal{D}_0^{1,2}(\mathbb{R}^N_{+,T})$ and $\hat v_t\to \hat v_0$ in $C_{loc}^2(\mathbb{R}^N_{+,T})$, 
Similar to \eqref{v0min}, one can show that $\hat v_0$ is a minimizer of $\mu_p(\mathbb{R}^N_{+,T})$  which attains the global maximum $\|U\|_{L^\infty(\Rnp)}$ at $0$. Using Theorem \ref{thm:WangZhu}, 
one has $\hat v_0(x)=U_{-Te_N,1}(x)=U(x+Te_N)$.  Furthermore, by Lemma \ref{lem:U}, $(0',T)$ is one of the points where $U$ achieves the global maximum and $T\in (0,1]$.

We also have the following facts. For any fixed $r_t\in (0,r_0)$ such that $\lim_{t\to 0}r_t/\lambda_t=\infty$, then 
\begin{align}\label{int=0}
	\lim_{t\to 0}\int_{ B_{r_t}(P_t)\cap \Omega_0}|\nabla v_t|_{g_t}^2 dvol_{g_t}=(\mu_p(\Rnp))^{\frac{p}{p-2}},\quad \lim_{t\to 0}\int_{\Omega_0\backslash B_{r_t}(P_t)}|\nabla v_t|_{g_t}^2 dvol_{g_t}=0.
\end{align}
In Fact, since $r_t/\lambda_t\to \infty$, then for any fixed $M$, by Fatou's lemma,
\begin{align*}
    \liminf_{t\to 0}\int_{B_{r_t}(P_t)\cap \Omega_0}|\nabla v_t|_{g_t}^2 dvol_{g_t}&=\liminf_{t\to 0}\int_{B_{r_t/\lambda_t}(0)\cap \hat \Omega_t}|\nabla \hat v_t|_{\hat g_t}^2dvol_{\hat g_t}\\
    &\geq \int_{B_M(0)\cap \mathbb{R}_{+,T}^N}|\nabla \hat v_0|^2dvol_{g_E}. 
\end{align*}
We have used the fact that $\hat g_t\to g_E$ locally uniformly.
Now letting $M\to \infty$, we have the right-hand side of the above converges to $(\mu_p(\Rnp))^{\frac{p}{p-2}}$. Thus
\begin{align*}
     \liminf_{t\to 0}\int_{B_{r_t}(P_t)\cap\Omega_0}|\nabla v_t|_{g_t}^2 dvol_{g_t}\geq(\mu_p(\Rnp))^{\frac{p}{p-2}}.
\end{align*}
Combining this with \eqref{hat-int}, it readily has \eqref{int=0}.



\bigskip

Step 3: In this step, we shall decompose $v_t$ to $c_tV_{x(t),\varepsilon(t),t}+w_t$, where $c_tV_{x(t),\varepsilon(t),t}$ is the best approximation, see Claim \ref{clm:inf}, and $w_t\in H^1_0(\Omega_0)$ is a small remainder.

By the compactness of $\partial \Omega_0$ and the metric $g_t$, we can fix a small $\rho_0$ such that each $Q\in \partial\Omega_0$ has a neighborhood in $(\Omega_0,g_t)$ which contains $B_{2\rho_0}^+(0)\subset \Rn$ in the Fermi coordinates $\Phi^t_Q$ at $Q$. See Section \ref{sec:H<0} for the definition of Fermi coordinates. Here the superscript $t$ indicates the metric $g_t$.

Let $Q_t\in\partial \Omega_0$ be the point such that $\text{dist}_{g_t}(P_t,\partial \Omega_0)=\text{dist}_{g_t}(P_t,Q_t)$. For any $x\in B_{\rho_0}\cap\partial\mathbb{R}^{N}_+$, $\varepsilon>0$, we define
\begin{align*}
V_{x,\varepsilon,t}(y)&=(\varphi U_{x,\varepsilon})\circ ((\Phi_{Q_t}^t)^{-1}(y)) ,\quad y\in (\Phi_{Q_t}^t)(B_{2\rho_0}^+),\\
U_{x,\varepsilon}(z)&=\varepsilon^{1-\frac{N}{2}}U\left(\frac{z-x}{\varepsilon}\right),\quad z\in \Rnp,
\end{align*}
where $\varphi$ is the cut off function used in \eqref{def:u}. Note that $V_{x,\varepsilon,t}$ is well defined in $\Omega_0$ because of the cutoff function. 

Introduce the following norm on $(\Omega_0,g_t)$.
\begin{align*}
    &\langle\nabla f,\nabla \tilde f\rangle_{g_t}=\int_{\Omega_0}g_t(\nabla f, \nabla \tilde f )dvol_{g_t}=\int_{\Omega_0}g^{ij}_t\frac{\partial f}{\partial y_i}\frac{\partial \tilde f}{\partial y_j}\sqrt{\det(g_t)}dy,\\
   &\|\nabla f\|_{g_t}=\sqrt{\langle\nabla f,\nabla f\rangle_{g_t}}.
\end{align*}
\begin{claim} 
\begin{align}\label{claim-limit}
    \|\nabla v_t-\nabla V_{0,\lambda_t,t}\|_{g_t}\to 0,\quad \text{ as } t\to 0.
\end{align}
\end{claim}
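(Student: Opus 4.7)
The plan is to upgrade the $C^2_{\mathrm{loc}}$ convergence of the rescaled sequence $\hat v_t$ from Step~2 to strong convergence in $\mathcal{D}^{1,2}$, show that the bubble $V_{0,\lambda_t,t}$ rescales to the same limit, and then subtract. This is the standard concentration-compactness template, adapted to the metric $g_t$ and the full-boundary weight.

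\emph{Strong convergence of $\hat v_t$.} Extending each $\hat v_t$ by zero outside $\hat\Omega_t$ gives a sequence bounded in $\mathcal{D}^{1,2}(\Rn)$ by \eqref{hat-int}; Step~2 identifies the weak limit as $\hat v_0(x) = U(x + Te_N)$, a minimizer of $\mu_p(\mathbb{R}^N_{+,T}) = \mu_p(\Rnp)$. By Theorem~\ref{thm:WangZhu},
\[
\int_{\mathbb{R}^N_{+,T}} |\nabla \hat v_0|^2\,dx = (\mu_p(\Rnp))^{p/(p-2)},
\]
which, in view of \eqref{hat-int} and $\hat g_t \to g_E$ in $C^1_{\mathrm{loc}}$, matches $\lim_{t\to 0} \int_{\hat\Omega_t} |\nabla \hat v_t|^2\,dx$ (Euclidean gradient). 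Weak plus norm convergence in the Hilbert space $\mathcal{D}^{1,2}$ then upgrades to strong convergence $\hat v_t \to \hat v_0$.

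\emph{The bubble rescales to the same limit.} Let $\hat V_t(y) := \lambda_t^{(N-2)/2} V_{0,\lambda_t,t}(P_t + \lambda_t y)$. In Fermi coordinates at $Q_t$, $V_{0,\lambda_t,t}$ equals $\varphi(|x|)\lambda_t^{1-N/2}U(x/\lambda_t)$ and $P_t$ has coordinates $(0',d_t)$ with $d_t = \delta_t(P_t)$. Since $g_t \to g_E$ in $C^2$, the chart $\Phi^t_{Q_t}$ is $C^2$-close to the identity, so for $\tilde y_t$ the Fermi coordinates of $P_t + \lambda_t y$ one has $\tilde y_t \to 0$ and $\tilde y_t/\lambda_t \to y + Te_N$ locally uniformly (using $d_t/\lambda_t \to T$ from \eqref{d/l=1}). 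Therefore
\[
\hat V_t(y) = \varphi(|\tilde y_t|)\, U(\tilde y_t/\lambda_t) \to U(y + Te_N) = \hat v_0
\]
locally in $C^2$. The decay estimates \eqref{u-bound} supply an $L^2$-integrable majorant for $|\nabla \hat V_t|$, so dominated convergence gives strong convergence $\hat V_t \to \hat v_0$ in $\mathcal{D}^{1,2}$.

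\emph{Conclusion.} The two strong convergences yield $\|\nabla(\hat v_t - \hat V_t)\|_{L^2} \to 0$. Since the rescaling $y=(x-P_t)/\lambda_t$ preserves the Dirichlet norm and $\hat g_t$ is $C^1$-close to $g_E$, this is equivalent to $\|\nabla v_t - \nabla V_{0,\lambda_t,t}\|_{g_t} \to 0$, which is \eqref{claim-limit}. The delicate step is the norm convergence for $\hat v_t$: it relies crucially on having exhausted the limiting energy $(\mu_p(\Rnp))^{p/(p-2)}$ exactly, which rules out any secondary bubble or loss of mass to infinity under the rescaling.
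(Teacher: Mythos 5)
Your proof is correct and follows essentially the same strategy as the paper's own argument: both combine the local identification of the blow-up limit from Step 2 with the exact exhaustion of the limiting energy $(\mu_p(\Rnp))^{p/(p-2)}$ in \eqref{hat-int} to forbid any escape of Dirichlet energy to infinity under the rescaling. The only organizational difference is that you factor through the intermediate limit $\hat v_0 = U(\cdot + Te_N)$, invoking the Radon--Riesz property of the Hilbert space $\mathcal{D}^{1,2}(\Rn)$ for $\hat v_t$ and dominated convergence for the rescaled bubble, whereas the paper splits each integral directly into a compact core (handled by $C^2_{\mathrm{loc}}$ convergence) and an $\epsilon$-small tail. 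One small imprecision worth fixing: when you convert $\int_{\hat\Omega_t}|\nabla\hat v_t|^2_{\hat g_t}\,dvol_{\hat g_t}$ into the Euclidean Dirichlet energy over the (expanding) domain $\hat\Omega_t$, convergence $\hat g_t\to g_E$ merely in $C^1_{\mathrm{loc}}$ is not enough; you need it uniformly on all of $\hat\Omega_t$. Fortunately that is available here, since $g_t=F_t^*g_E$ with $\|F_t-\mathrm{id}\|_{C^2}\to 0$ gives $\|\hat g_t-g_E\|_{C^1(\hat\Omega_t)}\to 0$ (the rescaling only improves the derivative bound), so the remainder of your argument is sound once this is noted.
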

\begin{proof}
Similar to \eqref{def:vhat}, define
\begin{align*}
    \hat V_{0,\lambda_t,t}(y)= \lambda_t^{\frac{N-2}{2}}V_{0,\lambda_t,t}(P_t+\lambda_t y).
\end{align*}
Note that $\|\nabla v_t-\nabla V_{0,\lambda_t,t}\|_{g_t}=\|\nabla \hat v_t-\nabla \hat V_{0,\lambda_t,t}\|_{\hat g_t}$. Since \eqref{d/l=1}, then $\hat V_{0,\lambda_t,t}(y)\to U_{-Te_N,1}(y)=U(y+Te_N)$ in $C_{loc}^2(\mathbb{R}_{+,T}^N)$. 
 Recall that Step 2 implies that $\hat v_t\to U_{-Te_N,1}$ in $C_{loc}^2(\mathbb{R}_{+,T}^N)$ and $\hat g_t\to g_E$ locally uniformly. Then for any compact set $K\subset \mathbb{R}_{+,T}^N$, one has 
\begin{align*}
    \lim_{t\to 0}\int_{K}|\nabla \hat v_t-\nabla \hat V_{0,\lambda_t,t}(y)|^2_{\hat g_t}dvol_{\hat g_t}=0.
\end{align*}
Given any $\epsilon>0$, we choose a compact set $K\subset \mathbb{R}_{+,T}^N$ such that $\int_{\mathbb{R}_{+,T}^N\setminus K}[|\nabla U_{-Te_N,1}|^2+U_{-Te_N,1}^2]dvol_{g_E}<\epsilon$. 
Using \eqref{hat-int} and $  \lim_{t\to 0}\int_{K}|\nabla \hat v_t|_{\hat g_t}^2dvol_{\hat g_t}=\int_{K}|\nabla U_{-Te_N,1}|^2dvol_{g_E}$, then
\begin{align*}
    \limsup_{t\to 0}\int_{\hat \Omega_t
    \setminus K}|\nabla \hat v_t|_{\hat g_t}^2dvol_{\hat g_t}<\epsilon.
\end{align*}
Direct computation shows that 
\begin{align*}
    \limsup_{t\to 0}\int_{\hat \Omega_t\setminus K}|\nabla \hat V_{0,\lambda_t,t}|_{\hat g_t}^2dvol_{\hat g_t}\leq C\epsilon+C\rho_0^{-2}\int_{\mathbb{R}_{+,1}^N\setminus K}U_{-Te_N,1}^2dvol_{g_E}\leq C(1+\rho_0^{-2})\epsilon.
\end{align*}
Combining the previous three inequalities, we have
\begin{align*}
    \limsup_{t\to 0}\|\nabla\hat v-\nabla \hat V_{0,\lambda_t,t}\|_{\hat g_t}^2\leq C(1+\rho_0^{-2})\epsilon.
\end{align*}
Since $\epsilon$ is arbitrary, then \eqref{claim-limit} is established.
\end{proof}

\begin{claim}\label{clm:inf}
If $t$ is sufficiently small, then the following infimum 
\begin{align*}
    l_t:=\inf\{\|\nabla v_t-c\nabla V_{x,\varepsilon,t}\|_{g_t}:c\in \mathbb{R}, \varepsilon\in (0,1),|x|\leq \rho_0, x\in\partial{\Rnp}\}
\end{align*}
can be achieved by some $c_tV_{x(t),\varepsilon(t),t}$ where 
\begin{align}\label{limit-123}
    c_t\to 1,\quad \frac{\varepsilon(t)}{\lambda_t}\to 1,\quad |x(t)|=o(\lambda_t). 
\end{align}
\end{claim}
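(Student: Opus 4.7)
The plan is to rescale the problem via $\tilde x=x/\lambda_t$, $\tilde\varepsilon=\varepsilon/\lambda_t$, reducing it to a best-bubble-approximation problem on $\mathbb{R}^N_{+,T}:=\{y_N>-T\}$, and then to exploit the non-degeneracy of the bubble $U_{-Te_N,1}$ in that limit. Setting $\hat V_{\tilde x,\tilde\varepsilon,t}(y):=\lambda_t^{(N-2)/2}V_{\lambda_t\tilde x,\lambda_t\tilde\varepsilon,t}(P_t+\lambda_t y)$, a change of variables gives
\[
\hat\ell_t(c,\tilde x,\tilde\varepsilon):=\|\nabla\hat v_t-c\nabla\hat V_{\tilde x,\tilde\varepsilon,t}\|_{\hat g_t}^2=\|\nabla v_t-c\nabla V_{\lambda_t\tilde x,\lambda_t\tilde\varepsilon,t}\|_{g_t}^2,
\]
so that $l_t^2=\inf\hat\ell_t$ over $(c,\tilde x,\tilde\varepsilon)\in\R\times(\partial\Rnp\cap B_{\rho_0/\lambda_t})\times(0,1/\lambda_t)$. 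Using the Fermi chart $\Phi_{Q_t}^t$ and the fact that $P_t$ has Fermi coordinates $(0',\delta_t(P_t))=(0',T\lambda_t(1+o(1)))$, a direct computation shows that for each fixed $(\tilde x,\tilde\varepsilon)\in\partial\Rnp\times(0,\infty)$ one has $\hat V_{\tilde x,\tilde\varepsilon,t}\to U_{\tilde x-Te_N,\tilde\varepsilon}$ strongly in $H^1$, locally uniformly in the parameters. Combined with the strong $H^1$-convergence $\hat v_t\to U_{-Te_N,1}$ (which upgrades the $C^2_{\mathrm{loc}}$ convergence from Step 2 via \eqref{claim-limit}), this yields $\hat\ell_t\to\hat\ell_0$ locally uniformly, where
\[
\hat\ell_0(c,\tilde x,\tilde\varepsilon):=\|\nabla U_{-Te_N,1}-c\nabla U_{\tilde x-Te_N,\tilde\varepsilon}\|_{L^2(\mathbb{R}^N_{+,T})}^2.
\]

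The functional $\hat\ell_0$ vanishes uniquely at $(c,\tilde x,\tilde\varepsilon)=(1,0,1)$, and its Hessian there is positive definite: the $(N{+}1)$-parameter family $\{c U_{\xi,\eta}\}_{c\in\R,\,\xi'\in\R^{N-1},\,\eta>0}$ exhausts precisely the scalar, horizontal-translation, and dilation symmetries of $U_{-Te_N,1}$, so non-degeneracy reduces to non-degeneracy of the extremal itself (the appendix). In particular, $\hat\ell_0\geq\delta(R)>0$ outside any fixed neighborhood of $(1,0,1)$.

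For existence, \eqref{claim-limit} gives $\hat\ell_t(1,0,1)=o(1)$, so any minimizing sequence $(c_n,\tilde x_n,\tilde\varepsilon_n)$ drives $\hat\ell_t$ to $o(1)$. Expanding $\hat\ell_t=\|\nabla\hat v_t\|_{\hat g_t}^2-2c\langle\nabla\hat v_t,\nabla\hat V\rangle_{\hat g_t}+c^2\|\nabla\hat V\|_{\hat g_t}^2$ and noting that $\|\nabla\hat V_{\tilde x,\tilde\varepsilon,t}\|_{\hat g_t}$ is uniformly bounded above and below (it converges to $\|\nabla U\|_{L^2(\Rnp)}$ on compact parameter sets, and cutoff losses are controlled by $\rho_0/\lambda_t\to\infty$), we see $|c_n|$ stays bounded. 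In each non-compact regime ($|\tilde x|\to\infty$, $\tilde\varepsilon\to 0^+$, or $\tilde\varepsilon\to\infty$) the rescaled bubble shifts out, concentrates to a point, or disperses, so $\hat V\rightharpoonup 0$ weakly in $H^1$; thus the cross term vanishes in the limit and $\hat\ell_t$ is bounded below by $\|\nabla U_{-Te_N,1}\|_{L^2}^2-o(1)>0$. This pins the minimizing sequence in a fixed compact subset of the parameter space for $t$ small, and continuity yields a minimizer, which we label $(c_t,x(t)/\lambda_t,\varepsilon(t)/\lambda_t)$. The local-uniform convergence $\hat\ell_t\to\hat\ell_0$ and the uniqueness of the minimum of $\hat\ell_0$ at $(1,0,1)$ then force $c_t\to 1$, $\varepsilon(t)/\lambda_t\to 1$, and $x(t)/\lambda_t\to 0$, i.e.\ $|x(t)|=o(\lambda_t)$, giving \eqref{limit-123}.

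The principal obstacle is the uniform control of $\hat\ell_t$ at the non-compact ends of the parameter space, since those ends lie outside every fixed compact set where local-uniform convergence to $\hat\ell_0$ applies. One must argue there by hand, using that $\hat V_{\tilde x,\tilde\varepsilon,t}$ becomes weakly orthogonal (in $H^1$) to $\hat v_t$ under any of these degenerations, together with uniform-in-$t$ two-sided bounds on $\|\nabla\hat V_{\tilde x,\tilde\varepsilon,t}\|_{\hat g_t}$ that account for the cutoff $\varphi$ in the definition of $V_{x,\varepsilon,t}$.
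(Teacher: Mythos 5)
Your approach is a genuinely different route from the paper's, organized as a $\Gamma$-convergence-type argument on the rescaled functional $\hat\ell_t$. The paper instead proceeds in two decoupled steps: (i) existence of a minimizer for each fixed small $t$ is obtained directly by extracting a convergent subsequence from a minimizing sequence $\{C_kV_{x_k,\varepsilon_k,t}\}$ within the fixed parameter box $\mathbb{R}\times(B_{\rho_0}\cap\partial\Rnp)\times(0,1)$ — the only thing to rule out is $\varepsilon_k\to 0$, which is done by showing that a concentrating bubble goes to $0$ weakly and hence contributes no cross term, contradicting $l_t<\tfrac12\|\nabla v_t\|_{g_t}$; (ii) the asymptotics \eqref{limit-123} are derived afterwards via $\|\nabla V_{0,\lambda_t,t}-c_t\nabla V_{x(t),\varepsilon(t),t}\|_{g_t}\to 0$ together with a quantitative Bahri--Coron interaction estimate $\langle\nabla V_{0,\lambda_t,t},\nabla V_{x,\varepsilon,t}\rangle_{g_t}\le C\bigl[\min\{\tfrac{\lambda_t}{\varepsilon},\tfrac{\varepsilon}{\lambda_t},\tfrac{\lambda_t\varepsilon}{|x|^2}\}\bigr]^{N/2}$, which forces $\varepsilon(t)/\lambda_t$ and $|x(t)|/\lambda_t$ to stay bounded, after which a limit comparison pins them to $1$ and $0$.

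The gap you flag at the end of your proposal is real and is precisely the piece the paper imports. Your argument needs a uniform-in-$t$ lower bound for $\hat\ell_t$ at the boundary of arbitrarily large parameter boxes — i.e. at $\tilde\varepsilon\to 0^+$, $\tilde\varepsilon\to\infty$ (recall that $1/\lambda_t\to\infty$, so this is genuinely non-compact along the family), and $|\tilde x|\to\infty$ — and local-uniform convergence $\hat\ell_t\to\hat\ell_0$ says nothing about those ends. The "weak orthogonality in $H^1$" you invoke there is exactly the content of the interaction estimate, and it must be made quantitative and uniform in $t$, accounting for the cutoff $\varphi$ whose effect is \emph{not} negligible when $\tilde\varepsilon\sim 1/\lambda_t$ or $|\tilde x|\sim\rho_0/\lambda_t$. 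Without that, your step 4 is a heuristic, not a proof. Two smaller remarks: first, for the existence part at fixed $t$, you need not work on the rescaled box at all — the original parameter range $(0,1)\times B_{\rho_0}$ is precompact except for $\varepsilon\to 0$, and that is easier to exclude than the rescaled non-compact ends. Second, your invocation of the appendix's non-degeneracy theorem to get a positive-definite Hessian of $\hat\ell_0$ is unnecessary for the conclusion: the $\Gamma$-convergence argument only uses that $\hat\ell_0$ vanishes uniquely at $(1,0,1)$, which follows from the classification of extremals in Theorem \ref{thm:WangZhu}, not from the linearized non-degeneracy.
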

\begin{proof}
For fixed $t$, let $\{C_kV_{x_k,\varepsilon_k,t}\}$ be a minimizing sequence of $l_t$,
i.e.
\begin{align*}
    \|\nabla (v_t-C_kV_{x_k,\varepsilon_k,t})\|_{g_t}\to l_t, \text{ as } k \to +\infty.
\end{align*}
Then it holds 
\begin{align}\label{bd-CV}
    C_k\|\nabla V_{x_k,\varepsilon_k,t}\|_{g_t}\leq l_t+\|\nabla v_t\|_{g_t}+o(1).
\end{align}
Since $|x_k|\leq \rho_0$ and $0<\varepsilon_k<1$, it is easy to check that $\|\nabla V_{x_k,\varepsilon_k,t}\|_{g_t}$ is bounded from below by a positive constant which does not depend on $k\geq 1$ and small $t$. Then by \eqref{bd-CV}, $C_k$ is uniformly bound for all $k\geq 1$ and small $t$. Thus there is a subsequence of $\{C_k\}$ (still denote it as $\{C_k\}$) converging to $c_t\geq 0$.

Now we apply \eqref{claim-limit} to prove that $\varepsilon_k$ does not converge to $0$ as $k\to +\infty$. By \eqref{claim-limit}, we know that $l_t\to 0$ as $t\to 0$.
Then 
\begin{align*}
    -2C_k\langle\nabla v_t,\nabla  V_{x_k,\varepsilon_k,t}\rangle_{g_t}&= \|\nabla (v_t-C_k V_{x_k,\varepsilon_k,t})\|_{g_t}-\|\nabla v_t\|_{g_t}^2-C_k^2\|\nabla V_{x_k,\varepsilon_k,t}\|_{g_t}^2\\
    &\leq  l_t^2-\|\nabla v_t\|_{g_t}^2+o(1)
    \leq -\frac12\|\nabla v_t\|_{g_t}^2.
\end{align*}
If $\varepsilon_k\to 0$, then $V_{x_k,\varepsilon_k,t}\rightharpoonup 0$ weakly in $H_0^1(\Omega_0)$ as $k\to +\infty$, hence $\langle\nabla v_t,\nabla  V_{x_k,\varepsilon_k,t}\rangle_{g_t}=o(1)$ for $k\to +\infty$ and small $t$. It follows that $\|\nabla v_t\|_{g_t}=o(1)$, which contradicts \eqref{u-v-p}. Then there is a subsequence of $\{\varepsilon_k\}$ (still denote it as $\{\varepsilon_k\}$) converges to a positive constant $\varepsilon(t)$.

Since $|x_k|\leq \rho_0$, there is a subsequence (still denote it as $\{x_k\}$) converging to $x(t)\in B_{\rho_0}\cap\partial \mathbb{R}^{N}_+$. Since $C_k\to c_t$ and $\varepsilon_k\to \varepsilon(t)>0$ as $k\to +\infty$, it is easy to check that 
\begin{align*}
	\|\nabla (C_k V_{x_k,\varepsilon_k,t}-c_t V_{x(t),\varepsilon(t),t})\|_{g_t}\to 0, \text{ as }k\to +\infty,
\end{align*}
Then $l_t$ is attained by $c_t V_{x(t),\varepsilon(t),t}.$

At last, we prove \eqref{limit-123}. From \eqref{claim-limit}, we know that
\begin{align}\label{V-V}
\|\nabla V_{0,\lambda_t,t}-c_t\nabla V_{x(t),\varepsilon(t),t}\|_{g_t}\leq l_t+\|\nabla v_t-\nabla V_{0,\lambda_t,t}\|_{g_t}\to 0, \text{ as }t\to 0.
\end{align}
Then 
\begin{align}\label{limit-c-t}
	(\mu_{p}(\mathbb{R}^{N}_+))^{\frac{p}{p-2}}
=\lim_{t\to 0}\|\nabla V_{0,\lambda_t,t}\|_{g_t}^2=\lim_{t\to 0}\|c_t\nabla V_{x(t),\varepsilon(t),t}\|_{g_t}^2=	(\mu_{p}(\mathbb{R}^{N}_+))^{\frac{p}{p-2}}\lim_{t\to 0}c_t,
\end{align}
which implies that $\lim_{t\to 0}c_t=1.$ 

Again by \eqref{V-V} and \eqref{limit-c-t}, it holds
\begin{align}\label{<V,V>}
    c_t\langle\nabla V_{0,\lambda_t,t},\nabla V_{x(t),\varepsilon(t),t}\rangle_{g_t}\to (\mu_{p}(\mathbb{R}^{N}_+))^{\frac{p}{p-2}},\  \text{ as } t\to 0.
\end{align}
On the other hand, by similar calculations in \cite{bahri1989critical} or Appendix B of \cite{figalli2020on} and applying \eqref{u-bound}, we can show that for small t, 
\begin{align*}
    \langle\nabla V_{0,\lambda_t,t},\nabla V_{x(t),\varepsilon(t),t}\rangle_{g_t}\leq C(N,\partial\Omega_0)\Big[\min\{\frac{\lambda_t}{\varepsilon(t)},\frac{\varepsilon(t)}{\lambda_t},\frac{\lambda_t\varepsilon(t)}{|x(t)|^2}\}\Big]^{\frac{N}{2}}.
\end{align*}
Then combined with \eqref{<V,V>}, it implies that $\frac{\varepsilon(t)}{\lambda_t}$ is bounded from both above and below by two positive constants and $|x|=O(\lambda_t)$. 

Let $a=\lim_{t\to 0}\frac{\lambda_t}{\varepsilon(t)}$ and $w_o=\lim_{t\to 0}\frac{x(t)}{\lambda_t}$, then by \eqref{V-V} and changing variables, we have
\begin{align*}
	\int_{\mathbb{R}^{N}_+}|\nabla U(w)-a^{\frac{N}{2}}\nabla U(a(w-w_o))|^2 dw=0,
\end{align*}
which implies that $a=1$ and $w_o=0$. Thus we have \eqref{limit-123}.
\end{proof}
 According to Claim \ref{clm:inf}, we can write 
\begin{align*}
    v_t=c_tV_{x(t),\varepsilon(t),t}+w_t
\end{align*}
for some $w_t\in H^1_0(\Omega_0)$ with $\|\nabla w_t\|_{g_t}$ is small. Moreover
$\nabla w_t$ is orthogonal to the space
\begin{align}\label{orth-w}
    \text{span}\left\{\nabla V_{x(t),\varepsilon(t),t},\nabla \left(\frac{\partial V_{x(t),\varepsilon,t}}{\partial\varepsilon}\big|_{\varepsilon(t)}\right),\nabla \left(\frac{\partial V_{x,\varepsilon(t),t}}{\partial x_1}\big|_{x(t)}\right),\cdots, \nabla \left(\frac{\partial V_{x,\varepsilon(t),t}}{\partial x_{N-1}}\big|_{x(t)}\right)\right\}
\end{align}
in the inner product $\langle\cdot,\cdot\rangle_{g_t}$.

\bigskip
Step 4: In this step, we need to establish the second variation estimate.
\begin{claim}\label{clm:non-deg}
There exists $\eta_0>0$ such that if $t$ is sufficiently small, then 
\begin{align*}
    \|\nabla w_t\|_{g_t}^2\geq (p-1+\eta_0)\int_{\Omega_0}\delta_t^{-s} V_{x(t),\varepsilon(t),t}^{p-2}w_t^2dvol_{g_t}.
\end{align*}
\end{claim}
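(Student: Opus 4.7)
I will argue by contradiction via a blow-up, reducing the claim to a known non-degeneracy of the extremal $U$ of $\mu_p(\Rnp)$. Suppose the claim fails: then there is a sequence $t_k\to 0$ along which, writing $V_k:=V_{x(t_k),\varepsilon(t_k),t_k}$ and $w_k:=w_{t_k}$,
\[
\|\nabla w_k\|_{g_{t_k}}^2 \leq \Bigl(p-1+\tfrac{1}{k}\Bigr)\int_{\Omega_0}\delta_{t_k}^{-s}V_k^{p-2}w_k^2\,dvol_{g_{t_k}}.
\]
Normalize so that the right-hand integral equals $1$, which keeps $\|\nabla w_k\|_{g_{t_k}}$ bounded. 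Set $\varepsilon_k:=\varepsilon(t_k)$, $x_k:=x(t_k)$, $Q_k:=Q_{t_k}$, and blow up at scale $\varepsilon_k$ by defining
\[
\tilde w_k(y):=\varepsilon_k^{(N-2)/2}w_k\bigl(\Phi^{t_k}_{Q_k}(x_k+\varepsilon_k y)\bigr),\qquad y\in \hat\Omega_k:=\tfrac{1}{\varepsilon_k}\bigl((\Phi^{t_k}_{Q_k})^{-1}(\Omega_0)-x_k\bigr).
\]
Using Step 2, Claim \ref{clm:inf}, and the $C^2$ convergence $F_t\to\mathrm{id}$, one has $\hat\Omega_k\to\Rnp$, the pulled-back metric $\hat g_k$ tends to $g_E$ locally in $C^1$, and the rescaled profile of $V_k$ converges in $C^2_{\mathrm{loc}}$ to $U$. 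Change of variables turns the energy and the weighted integral into the corresponding quantities for $\tilde w_k$ on $(\hat\Omega_k,\hat g_k)$, so $\tilde w_k$ is bounded in $\mathcal{D}_0^{1,2}$ and admits a weak limit $\tilde w\in\mathcal{D}_0^{1,2}(\Rnp)$.

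Next I will check three things: (i) the orthogonality relations \eqref{orth-w} pass to the limit, giving $\tilde w\perp_{H^1}U$, $\tilde w\perp_{H^1}\partial_\varepsilon U_{0,\varepsilon}|_{\varepsilon=1}$, and $\tilde w\perp_{H^1}\partial_{x_i}U|_{x=0}$ for $i=1,\dots,N-1$; (ii) the inequality passes to the limit as $\int_{\Rnp}|\nabla\tilde w|^2\leq (p-1)\int_{\Rnp}x_N^{-s}U^{p-2}\tilde w^2$; and (iii) the weighted embedding $\mathcal{D}_0^{1,2}(\Rnp)\hookrightarrow L^2(\Rnp;\,x_N^{-s}U^{p-2}dx)$ is compact. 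Steps (i) and (ii) are routine weak-convergence arguments once the $C^2_{\mathrm{loc}}$ convergence of the reference profiles and metrics is in hand. For (iii), the decay estimate $U(x)\leq Cx_N(1+|x|)^{-N}$ from Lemma \ref{lem:U} makes the weight rapidly decaying at infinity, and Hardy's inequality handles the possibly singular behavior at $\{x_N=0\}$; a truncation plus Rellich argument then yields compactness. Together with the normalization, this forces $\int_{\Rnp}x_N^{-s}U^{p-2}\tilde w^2=1$, so $\tilde w\not\equiv 0$.

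Finally I will invoke the non-degeneracy of $U$ (the appendix result). In spectral terms, the weighted eigenvalue problem $-\Delta\varphi=\mu\,x_N^{-s}U^{p-2}\varphi$ on $\Rnp$ has first eigenvalue $\mu_1=1$ with eigenfunction $U$, second eigenvalue $\mu_2=p-1$ whose eigenspace is spanned exactly by the scaling mode $\partial_\varepsilon U_{0,\varepsilon}|_{\varepsilon=1}$ and the $N-1$ translation modes $\partial_{x_i}U|_{x=0}$, and $\mu_3>p-1$. Since $\tilde w$ is orthogonal in $H^1$ to all $N+1$ of these modes, its spectral decomposition lies in eigenspaces with eigenvalue $\geq\mu_3$, so $\int_{\Rnp}|\nabla\tilde w|^2\geq\mu_3\int_{\Rnp}x_N^{-s}U^{p-2}\tilde w^2>(p-1)\int_{\Rnp}x_N^{-s}U^{p-2}\tilde w^2$, contradicting (ii). The main obstacle is step (iii): proving the weighted-$L^2$ compactness on the full half-space is delicate because the weight $x_N^{-s}U^{p-2}$ may be singular at $\{x_N=0\}$ when $s$ is close to $2$; the key inputs are the sharp pointwise bound on $U$ from Lemma \ref{lem:U} and Hardy's inequality near the boundary.
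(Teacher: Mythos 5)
Your proposal is correct and is essentially the argument the paper intends: the paper omits the proof with the remark that it is ``standard'' and follows ``by contradiction using the non-degeneracy of $U$ in the Appendix,'' which is exactly your blow-up--plus--spectral decomposition. The two analytic ingredients you identify as the crux of the matter are both proved explicitly in the Appendix: the compact embedding $\mathcal{D}^{1,2}_0(\Rnp)\hookrightarrow L^2(\Rnp;\,x_N^{-s}U^{p-2}\,dx)$ (your step (iii)) and the spectral characterization that the second eigenvalue $p-1$ has multiplicity exactly $N$ with eigenspace spanned by the scaling and translation modes (Theorem~\ref{thm:non-deg}). The only small remark worth making is that the compactness in (iii) is not as delicate as you suggest: the paper's proof avoids Hardy-type boundary estimates entirely, instead splitting $\int_\Omega f^2\rho$ by H\"older against $\bigl(\int_\Omega x_N^{-s}U^p\bigr)^{1-2/p}\bigl(\int_\Omega x_N^{-s}f^p\bigr)^{2/p}$; since the first factor tends to zero on the complement of an exhausting sequence of compact sets (controlling simultaneously the tail at infinity and the potentially singular boundary layer), Rellich--Kondrakov on the compact pieces finishes the job. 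Your truncation $+$ Hardy $+$ Rellich route would also work, but the H\"older argument is cleaner and uniform in $s\in(0,2)$.
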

The proof by now is standard. One can argue it by contradiction using the non-degeneracy of $U$ in the Appendix. We omit the details.

\bigskip

Step 5: For the following, we will abbreviate $V_{x(t),\varepsilon(t),t}$ as $V_t$. Multiplying \eqref{vt-eqn} by  $w_t$, integrating by parts, using the orthogonal condition \eqref{orth-w}, one has 
\begin{align}
\begin{split}\label{wt2}
    \|\nabla w_t\|_{g_t}^2&=\int_{\Omega_0} \delta_t^{-s}[c_tV_{t}+w_t]^{p-1}w_tdvol_{g_t}.
\end{split}
\end{align}
Given any $\alpha>0$, recall an inequality $|(a+b)^\alpha-a^\alpha-\alpha a^{\alpha-1}b|\leq C(|b|^\alpha+|a|^{\alpha-2}\min\{a^2,b^2\})$ for any $a>0,a+b>0$ from \cite[p.\,3]{bahri1989critical}. We have
\begin{align}
    &\int_{\Omega_0} \delta_t^{-s}[c_tV_{t}+w_t]^{p-1}w_tdvol_{g_t}-\int_{\Omega_0}\delta_t^{-s}[c_t^{p-1}V_{t}^{p-1}w_t+(p-1)c_t^{p-2}V_t^{p-2}w_t^2 ]dvol_{g_t}\notag\\
    \leq& C\int_{\Omega_0}\delta_t^{-s}\left[|w_t|^p+(c_tV_t)^{p-3}\min\{(c_tV_t)^2,w_t^2\}|w_t|\right]dvol_{g_t}\label{cross-est}\\
    \leq &C\int_{\Omega_0}\delta_t^{-s}\left[|w_t|^p+(c_tV_t)^{p-3}|w_t|^3\right]dvol_{g_t}\leq C\|\nabla w_t\|_{g_t}^{\sigma}\notag,
\end{align}
where $\sigma=\min\{3,p\}$.
In the last step we have used H\"older's inequality and 
\begin{align*}
    \int_{\Omega_0}\delta_t^{-s}w_t^{p}dvol_{g_t}\leq C\left(\int_{\Omega_0}|\nabla w_t|^2_{g_t}dvol_{g_t}\right)^p.
\end{align*}
Such inequality holds if $t$ is sufficiently small since $g_t\to g_E$ as $t\to 0$. 

Now plugging in \eqref{cross-est} to \eqref{wt2}, one has
\begin{align}\label{nw=w2}
    \|\nabla w_t\|_{g_t}^2
    =\int_{\Omega_0}\delta_t^{-s}[c_t^{p-1}V_{t}^{p-1}w_t+(p-1)c_t^{p-2}V_t^{p-2}w_t^2 ]dvol_{g_t}+O(\|\nabla w_t\|_{g_t}^{\sigma}).
\end{align}
Since $c_t\to 1$ as $t\to 0$, using Claim \ref{clm:non-deg}
\begin{align}\label{wd}
    \|\nabla w_t\|_{g_t}^2\leq C(\eta_0)\int_{\Omega_0}\delta_t^{-s}V_t^{p-1}w_tdvol_{g_t}.
\end{align}

\begin{claim}\label{clm:Vw}
If $t$ is sufficiently small, then
\begin{align}\label{Vwest}
\int_{\Omega_0} \delta_t^{-s}V_t^{p-1}w_tdvol_{g_t}=O(\varepsilon^{\min\{N-2,\,2\}}).
\end{align}
\end{claim}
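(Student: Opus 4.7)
The strategy is to exploit the equation $-\Delta U_{x,\varepsilon}=x_N^{-s}U_{x,\varepsilon}^{p-1}$ satisfied by the bubble on $\mathbb{R}^N_+$, together with the orthogonality condition \eqref{orth-w}, so that the leading behaviour of $\delta_t^{-s}V_t^{p-1}$ is absorbed by $-\Delta_{g_t}V_t$ and only a residual error is paired against $w_t$. Since $\langle\nabla V_t,\nabla w_t\rangle_{g_t}=0$ and $V_t,w_t\in H_0^1(\Omega_0)$, integration by parts on $(\Omega_0,g_t)$ gives $\int_{\Omega_0}(-\Delta_{g_t}V_t)w_t\,dvol_{g_t}=0$, and therefore
\[\int_{\Omega_0}\delta_t^{-s}V_t^{p-1}w_t\,dvol_{g_t}=\int_{\Omega_0}\bigl[\delta_t^{-s}V_t^{p-1}+\Delta_{g_t}V_t\bigr]w_t\,dvol_{g_t}.\]
Everything that follows is an estimate of the right hand side.

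Working in the Fermi chart $\Phi_{Q_t}^t$, where $V_t=\varphi(|x|)U_{x(t),\varepsilon(t)}(x)$ and $\delta_t=x_N$, the equation for $U_{x(t),\varepsilon(t)}$ together with the metric expansion $\Delta_{g_t}=\Delta+2h_{ij}x_N\partial_{ij}-H\partial_N+\text{l.o.t.}$ from Lemma \ref{lem:exp-gE} decomposes the integrand into (i) cutoff terms $x_N^{-s}(\varphi^{p-1}-\varphi)U_{x(t),\varepsilon(t)}^{p-1}+2\nabla\varphi\cdot\nabla U_{x(t),\varepsilon(t)}+(\Delta\varphi)U_{x(t),\varepsilon(t)}$ supported in the annulus $\{\rho_0\le|x|\le 2\rho_0\}$; (ii) curvature terms $2h_{ij}x_N V_{t,ij}-HV_{t,N}$; and (iii) higher-order remainders from the $o(|x|)$ part of the expansion. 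For (i), the pointwise bounds in Lemma \ref{lem:U} give $|U_{x(t),\varepsilon(t)}|+|\nabla U_{x(t),\varepsilon(t)}|\le C\varepsilon^{N/2}\rho_0^{1-N}$ on the annulus, and then H\"older combined with the Hardy-Sobolev inequality $\|w_t\|_{L^p(\delta_t^{-s})}\le C\|\nabla w_t\|_{g_t}$ bounds its contribution by $C\varepsilon^{N/2}\|\nabla w_t\|_{g_t}$. For (ii) I would integrate by parts once in the Fermi coordinates to move a derivative from $V_t$ onto $w_t$, and then apply Cauchy-Schwarz to obtain, respectively, $C\|x_N\nabla V_t\|_{L^2}\|\nabla w_t\|_{L^2}$ and $C\|V_t\|_{L^2}\|\nabla w_t\|_{L^2}$ (plus harmless zero-order terms). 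Rescaling $x=\varepsilon y$ and using $|U(y)|\le Cy_N(1+|y|)^{-N}$ and $|\nabla U(y)|\le C(1+|y|)^{-N}$ from Lemma \ref{lem:U} shows $\|V_t\|_{L^2}^2$ and $\|x_N\nabla V_t\|_{L^2}^2$ are both $O(\varepsilon^2)$ for any $N\ge 3$, so (ii) contributes at most $C\varepsilon\|\nabla w_t\|_{g_t}$. Summing up,
\[\Big|\int_{\Omega_0}\delta_t^{-s}V_t^{p-1}w_t\,dvol_{g_t}\Big|\le C\varepsilon\|\nabla w_t\|_{g_t}.\]
Substituting this into \eqref{wd} gives $\|\nabla w_t\|_{g_t}^2\le C\varepsilon\|\nabla w_t\|_{g_t}$, hence $\|\nabla w_t\|_{g_t}\le C\varepsilon$, and plugging back yields $\int_{\Omega_0}\delta_t^{-s}V_t^{p-1}w_t\,dvol_{g_t}=O(\varepsilon^2)$, which implies \eqref{Vwest} in every dimension $N\ge 3$ (since $\min\{N-2,2\}\le 2$).

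The main obstacle will be the treatment of the higher-order remainders in (iii): they involve second derivatives of $V_t$ multiplied by coefficients that are only $o(|x|)$, and Lemma \ref{lem:U} provides no direct pointwise bound on $\nabla^2 U$. These can be handled either by integrating by parts to reduce to first-derivative quantities on $V_t$, or by using the equation $-\Delta U_{x,\varepsilon}=x_N^{-s}U_{x,\varepsilon}^{p-1}$ together with interior Schauder estimates away from $\{x_N=0\}$ to control $\nabla^2 U$. In either case the extra $o(1)$ factor from the $o(|x|)$ prefactor produces a net contribution of order $o(\varepsilon)\|\nabla w_t\|_{g_t}$, which is absorbed into the leading bound. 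A secondary technical point is that the Fermi chart is defined only on $B_{2\rho_0}^+$, so all of the manipulations above must be carried out inside the support of $V_t$; outside this set the integrand vanishes identically because of the cutoff $\varphi$.
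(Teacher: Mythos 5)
Your proposal is correct in its key mechanics and reaches the same estimate, but it travels a somewhat different route from the paper's. Both start identically: use the orthogonality condition \eqref{orth-w} to rewrite the integral as $\int_{\Omega_0}[\Delta_{g_t}V_t+\delta_t^{-s}V_t^{p-1}]\,w_t\,dvol_{g_t}$, then estimate the ``bubbling defect'' and bootstrap via \eqref{wd} to upgrade $O(\varepsilon)\|\nabla w_t\|_{g_t}$ to $O(\varepsilon^2)$. Where you diverge is in \emph{how} the defect is estimated. The paper applies H\"older once, pairing $L^{\frac{2N}{N+2}}$ of the defect against $L^{\frac{2N}{N-2}}$ of $w_t$ (controlled by Sobolev), and then bounds each of four pieces $\hat J_1,\dots,\hat J_4$ (cutoff Laplacian, cutoff gradient, equation error, cutoff power error) pointwise in $L^{\frac{2N}{N+2}}$. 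You instead isolate the explicit curvature part $2h_{ij}x_N V_{t,ij}-HV_{t,N}$, integrate by parts once to convert to $\langle x_N\nabla V_t,\nabla w_t\rangle$-type pairings, and use $L^2$ Cauchy--Schwarz plus $\|x_N\nabla V_t\|_{L^2}=O(\varepsilon)$, $\|V_t\|_{L^2}=O(\varepsilon)$. What your route buys is that it avoids any decay bound on $\nabla^2 U$: the paper's $\hat J_3$ estimate implicitly uses $|\nabla^2 U(z)|\lesssim(1+|z|)^{-N-1}$, which does not appear in Lemma \ref{lem:U} (this is a small unstated gap in the paper, though a standard elliptic estimate fills it). What you lose is uniformity: your treatment of the higher-order $o(|x|)$-coefficient remainder (iii) is left at the heuristic level, whereas the paper absorbs it into a single $L^{\frac{2N}{N+2}}$ computation on $\hat J_3$ with no further splitting. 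Your claim that (iii) can also be handled by a second integration by parts is plausible but would require care, since differentiating an $o(|x|)$ coefficient only yields $o(1)$, and one would need to track the scale $|x|\sim\varepsilon(1+|z|)$ on the support of $U_{x(t),\varepsilon(t)}$ to recover the missing factor of $\varepsilon$. These are fixable details rather than genuine gaps, and the final bound $O(\varepsilon^2)$ (which implies $O(\varepsilon^{\min\{N-2,2\}})$) matches the paper's.
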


\begin{proof}[Proof of Claim \ref{clm:Vw}]
Using the orthogonal condition \eqref{orth-w}, we have
\begin{align}
\begin{split}\label{Vw}
    \left|\int_{\Omega_0}\delta_t^{-s}V_t^{p-1}w_tdvol_{g_t}\right|&=\left|\int_{\Omega_0}(\Delta_{g_t}V_t+\delta_t^{-s}V_t^{p-1})w_tdvol_{g_t}\right|\\
    &\leq \left(\int_{\Omega_0}w_t^{\frac{2N}{N-2}}dvol_{g_t}\right)^{\frac{N-2}{2N}}\left(\int_{\Omega_0}|\Delta_{g_t}V_t+\delta_t^{-s}V_t^{p-1}|^{\frac{2N}{N+2}}dvol_{g_t}\right)^{\frac{N+2}{2N}}\\
    &\leq\left(\int_{\Omega_0}|\Delta_{g_t}V_t+\delta_t^{-s}V_t^{p-1}|^{\frac{2N}{N+2}}dvol_{g_t}\right)^{\frac{N+2}{2N}}\|\nabla w_t\|_{g_t},
\end{split}
\end{align}
where we have used Sobolev inequality in the last step. Observe that 
\begin{align*}
    &\quad \Delta_{g_t}V_{x,\varepsilon,t}+\delta_t^{-s}V_{x,\varepsilon,t}^{p-1}\\
    &=U_{x,\varepsilon}\Delta_{g_t}\varphi+2g_t(\nabla\varphi,\nabla U_{x,\varepsilon})+\varphi\Delta_{g_t}U_{x,\varepsilon}+\delta_t^{-s}V_{x,\varepsilon,t}^{p-1}\\
    &=U_{x,\varepsilon}\Delta_{g_t}\varphi+2g_t(\nabla\varphi,\nabla U_{x,\varepsilon})+\varphi[\Delta_{g_t}U_{x,\varepsilon}+\delta_t^{-s}U_{x,\varepsilon}^{p-1}]+\delta_t^{-s}[V_{x,\varepsilon,t}^{p-1}-\varphi U_{x,\varepsilon}^{p-1}]\\
    &=\hat J_1+\hat J_2+\hat J_3+\hat J_4.
\end{align*}
Now let us compute $\hat J_i$, $i=1,2,3,4$, respectively.
\begin{align*}
    \int_{\Omega_0}|\hat J_1|^{\frac{2N}{N+2}}dvol_{g_t}&\leq C(\rho_0)\int_{B_{2\rho_0}^+/B_{\rho_0}^+}U_{x,\varepsilon}^{\frac{2N}{N+2}}dvol_{g_E}\\
    &\leq 
    C\varepsilon^{-\frac{N(N-2)}{N+2}+N}\int_{B_{2\rho_0/ \varepsilon}^+\setminus B_{\rho_0/\varepsilon}^+}(1+|x|)^{\frac{2N(1-N)}{N+2}}dx\leq C\varepsilon^{\frac{N^2}{N+2}}.
\end{align*}
Similarly 
\begin{align*}
    \int_{\Omega_0}|\hat J_2|^{\frac{2N}{N+2}}dvol_{g_t}\leq C\int_{B_{2\rho_0}^+/B_{\rho_0}^+}|\nabla U_{x,\varepsilon}|^{\frac{2N}{N+2}}dvol_{g_t}\leq C\varepsilon^{\frac{N^2}{N+2}},\\
    \int_{\Omega_0}|\hat J_4|^{{\frac{2N}{N+2}}}dvol_{g_t}\leq C\int_{B_{2\rho_0}^+/B_{\rho_0}^+}|U_{x,\varepsilon}^{p-1}|^{\frac{2N}{N+2}}dvol_{g_t}\leq C\varepsilon^{\frac{N^2}{N+2}(p-1)}.
\end{align*}
Since $g_t(y)=g_E(y)+O(y)$ uniformly at $\Omega_0$ when $t\to 0$, then on the support of $\varphi$, 
\[\Delta_{g_t}U_{x,\varepsilon}+\delta_t^{-s}U_{x,\varepsilon}^{p-1}=O(|y||\nabla_y^2U_{x,\varepsilon}|+|\nabla_y U_{x,\varepsilon}|).\]
Consequently
\begin{align*}
\int_{\Omega_0}|\hat J_3|^{\frac{2N}{N+2}}dvol_{g_t}&\leq C\int_{B_{2\rho_0}^+}[(|y||\nabla_y^2U_{x,\varepsilon}|)^{\frac{2N}{N+2}}+|\nabla_yU_{x,\varepsilon}(y)|^{\frac{2N}{N+2}}]dvol_{g_t}\\
&\leq C\varepsilon^{\frac{2N}{N+2}}\int_{B_{2\rho_0/\varepsilon}^+}(1+|y|)^{-\frac{2N^2}{N+2}}dy\leq C\varepsilon^{\frac{2N}{N+2}}.
\end{align*}
Inserting the above four estimates to \eqref{Vw}, since $N\geq 3$, one has 
\begin{align*}
    \int_{\Omega_0}\delta_t^{-s}V_{t}^{p-1}w_tdvol_{g_t}=O(\varepsilon)\|\nabla w_t\|_{g_t}.
\end{align*}
Since we also have \eqref{wd}, then $\|\nabla w_t\|_{g_t}=O(\varepsilon)$ and consequently $\int_{\Omega_0}\delta_t^{-s}V_{t}^{p-1}w_tdvol_{g_t}=O(\varepsilon^2)$.
\end{proof}

\bigskip
Step 6: Define 
\begin{align*}
    J^t(u)=\frac{\int_{\Omega_0}|\nabla u|_{g_t}^2dvol_{g_t}}{(\int_{\Omega_0}\delta_t^{-s}|u|^pdvol_{g_t})^{2/p}} \quad \text{for } u\in H_0^1(\Omega_0).
\end{align*}
By our assumption, $v_t$ is a minimizer of $\inf_{H_0^1(\Omega_0)} J^t\leq \mu_p(\Rnp)$. However, we shall prove $J^t(v_t)>\mu_p(\Rnp)$. This is a contradiction. 

To compute $J(v_t)$, we first notice that  $\|\nabla v_t\|_{g_t}^2=c_t^2\|\nabla V_t\|_{g_t}^2+\|\nabla w_t\|^2_{g_t}$.
Second, using the inequality $|(a+b)^{\alpha}-a^\alpha-\alpha a^{\alpha-1}b-\frac12\alpha(\alpha-1)a^{\alpha-1}b|\leq C(|b|^3+|a|^{\alpha-3}\min\{a^2,b^2\})$,
\begin{align*}
    &\int_{\Omega_0}\delta_t^{-s}v_t^pdvol_{g_t}\\
    &=\int_{\Omega_0}\delta_t^{-s}\left[(c_tV_t)^p+p(c_tV_t)^{p-1}w_t+\frac12p(p-1)(c_tV_t)^{p-2}w_t^2\right] dvol_{g_t}+O(\|\nabla w_t\|_{g_t}^{\sigma})\\
    &=\int_{\Omega_0}\delta_t^{-s}(c_tV_t)^pdvol_{g_t}+\frac{p}{2}\int_{\Omega_0}\delta_t^{-s}(c_tV_t)^{p-1}w_tdvol_{g_t}+\frac{p}{2}\|\nabla w_t\|_{g_t}^2+O(\|\nabla w_t\|_{g_t}^{\sigma}),
\end{align*}
where $\sigma=\min\{p,3\}$. Here, we have used \eqref{nw=w2} in the second step. 
Inserting the expansion of $\|\nabla v_t\|_{g_t}^2$ and $\int_{\Omega_0}\delta_t^{-s}v_t^pdvol_{g_t}$ into $J^t(v_t)$, using Taylor's expansion, we have
\begin{align*}
    &J^t(v_t)=J^t(c_tV_t+w_t)\\
    &=J^t(V_t)\left[1+\frac{\|\nabla w_t\|_{g_t}^2}{\|c_t\nabla V_t\|_{g_t}^2}-(1+o(1))\frac{\int_{\Omega_0}\delta_t^{-s}(c_tV_t)^{p-1}w_tdvol_{g_t}+\|\nabla w_t\|_{g_t}^2}{\int_{\Omega_0}\delta_t^{-s}(c_tV_t)^pdvol_{g_t}}\right]+O(\|\nabla w_t\|_{g_t}^{\sigma}).
\end{align*}
Using \eqref{u-v-p}, one can see that 
\begin{align*}
    \lim_{t\to 0}\|c_t\nabla V_t\|_{g_t}^2=\lim_{t\to 0}\int_{\Omega_0}\delta_t^{-s}(c_tV_t)^pdvol_{g_t}=(\mu_p(\Rnp))^{\frac{p}{p-2}},
\end{align*}
then 
\begin{align}\label{exp-Jtvt}
    J^t(v_t)=J^t(V_t)-(\mu_p(\Rnp))^{-\frac{2}{p-2}}[1+o(1)]\int_{\Omega_0}\delta_t^{-s}V_t^{p-1}w_tdvol_{g_t}.
\end{align}
Recalling Lemma \ref{lem:F-1} and Lemma \ref{lem:F-2}, 
\begin{align}\label{JtVt}
J^t(V_t)=\mu_p(\Rnp)+\varepsilon (\mu_p(\Rnp))^{\frac{p}{p-2}}H_t(x(t))(I_1+\frac{2}{p}I_2)+o(\varepsilon),
\end{align}
where $H_t(x(t))$ is the mean curvature of  $\partial \Omega_0$ at $x(t)$ with respect to metric $g_t$.

Inserting \eqref{Vwest} and \eqref{JtVt} to \eqref{exp-Jtvt}, one obtains that 
\begin{align*}
    J^t(v_t)=\mu_p(\Rnp)+\varepsilon (\mu_p(\Rnp))^{\frac{p}{p-2}}H(x(t))(I_1+\frac{2}{p}I_2)+o(\varepsilon).
\end{align*}
Suppose $P(t)\to P_0\in \partial\Omega_0$, then $H_t(x(t))\to H(P_0)>0$, where $H(P_0)$ is the mean curvature of $\partial\Omega_0$ at $P_0$ with respect to the Euclidean metric. Since $v_t$ achieves the minimum of $J^t$, using Lemma \ref{lem:Poh}, then 
\begin{align*}
    \mu_p(\Omega_t)>\mu_p(\Rnp),
\end{align*}
when $t$ is sufficiently small. This contradicts Theorem \ref{thm:WangZhu2}.

This completes the whole proof.

\end{proof}


\appendix

\section{Non-degeneracy of the minimizer}

In this section, we assume that $U$ is the unique minimizer defined in Theorem \ref{thm:WangZhu}.

\begin{proof}[Proof of Lemma \ref{lem:U}]
(1) The cylindrical symmetry is obvious from $\tilde U$ and M\"obius transformation.

(2)  From \cite{dou2022divergent} and \cite{wang2022hardy}, we know that 
\begin{align*}
U(x)=&x_N V(x), \quad\text{ for } x\in \Rnp,\\
\tilde{U}(y)=&\frac{1-|y|^2}{2}\tilde{V}(y), \quad \text{ for } y\in \Bn,
\end{align*}
where $y=\mathcal{M}x$, $\tilde V\circ \mathcal{M}=V|\det(D\mathcal{M})|^{-\frac{1}{2}}$, and $\tilde{V}$ is the unique radial solution of
\begin{align*}
\begin{cases}
  \Delta \tilde V -\frac{4\nabla \tilde V\cdot y}{1-|y|^2} -\frac{2N \tilde V}{1-|y|^2}+\left(\frac{1-|y|^2}{2}\right)^{p-2-s}\tilde V^{p-1}=0,\,  \tilde V>0,\,& \text{ in }\Bn,\\
     \tilde  V=K, & \text{ on }\partial \Bn,
\end{cases}
\end{align*}
for only one positive constant $K$. 
Let $r=|y|$, and write $\tilde U(r)=\tilde U(y)$ and $\tilde V(r)=\tilde V(y)$ for simplicity. The regularity in \cite{dou2022divergent} shows that $\tilde V\in C^2[0,1)\cap C^{\alpha}[0,1]$ for some $\alpha\in (0,1)$. It is easy to see that $\tilde V$ is bounded in $[0,1]$,
then there exists $C>1$ such that $C^{-1}(1-r^2)\leq |\tilde U(r)|\leq C(1-r^2)$ for all $r\in [0,1]$. 
Using 
\begin{align*}
    1-r^2=\frac{4x_N}{|x+e_N|^2},
\end{align*}
and \begin{align*}
    U(x)=|\det(D\mathcal{M})|^{\frac{N-2}{2N}}\tilde U\circ \mathcal{M}(x)=\frac{2^{\frac{N-2}{2}}}{|x+e_N|^{N-2}}\tilde U(r),
\end{align*}
we obtain that  for $x\in \Rnp$,
\begin{align}\label{U-estimate}
C^{-1}x_N(|x|+1)^{-N}\leq |U(x)|\leq Cx_N(|x|+1)^{-N}.
\end{align}

 Next, we give the estimate of $|\nabla U|$. By $\tilde U(r)=\frac{1-r^2}{2}\tilde V(r)$ and $\tilde U(1)=0$, we have that $\tilde U'(1)=\lim_{r\to 1^-} \frac{-\tilde U(r)}{1-r}=-\tilde V(1)$. Then we obtain that $\tilde U\in C^1[0,1]$, which implies that $\tilde U'(r)$ is bounded in $[0,1]$, i.e. $|\nabla \tilde U|$ is bounded in $\overline{\Bn}$. By simple calculation, we have
\begin{align*}
    |\nabla U|^2=&\frac{2^{N-2}}{|x+e_N|^{2N}}\left[4|\nabla_y \tilde U|^2+(N-2)^2 \tilde U^2 |x+e_N|^2+4(N-2)\tilde U \nabla_y \tilde U \cdot (x+e_N)\right]\\
    =&\frac{2^{N}}{|x+e_N|^{2N}}|\nabla_y \tilde U|^2+\frac{(N-2)^2}{|x+e_N|^2}U^2+\frac{2^{\frac{N+2}{2}}}{|x+e_N|^{N+2}}U \nabla_y \tilde U \cdot (x+e_N).
\end{align*}
Then by \eqref{U-estimate} and the boundedness of $|\nabla \tilde U|$, we obtain that for $x\in \Rnp$,
$$|\nabla U(x)|\leq C(1+|x|)^{-N}.$$ 

(3) Using coordinates $y=(y',y_N)$ and $r=|y|$ on $\Bn$, elementary computations show that for $1\leq i\leq N-1$, 
\begin{align}
\begin{split}\label{M-dU}
    |\det(D\mathcal{M})|^{\frac{2-N}{2N}}\partial_{x_i}U&=\frac{y_i}{2r}\xi(r)\circ \mathcal{M},\\
    |\det(D\mathcal{M})|^{\frac{2-N}{2N}}\partial_{x_N}U&=\left[\frac{1+y_N}{2r}\xi(r)-\tilde U'(r)\frac{|y'|^2+(1+y_N)^2}{2r}\right]\circ \mathcal{M},\\
    |\det(D\mathcal{M})|^{\frac{2-N}{2N}}\frac{d}{d\varepsilon}\big|_{\varepsilon=1}U_\varepsilon&=\frac{y_N}{2r}\xi(r)\circ \mathcal{M}.
    \end{split}
\end{align}
where $\xi(r)=\tilde U'(r)(1-r^2)-(N-2)\tilde U(r)r$.  Since $\tilde U$ satisfies
\begin{align*}
\begin{cases}
   \tilde U''+\frac{N-1}{r} \tilde 
 U'+\frac{2^s  \tilde U^{p-1}}{(1-r^2)^s}=0,\,  \tilde U>0,\,\text{ in }(0,1),\\
     \tilde U'(0)=0, \tilde  U(1)=0,
\end{cases}
\end{align*}
we have that for for $r\in (0,1]$,
$$ \tilde U'(r)=-\frac{1}{r^{N-1}}\int_0^r\frac{2^st^{N-1}\tilde U(t)^{p-1}}{(1-t^2)^s}dt<0.$$
Thus $\xi(r)<0$ for $r\in (0,1)$.

Suppose that $U$ has a critical point $P\in\Rnp$. Consider $\mathcal{M}(P)=(y_1,\cdots, y_N)\in \Bn$. Using \eqref{M-dU}, we have $y_i=0$ for $1\leq i\leq N-1$. It suffices to consider the points $(0,y_N)\in \Bn$. Note that  
\begin{align*}
    U\circ\mathcal{M}^{-1}(0,y_N)=(\det(D\mathcal{M}))^{\frac{N-2}{2N}}\tilde U(r)=\left(\frac{(1+y_N)^2}{2}\right)^{\frac{N-2}{2}}\tilde U(r).
\end{align*}
Since $U\circ\mathcal{M}^{-1}$ is strictly increasing from $(-1,0)$, we only need to consider $y_N>0$, that is the critical points of $(1+r)^{N-2}\tilde U(r)$ on $[0,1)$. 

If $N=2$, then the only critical point is $r=0$. More precisely, $U\circ \mathcal{M}$ attains the global maximum only at the origin in $\Bn$. Equivalently, $U$ attains the global maximum only at the $(0,1)\in \mathbb{R}^2_+$.

If $N>2$, it is easy to see $(1+r)^{N-2}\tilde U(r)$ has only finitely many critical points in $[0,1)$. Equivalently, $U$ has finitely many critical points in $\Rnp$, and all of them lie on the interval $\{(0',x_N):0<x_N\leq 1\}$.



\end{proof}

Next, we prove the non-degeneracy of $U$ when $N\geq 3$, which is used the Step 4 in section \ref{sec:nearball}. We denote
$\rho=x_N^{-s}U^{p-2}$ and $L^2(\Omega, \rho dx)=L^2_\rho(\Omega)$.
\begin{align}\label{w-sob}
    \left(\int_{\Rnp}x_N^{-s}|u|^pdx\right)^{2/p}\leq C\int_{\Rnp}|\nabla u|^2dx,\quad \forall\, u\in \mathcal{D}^{1,2}_0(\Rnp). 
\end{align}
\begin{proposition}
    We have a compact embedding $\mathcal{D}^{1,2}_0(\Rnp)\hookrightarrow L^2_\rho(\Rnp)$.  
\end{proposition}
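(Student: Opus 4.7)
The plan is to show that every bounded sequence $\{u_n\} \subset \mathcal{D}^{1,2}_0(\Rnp)$ admits a subsequence converging strongly in $L^2_\rho(\Rnp)$. By weak compactness, we may pass to a subsequence (still denoted $u_n$) with $u_n \rightharpoonup u$ in $\mathcal{D}^{1,2}_0(\Rnp)$, so the task reduces to establishing $\int_{\Rnp} \rho(u_n-u)^2\,dx \to 0$. I would split $\Rnp$ into the compact piece $K_{R,\varepsilon} := \{x\in\Rnp : |x|\leq R,\ x_N\geq \varepsilon\}$ and its complement $E := \Rnp \setminus K_{R,\varepsilon}$, and treat each separately, then let $R\to\infty$ and $\varepsilon\to 0$.

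On $K_{R,\varepsilon}$, the function $U$ is smooth and $x_N\geq\varepsilon$, so $\rho=x_N^{-s}U^{p-2}$ is bounded by a constant depending on $R,\varepsilon$. Since $\mathcal{D}^{1,2}_0(\Rnp)$ embeds continuously into $H^1(K_{R,\varepsilon})$, the Rellich--Kondrachov theorem yields $u_n\to u$ strongly in $L^2(K_{R,\varepsilon})$, whence $\int_{K_{R,\varepsilon}} \rho(u_n-u)^2\,dx\to 0$. For the tail, Hölder's inequality with conjugate exponents $\tfrac{p}{p-2}$ and $\tfrac{p}{2}$ gives, for any $v\in\mathcal{D}^{1,2}_0(\Rnp)$,
\begin{equation*}
\int_E \rho\,v^2\,dx \leq \left(\int_E x_N^{-s}U^p\,dx\right)^{(p-2)/p}\left(\int_E x_N^{-s}|v|^p\,dx\right)^{2/p}.
\end{equation*}
The second factor is controlled by $C\|\nabla v\|_{L^2}^2$ by the Hardy--Sobolev inequality \eqref{w-sob}, hence uniformly by $CM^2$ where $M=\sup_n\|\nabla u_n\|_{L^2}$. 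For the first factor, since $U$ is an extremal of $\mu_p(\Rnp)$ one has $\int_{\Rnp} x_N^{-s}U^p\,dx<\infty$; moreover the pointwise bound $U(x)\leq Cx_N(1+|x|)^{-N}$ of Lemma \ref{lem:U} shows $x_N^{-s}U^p$ is integrable both near $\partial\Rnp$ (where it behaves like $x_N^{\,p-s}$ with $p-s=N(2-s)/(N-2)>0$) and at infinity, so $\int_E x_N^{-s}U^p\,dx\to 0$ as $R\to\infty$ and $\varepsilon\to 0$.

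Assembling: given $\eta>0$, choose $R,\varepsilon$ so that the tail factor forces $\int_E \rho\, u_n^2\,dx<\eta$ uniformly in $n$; extracting a further diagonal subsequence converging a.e., Fatou's lemma gives $\int_E \rho\,u^2\,dx\leq \eta$ as well, so $\int_E \rho(u_n-u)^2\,dx<4\eta$. Combined with strong convergence on $K_{R,\varepsilon}$, this yields $\limsup_n \int_{\Rnp} \rho(u_n-u)^2\,dx \leq 4\eta$, and letting $\eta\to 0$ completes the proof. The only real subtlety is obtaining a uniform tail bound under minimal information about $U$; this is precisely where the Hardy--Sobolev inequality \eqref{w-sob} and the $L^p(x_N^{-s}dx)$-integrability of $U$ are essential, while the rest is a standard truncation/Rellich argument.
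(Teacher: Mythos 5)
Your proof is correct and follows essentially the same route as the paper: continuity of the embedding via Hölder plus the weighted Sobolev inequality \eqref{w-sob}, and compactness by splitting $\Rnp$ into a compact piece away from $\partial\Rnp$ (where Rellich--Kondrachov applies and $\rho$ is bounded) and a tail controlled by the finiteness of $\int x_N^{-s}U^p$. The paper's write-up is marginally more streamlined — it applies the Hölder tail estimate directly to $f_j-f$ rather than to $u_n$ and $u$ separately with a Fatou argument, and phrases the compact exhaustion abstractly as $\{\Omega_k\}$ rather than as explicit truncations $K_{R,\varepsilon}$ — but these are cosmetic differences; the key ingredients and the logical structure match.
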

\begin{proof}
    For any Borel set $\Omega\subset\Rnp$ and $f\in \mathcal{D}^{1,2}_0(\Rnp)$, using H\"older's inequality,
    \begin{align}
        \int_{\Omega}f^2\rho dx\leq \left(\int_{\Omega}x_N^{-s}U^pdx\right)^{1-\frac{2}{p}}\left(\int_{\Omega}x_N^{-s}f^pdx\right)^{\frac{2}{p}}\leq C \int_{\Rnp}|\nabla f|^2dx.
    \end{align}
   Thus the embedding is continuous. Suppose $\{\Omega_k\}_{k=1}^\infty$ is a sequence of smooth compact enlarging domains exhausting $\Rnp$. Applying the Rellich-Kondrakov theorem, one has a compact embedding as the following.
   \begin{align}
   \mathcal{D}^{1,2}_0(\Rnp)\hookrightarrow L^2(\Omega_k,dx)\hookrightarrow  L^2_\rho(\Omega_k).
   \end{align}
   Now fix any bounded sequence $\{f_j\}_{j=1}^\infty\subset \mathcal{D}^{1,2}_0(\Rnp)$. Going to a subsequence if necessary, thanks to the above compact embedding, by a diagonal argument we can find $f\in \mathcal{D}^{1,2}_0(\Rnp)$ such that $f_j\to f$ strongly in $L^2_\rho(\Omega_k)$ for each $k$ as $j\to \infty$. We want to prove $f_j\to f$ strongly in $L^2_\rho(\Rnp)$. Using H\"older's inequality and \eqref{w-sob}
   \begin{align*}
       \limsup_{j\to \infty}\|f_j-f\|_{L^2_\rho(\Rnp)}&=\limsup_{j\to \infty}(\|f_j-f\|_{L^2_\rho(\Omega_k)}+\|f_j-f\|_{L^2_\rho(\Rnp\setminus \Omega_k)})\\
       &\leq \limsup_{j\to\infty} \left(\int_{\Rnp\setminus\Omega_k}x_N^{-s}U^pdx\right)^{1-\frac{2}{p}}\left(\int_{\Rnp\setminus\Omega_k}x_N^{-s}|f_j-f|^pdx\right)^{\frac{2}{p}}\\
       &\leq C\left(\int_{\Rnp\setminus\Omega_k}x_N^{-s}U^pdx\right)^{1-\frac{2}{p}}\sup_j \int_{\Rnp}|\nabla f_j-\nabla f|^2dx.
   \end{align*}
   Now letting $k$ go to infinity, we have the desired convergence.
\end{proof}

\begin{proposition}
   There exists a compact self-adjoint operator  $(-\rho^{-1}\Delta)^{-1}$ from $L^2_\rho(\Rnp)$ to itself.  
\end{proposition}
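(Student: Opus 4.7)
The plan is to construct the inverse as a Riesz-representation solution operator. Given $f\in L^2_\rho(\Rnp)$, the map
\[
v\longmapsto \int_{\Rnp}\rho f\,v\,dx
\]
is well defined and bounded on $\mathcal{D}^{1,2}_0(\Rnp)$, because by Cauchy--Schwarz in $L^2_\rho$ and the continuous embedding $\mathcal{D}^{1,2}_0(\Rnp)\hookrightarrow L^2_\rho(\Rnp)$ just established, one has
\[
\left|\int_{\Rnp}\rho f\,v\,dx\right|\leq \|f\|_{L^2_\rho}\|v\|_{L^2_\rho}\leq C\|f\|_{L^2_\rho}\Big(\int_{\Rnp}|\nabla v|^2\,dx\Big)^{1/2}.
\]
By Riesz representation applied to the Hilbert space $\mathcal{D}^{1,2}_0(\Rnp)$ with inner product $\int \nabla u\cdot\nabla v\,dx$, there is a unique $u\in\mathcal{D}^{1,2}_0(\Rnp)$ with
\[
\int_{\Rnp}\nabla u\cdot\nabla v\,dx=\int_{\Rnp}\rho f\,v\,dx,\qquad \forall\,v\in\mathcal{D}^{1,2}_0(\Rnp),
\]
which is exactly the weak formulation of $-\Delta u=\rho f$ in $\Rnp$ with zero Dirichlet trace. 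Define $Tf:=u$; this is the candidate for $(-\rho^{-1}\Delta)^{-1}$.

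Next I would show $T:L^2_\rho(\Rnp)\to L^2_\rho(\Rnp)$ is compact. Testing the weak equation with $v=u=Tf$ gives
\[
\int_{\Rnp}|\nabla Tf|^2\,dx=\int_{\Rnp}\rho f\,(Tf)\,dx\leq \|f\|_{L^2_\rho}\|Tf\|_{L^2_\rho}\leq C\|f\|_{L^2_\rho}\Big(\int_{\Rnp}|\nabla Tf|^2\,dx\Big)^{1/2},
\]
so $T$ is bounded as a map $L^2_\rho\to\mathcal{D}^{1,2}_0$. Composing with the compact embedding $\mathcal{D}^{1,2}_0(\Rnp)\hookrightarrow L^2_\rho(\Rnp)$ proved just above yields compactness of $T$ as an operator on $L^2_\rho(\Rnp)$.

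Finally, self-adjointness follows by symmetry of the bilinear form. For any $f,g\in L^2_\rho(\Rnp)$, plugging $v=Tg$ into the defining identity for $Tf$ and vice versa gives
\[
\langle Tf,g\rangle_{L^2_\rho}=\int_{\Rnp}\rho g\,(Tf)\,dx=\int_{\Rnp}\nabla(Tg)\cdot\nabla(Tf)\,dx=\int_{\Rnp}\rho f\,(Tg)\,dx=\langle f,Tg\rangle_{L^2_\rho},
\]
so $T=T^*$. The only step that needs any genuine care is verifying that the right-hand functional is bounded on $\mathcal{D}^{1,2}_0(\Rnp)$, i.e.\ that $\int\rho f v\,dx$ makes sense and is controlled by $\|\nabla v\|_{L^2}$; this is precisely where the continuous embedding $\mathcal{D}^{1,2}_0(\Rnp)\hookrightarrow L^2_\rho(\Rnp)$ (a consequence of \eqref{w-sob} and H\"older) is used, and everything else is a routine application of Riesz representation together with the compact embedding proved in the previous proposition.
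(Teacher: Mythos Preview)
Your proof is correct and follows essentially the same approach as the paper: bound the linear functional $v\mapsto\int\rho f\,v$ on $\mathcal{D}^{1,2}_0(\Rnp)$ via the continuous embedding, apply Riesz representation to define $T$, and then compose the boundedness $T:L^2_\rho\to\mathcal{D}^{1,2}_0$ with the compact embedding from the previous proposition. You actually supply the symmetry computation for self-adjointness explicitly, which the paper's proof leaves implicit.
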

\begin{proof}
    For any $f\in L^2_\rho(\Rnp)$ and $\phi\in \mathcal{D}^{1,2}_0(\Rnp)$, one has 
    \begin{align*}
        \int_{\Rnp}f\phi \rho dx\leq \|f\|_{L^2_\rho(\Rnp)}\|\phi\|_{L^2_\rho(\Rnp)}\leq C\|f\|_{L^2_\rho(\Rnp)}\|\phi\|_{\mathcal{D}^{1,2}_0(\Rnp)}.
    \end{align*}
Applying Riesz theorem, we have $T(f)\in \mathcal{D}^{1,2}_0(\Rnp)$ such that 
\begin{align*}
    \int_{\Rnp}f\phi \rho dx=\int_{\Rnp}\nabla T(f)\cdot\nabla \phi dx=\int_{\Rnp}-\Delta T(f)\phi.
\end{align*}
This is equivalent to say $-\rho^{-1}\Delta T(f)=f$. It is easy to see $T:L^2_\rho(\Rnp)\to \mathcal{D}^{1,2}_0(\Rnp)$ is continuous. By the previous proposition, $T:L^2_\rho(\Rnp)\to L^2_\rho(\Rnp) $ is compact.
\end{proof}

The previous two propositions imply that the spectrum of $(-\rho^{-1}\Delta)^{-1}$ is discrete. Denote the eigenvalues of   $-\rho^{-1}\Delta$ by $\lambda_i$ as the following  
\begin{align}
    \lambda_1<\lambda_2< \lambda_3< \cdots \to +\infty.
\end{align}

The first eigenvalue is $\lambda_1=1$ and eigenspace is $\text{span}\{U\}$. This follows from that $U$ satisfies $-\Delta U=\rho U$ and it is positive in $\Rnp$.
\begin{theorem}\label{thm:non-deg}
    The second eigenvalue is $\lambda_2=p-1$, which has multiplicity $N$. The corresponding eigenfunctions consist of 
\begin{align*}
    \text{span}\left\{\partial_{x_1}U,\cdots,\partial_{x_{N-1}}U,\frac{d}{d\varepsilon}\big|_{\varepsilon=1}U_\varepsilon\right\}.
\end{align*}
\end{theorem}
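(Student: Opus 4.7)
The plan is to (i) verify directly that the $N$ listed functions are eigenfunctions with eigenvalue $p-1$; (ii) show they are linearly independent; (iii) establish the lower bound $\lambda_2 \geq p-1$ via a second variation argument; and (iv) pin down the exact multiplicity by transferring the problem to $\Bn$ and decomposing into spherical harmonics.

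For step (i), differentiating $-\Delta U = x_N^{-s}U^{p-1}$ in $x_i$ for $1\leq i\leq N-1$ commutes with the weight (which depends only on $x_N$), yielding $-\Delta(\partial_{x_i}U) = (p-1)\rho\,\partial_{x_i}U$. For the dilation, one first checks that $U_\varepsilon(x) = \varepsilon^{1-N/2}U(x/\varepsilon)$ solves the same equation---this uses the scaling identity $(p-2)(N-2)/2 = 2-s$ which is immediate from $p=2(N-s)/(N-2)$---so differentiating in $\varepsilon$ at $\varepsilon=1$ gives $\Lambda:=(1-N/2)U - x\cdot\nabla U$ satisfying $-\Delta\Lambda = (p-1)\rho\,\Lambda$. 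The decay bounds in Lemma~\ref{lem:U}(2) ensure these functions lie in $\mathcal{D}^{1,2}_0(\Rnp)$. For step (ii), \eqref{M-dU} shows the M\"obius images on $\Bn$ are $y_i\,\xi(r)/(2r)$ for $i=1,\ldots,N$, which are linearly independent since $\xi\not\equiv 0$ on $(0,1)$ and $\{y_i\}$ form a basis of the degree-one spherical harmonics on $S^{N-1}$.

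For step (iii), the minimality of $U$ in the Rayleigh quotient defining $\mu_p(\Rnp)$ gives the second variation inequality
\begin{equation*}
\int_{\Rnp}|\nabla\phi|^2\,dx \;\geq\; (p-1)\int_{\Rnp}\rho\,\phi^2\,dx
\end{equation*}
for every $\phi\in\mathcal{D}^{1,2}_0(\Rnp)$ satisfying $\int_{\Rnp}\rho\,U\phi\,dx = 0$. Since $\lambda_1=1$ is simple with positive ground state $U$, any eigenfunction with $\lambda\neq 1$ is $L^2_\rho$-orthogonal to $U$ by self-adjointness, so the above inequality forces $\lambda\geq p-1$. Combined with step (i), this yields $\lambda_2 = p-1$.

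For step (iv), I would transfer the eigenvalue problem $-\Delta v = \lambda\rho v$ to $\Bn$ via the M\"obius map $\mathcal{M}$. Using \eqref{Lap_change}, the relation $x_N = (1-|y|^2)/(2|\mathcal{M}'|)$ from \eqref{Mo}, and the exponent identity $(N-2)(p-1)/2 - (N+2)/2 = -s$, the problem becomes, under $\tilde v\circ\mathcal{M} = |\mathcal{M}'|^{(2-N)/2} v$,
\begin{equation*}
-\Delta \tilde v = \lambda\, w(|y|)\,\tilde v \ \text{ on } \Bn, \qquad w(r):= 2^s(1-r^2)^{-s}\tilde U(r)^{p-2},
\end{equation*}
with Dirichlet boundary. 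Since $w$ is radial, expanding $\tilde v = \sum_{k\geq 0} v_k(r)Y_k(\theta)$ in spherical harmonics decouples the problem into Sturm-Liouville problems
\begin{equation*}
\mathcal{L}_k v_k := -v_k'' - \tfrac{N-1}{r}v_k' + \tfrac{k(k+N-2)}{r^2}v_k = \lambda\,w(r)\,v_k, \quad v_k'(0)=0,\ v_k(1)=0.
\end{equation*}
Let $\mu_k^{(1)}\leq\mu_k^{(2)}\leq\cdots$ be the eigenvalues of $\mathcal{L}_k$. Strict monotonicity of $k(k+N-2)$ in $k$ gives $\mu_k^{(1)} > \mu_1^{(1)}$ for $k\geq 2$; in the $k=1$ sector, the explicit function $\xi(r)/(2r)$ is sign-definite on $(0,1)$ (since $\xi<0$ there, as established in the proof of Lemma~\ref{lem:U}(3)), so it is a ground state, making $\mu_1^{(1)}=p-1$ simple. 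Thus $p-1$ is an eigenvalue of $\mathcal{L}_k$ only for $k=1$, contributing exactly $N = \dim\{Y_1\}$ eigenfunctions.

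The main obstacle is ruling out $p-1$ as a higher eigenvalue of $\mathcal{L}_0$ (the radial sector on $\Bn$): step (iii) gives only $\mu_0^{(j)}\geq p-1$ for $j\geq 2$. To promote this to strict inequality, I would invoke the uniqueness of the radial positive solution $\tilde U$ established in Theorem~\ref{thm:WangZhu}: a radial eigenfunction at $\lambda=p-1$ would, via the implicit function theorem applied to the nonlinear equation restricted to the radial class, produce a nontrivial one-parameter family of radial positive solutions near $\tilde U$, contradicting uniqueness. This shows that the $\lambda = p-1$ eigenspace has dimension exactly $N$, spanned by the claimed functions.
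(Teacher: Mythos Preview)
Your overall strategy---second-variation lower bound, transfer to the ball via the M\"obius map, spherical-harmonic decomposition into the radial Sturm--Liouville problems $\mathcal{L}_k$---matches the paper's approach, and your treatments of the sectors $k=1$ and $k\geq 2$ are correct. (The paper handles $k=1$ by an explicit reduction-of-order ODE computation, writing any solution as $c(r)\xi(r)$ and showing $c$ must be constant; your variational ``sign-definite eigenfunction is the ground state, hence simple'' argument is an equally valid alternative. For $k\geq 2$ the paper uses the same monotonicity you invoke, phrased as positivity of the operator $A_k=A_1+(\mu_k-\mu_1)r^{-2}$.)

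The genuine gap is your $k=0$ step. You claim that a nontrivial radial kernel element at $\lambda=p-1$ would, via the implicit function theorem applied to the radial nonlinear problem, produce a one-parameter family of radial positive solutions near $\tilde U$, contradicting the uniqueness in Theorem~\ref{thm:WangZhu}. This runs the implicit function theorem backwards. Non-degeneracy of the linearization implies local uniqueness, but degeneracy does \emph{not} manufacture nearby solutions: a smooth map can have an isolated zero with vanishing derivative (think of $a\mapsto a^{3}$). Concretely, parametrizing radial solutions by their value $a$ at the origin and setting $F(a)=\tilde U_a(1)$, a radial kernel element forces $F'(a_0)=0$, yet $a_0$ can still be the unique zero of $F$. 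So the uniqueness of $\tilde U$ is entirely compatible with a radial kernel at $p-1$, and your argument does not close. The paper instead attacks $k=0$ directly: any radial eigenfunction $\phi_0$ at eigenvalue $p-1$ is $L^2_{\tilde\rho}$-orthogonal to the positive function $\tilde U$ (since $p-1\neq 1$), hence must change sign, and the paper then derives a contradiction from a sign/maximum-principle consideration. You need some direct mechanism of this kind rather than an appeal to nonlinear uniqueness.
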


Before proving this theorem, let us establish an equivalent formulation in $\Bn$. For any $f\in L^2_\rho (\Rnp)$, we define $\tilde f$ on $\Bn$ by $\tilde f\circ \mathcal{M}=f|\det(D\mathcal{M})|^{\frac{2-N}{2N}}$, where $\mathcal{M}$ is the M\"obius transformation \eqref{Mo}. Denote 
$$\tilde \rho=\left(\frac{2}{1-|y|^2}\right)^s\tilde u^{p-2}.$$ 
It is easy to see that 
 \begin{align*}
     \tilde \rho\circ \mathcal{M}=
     \rho |\det(D\mathcal{M})|^{-2/N}
 \end{align*}
 and 
 \begin{align*}
     \int_{\Bn}\tilde f^2\tilde \rho dy=\int_{\Rnp}f^2 \rho |\det(D\mathcal{M})|^{\frac{2-N}{N}-\frac{2}{N}+1}dx=\int_{\Rnp} f^2\rho dx.
 \end{align*}
Therefore $L^2(\Rnp,\rho dx)$ is isometric to $L^2(\Bn,\tilde \rho dy)$.
Moreover, 
\begin{align}\label{lapf-equ}
-\Delta_{\Rnp}f=\rho g \text{ is equivalent to }-\Delta_{\Bn} \tilde f=\tilde \rho  \tilde g.
\end{align}
Therefore, it is equivalent to studying the spectrum of $(-\tilde \rho^{-1} \Delta_{\Bn})^{-1}:L^2_{\tilde \rho}(\Bn)\to L^2_{\tilde\rho}(\Bn)$. 
\begin{proof}[Proof of Theorem \ref{thm:non-deg}] 
    Define 
    \begin{align*}
        J(u)=\frac{\int_{\Rnp}|\nabla u|^2dx}{(\int_{\Rnp}x_N^{-s}|u|^p)^{2/p}}.
    \end{align*}
    Theorem \ref{thm:WangZhu} says that $J(u)\geq J(U)$ for any $u\in \mathcal{D}^{1,2}_0(\Rnp)$. The first variation near $U$ implies that $-\Delta U=x_N^{-s}U^{p-1}=\rho U$. The second variation $J''(u)(f,f)\geq 0$ implies that 
    \begin{align*}
        \int_{\Rnp}|\nabla f|^2dx\geq (p-1)\int_{\Rnp}f^2\rho dx
    \end{align*}
    for any $f\in H^1_0(\Rnp)$ satisfies $\int_{\Rnp} fU\rho dx=0$. Thus $\lambda_2\geq p-1$. Obviously any function in the set of $\text{span}\{\partial_{x_1}U,\cdots,\partial_{x_{N-1}}U,\frac{d}{d\varepsilon}\big|_{\varepsilon=1}U_\varepsilon\}$ satisfies $-\Delta f=(p-1)\rho f$. Thus $\lambda_2=p-1$.

    It suffices to prove all the eigenfunctions corresponding to $p-1$ belong to this set. We will prove this fact for $(-\tilde \rho \Delta_{\Bn})^{-1}$. The proof is inspired by \cite[Lemma 4.1]{ambrosetti2006perturbation}. Recalling \eqref{M-dU} and \eqref{lapf-equ}, one has $\{\xi(r)r^{-1}y_i\}_{i=1}^N$ satisfy the equation 
$$-\Delta \tilde f= (p-1)\tilde \rho \tilde f.$$

Let $\Delta_r$, resp. $\Delta_{\mathbb{S}^{N-1}}$ denote the Laplace operator in radial coordinates, resp. the Laplace-Beltrami operator on $\mathbb{S}^{N-1}$. Consider the spherical harmonics $\{Y_{k,i}(\theta):k=0,1,\cdots, 1\leq i\leq N_k\}$ satisfying
    \begin{align*}
        -\Delta_{\mathbb{S}^{N-1}}Y_{k,i}=\mu_kY_{k,i}
    \end{align*}
and recall that this equation has a sequence of eigenvalues 
$$\mu_k = k(k + n - 2),\quad  k=0, 1, 2,\cdots $$
whose multiplicity is given by $N_k$. In particular, one has that $\mu_0=0$ has multiplicity 1,
and $\mu_1=N-1$ has multiplicity $N$. Suppose $Y_{k,i}$ are normalized so that they form a complete orthonormal basis of $L^2(\mathbb{S}^{N-1})$. For any $\tilde f\in L^2_{\tilde \rho}(\Bn)$ can be expressed as 
\begin{align*}
\tilde f=\sum_{k=0}^\infty\sum_{i=1}^{N_k} \phi_{k,i}(r)Y_{k,i}(\theta)\text{ where }\phi_{k,i}(r)=\int_{\mathbb{S}^{N-1}}\tilde f(r,\theta)Y_{k,i}(\theta)d\theta\in L^2([0,1],\tilde \rho(r)dr).
\end{align*}
Suppose that $\tilde f$ satisfies $\Delta\tilde f+(p-1)\tilde \rho \tilde f=0$, then it is equivalent to the following equations of $\phi_{k,i}$
\begin{align}
\begin{cases}\label{Akphik}
    A_k(\phi_{k,i}):= \phi_{k,i}''+\frac{N-1}{r}\phi_{k,i}'-\frac{\mu_k}{r^2}\phi_{k,i}+(p-1)\tilde \rho \phi_{k,i}=0\\
    \phi_{k,i}(1)=0, \phi'_{k,i}(0)=0
    \end{cases}
\end{align}
for $k=0,1,2,\cdots$, $i=1,\cdots, N_k$. We shall show that $\phi_{1,i}\in \text{span}\{\xi(r)\}$ and $\phi_{k,i}=0$ for any $k\neq 1$.

Step 1: Consider the case $k=0$.  Multiplying $A_0(\phi_0)=0$ by $\tilde U$ and applying integration by parts, one has $\int_{\Bn}\tilde U\phi_0\tilde \rho dx=0$. Since $\tilde U$ is positive in $\Bn$, then $\phi_0$ must changes sign. However, $A_0(\phi_0)=0$ implies $\Delta \phi_0<0$. By maximum principle and $\phi_0=0$ on the boundary, one has $\phi_0>0$ in $\Bn$. This is a contradiction unless $\phi_0=0$.

Step 2: Consider the case $k=1$. First, it is easy to see $\xi(r)$ satisfies \eqref{Akphik} for $k=1$.
Suppose there exists another solution $\phi(r)$ of $A_1(\phi)=0$. Since $\xi(r)<0$, then we can write $\phi(r)=c(r)\xi(r)$. By a straightforward calculation, one has 
\begin{align*}
    c''\xi+2c'\xi'+\frac{N-1}{r}c'\xi=0. 
\end{align*}
If $c(r)$ is  not constant, it follows that
\begin{align*}
    \frac{c''}{c'}=\frac{-2\xi'}{\xi}-\frac{N-1}{r}
\end{align*}
and hence $c'\sim 1/(r^{N-1}\xi^2)$ as $r\to 0$. Note that $\xi\sim r$ as $r\to 0$. Thus $c\sim r^{-N}$ as $r\to 0$. However, $c(r)\xi(r)$ does not belong to  $L^2(\Bn,\tilde \rho dx)$ for $N\geq 2$. Therefore $c$ must be a constant and $\phi\in \text{span}\{\xi\}$. 

Step 3: Consider the case $k\geq 2$. Note that $A_1$ has a solution $\xi(r)$ which does not change sign in $(0,1)$. By Sturm theorem, $A_1$ is a non-negative operator. Since $\mu_k>\mu_1$ if $k\geq 2$ and 
\begin{align*}
    A_k=A_1+\frac{\mu_k-\mu_1}{r^2},
\end{align*}
then $A_k$ is a positive operator for $k\geq 2$. Thus $A_k(\phi)=0$ implies $\phi=0$. Combining the above analysis, we have that if $\tilde f$ satisfies $\Delta \tilde f+(p-1)\tilde \rho \tilde f=0$, then 
$$\tilde f\in \text{span}\{\xi(r)Y_{1,1}(\theta),\cdots, \xi(r)Y_{1,N}(\theta)\}.$$ 
This completes the proof.
\end{proof}



\small
\bibliographystyle{plainnat}
\bibliography{ref.bib}

\end{document}